\theoremstyle{plain}				% normaler Stil
\newtheorem  {theorem}                  {Theorem}       [section]
\newtheorem  {lemma}        [theorem]   {Lemma}
\newtheorem  {corollary}     [theorem]   {Corollary}
\newtheorem  {proposition}  [theorem]   {Proposition}
\theoremstyle{definition}			% Definitionsstil
\newtheorem  {definition}   [theorem]   {Definition}
\newtheorem  {example}     [theorem]   {Example}
\newtheorem  {remark}    [theorem]   {Remark}
  \numberwithin{equation}{section}	% Gleichungsnummern in Section
  \numberwithin{figure}{section}	% Abbildungsnummern in Section
\newcommand{\notion}[1]{\emph{#1}} % New notation
\newcommand{\notionOI}[1]{\emph{#1}} % New notation
\newcommand{\notiono}[1]{\emph{#1}} % New notation
\newcommand{\wo}{\backslash}
\newcommand{\borel}{\mathcal{B}} % Borel Menge
\newcommand{\real}{\mathbb{R}} % Reelle Zahlen
\newcommand{\reald}{\mathbb{R}^d} % Reelle Zahlen in Dimension d
\newcommand{\realdc}{\bar{\mathbb{R}}^d} % Reelle Zahlen in Dimension d
\newcommand{\realdcon}{\realdc \wo \{0\}} % Reelle Zahlen in Dimension d ohne 0
\newcommand{\rational}{\mathbb{Q}} % Reelle Zahlen
\newcommand{\nat}{\mathbb{N}} % NatÃ¼rliche Zahlen
\newcommand{\units}[1][d]{S_{#1}} % EinheitssphÃ¤re
\newcommand{\unitsdd}{\units[\dd]} % Einheitspähre in D
\newcommand{\sdd}{\unitsdd} % Einheitspähre in D
\newcommand{\LL}{\mathbb{L}} % EinheitssphÃ¤re
\newcommand{\dd}{\mathbb{D}} % Processes
\newcommand{\ddcon}{\bar{\dd}_0} % Processes
\newcommand{\rvdd}[1][\mu]{RV_{\ddcon} (\alpha, (a_n), #1)} % Regular Variation
\newcommand{\rvan}[1][\mu]{RV (\alpha, (a_n), #1)} % Regular Variation
\newcommand{\pspace}{(\Omega, \mathcal{F}, P )}
\newcommand{\ev}{\mathbb{E}\,} % Erwartungswert
\newcommand{\id}{\mathds{1}} % Indikatorfunktion, Doppel-Eins
\newcommand{\gegen}[1][]{\xrightarrow{#1}}
\newcommand{\gegend}{\gegen[d]}
\newcommand{\gegenv}{\gegen[v]}
\newcommand{\what}{\hat{w}}
\newcommand{\tow}{\gegen[\what]}
\newcommand{\tov}{\gegenv}
\newcommand{\schl}[1]{\widetilde{#1}} % Schlange Ã¼ber Symbol
\renewcommand{\bar}[1]{\overline{#1}} % FÃ¼r Dach Ã¼ber der Variablen
\newcommand{\zweiquad}{\quad\quad} % GroÃe AbstÃ¤nde in Formeln
\newcommand{\vierquad}{\quad\quad\quad\quad} % GroÃe AbstÃ¤nde in Formeln
\newcommand{\achtquad}{\quad\quad\quad\quad\quad\quad\quad\quad} % GroÃe AbstÃ¤nde in Formeln
\def\B_e{B_{\eta}(e)}
\definecolor{blue}{rgb}{.1, 0.1,.8}
\definecolor{green}{rgb}{0,0.8,0.2}
\definecolor{red}{rgb}{1, 0,0}
\newcommand{\beao}{\begin{eqnarray*}}
\newcommand{\eeao}{\end{eqnarray*}\noindent}
\newcommand{\beam}{\begin{eqnarray}}
\newcommand{\eeam}{\end{eqnarray}\noindent}
\newcommand{\beqq}{\begin{equation}}
\newcommand{\eeqq}{\end{equation}\noindent}
\newcommand{\bce}{\begin{center}}
\newcommand{\ece}{\end{center}}
\newcommand{\barr}{\begin{array}}
\newcommand{\earr}{\end{array}}
\newcommand{\vague}{\stackrel{\lower0.2ex\hbox{$\scriptscriptstyle
                    \it{v} $}}{\rightarrow}}
\newcommand{\weak}{\stackrel{\lower0.2ex\hbox{$\scriptscriptstyle
                    \it{w} $}}{\rightarrow}}
\newcommand{\bdis}{\begin{displaymath}}
\newcommand{\edis}{\end{displaymath}\noindent}
\begin{document}

\title[Functional Regular Variation of MMA Processes]
{Functional Regular Variation of L\'evy-Driven Multivariate Mixed Moving Average Processes}

\author[Martin Moser]{Martin Moser}
\author[Robert Stelzer]{Robert Stelzer}

\address{Institute for Advanced Study \& Zentrum Mathematik, Technische Universit\"at M\"unchen,
Boltzmann\-stra\ss e 3, 85748 Garching bei M\"unchen, Germany}
\email{moser@ma.tum.de}
\address{Institute of Mathematical Finance, Ulm University, Helmholtzstraße 18, 89081 Ulm, Germany}
\email{robert.stelzer@uni-ulm.de}

\begin{abstract}
  We consider  the functional regular variation in the space $\dd$ of c\`adl\`ag functions of multivariate mixed moving average (MMA) processes of the type $X_t = \int\int f(A, t - s) \Lambda (d A, d s)$. We give sufficient conditions for an MMA process $(X_t)$ to have c\`adl\`ag sample paths. As our main result, we prove that $(X_t)$ is regularly varying in $\dd$ if the driving L\'evy basis is regularly varying and the kernel function $f$ satisfies certain natural (continuity) conditions. Finally, the special case of supOU processes, which are used, e.g., in applications in finance, is considered in detail.
\end{abstract}
\subjclass[2000]{Primary: 60G51; 60G70}
\keywords{ càdlàg sample paths, functional regular variation, heavy tails, L\'evy basis, mixed moving average process}
\thanks{Martin Moser gratefully acknowledges the support of both, the Technische Universit\"at M\"unchen - Institute for Advanced Study, funded by the German Excellence Initiative, as well as the International Graduate School of Science and Engineering (IGSSE) at Technische Universit\"at M\"unchen, Germany.}
\maketitle

\section{Introduction}\label{sec:dd:intro}

In many applications of stochastic processes, the center of the distributions involved and related quantities (e.g.\ sample means, variances etc.) can be modeled quite well. In view of the central limit theorem, Gaussian distributions play an important role in that field. However, this needs not to be true for the tail of the distribution which is of great importance in many areas of application. Possible examples are severe crisis in stock markets or extreme weather events which can cause huge losses to the financial industry, insurance companies and also to private people. Therefore, it is of great importance to model the distribution tail and related quantities (e.g.\ quantiles, exceedances, maxima etc.) correctly.\par
A very well established concept to model extreme values is regular variation. It has its origin in classical extreme value theory, where limit distributions for sample maxima are derived. The maximum domains of attraction of two of the three possible standard extreme value distributions (Fr\'echet and Weibull) can be described by regular variation of functions, meaning functions behaving like a power law in the limit, see also \cite{embrechts:klueppelberg:mikosch:1997} and \cite{resnick:2007}.\par 
Moreover, regular variation can intuitively be extended to a multivariate setup. It is then formulated in terms of vague convergence of measures given by
\begin{equation}\label{eqn:regvarintro}
n P ( a_n^{-1} X \in \cdot ) \tov \mu (\cdot ),
\end{equation}
where $X$ is a multivariate random vector, $(a_n)$ an increasing sequence and $\mu$ is a Radon measure. Since $\mu$ is homogeneous, multivariate regular variation of $X$ can be interpreted as convergence of the radial part $\|X\|$ to a univariate regularly varying random variable $Y$ and of the spherical part $X / \|X\|$ to a random variable $Z$ on the unit sphere, which is independent of $Y$ and can be described by the measure $\mu$. Detailed introductions to multivariate regular variation can be found in \cite{resnick:2007} and \cite{hult:lindskog:2006b}.\par
Finally, \cite{hult:lindskog:2005} extended the definition \eqref{eqn:regvarintro} to the space of multivariate stochastic processes with sample paths in the space $\dd$ of c\`adl\`ag functions, i.e.\ right-continuous functions with limits from the left. The formulation of regular variation in such generality has the advantage that, in addition to functionals based on the values of a stochastic process at fixed time points, one can also analyze functionals acting on the complete sample paths of the process. This is a very powerful tool for the analysis of extremal properties of a process, especially in combination with methods for weak convergence of point processes which are closely related to regular variation (see Section \ref{sec:dd:pointprocess}). Despite the power of this technique, conditions ensuring regular variation in $\dd$ have  so far been given only for few classes of processes.\par
In this paper we apply the concept of regular variation on $\dd$ to multivariate mixed moving average (MMA) processes with c\`adl\`ag sample paths. MMA processes have been first introduced by \cite{surgailis:rosinski:mandrekar:cambanbis:1993} in the univariate stable case and are given as integrals of the form
\[ X_t = \int\limits_{M_d^-} \int\limits_{\real} f(A, t - s) \Lambda (dA, ds),\]
where $\Lambda$ is a multivariate L\'evy basis. The class of (multivariate) MMA processes covers a wide range of processes which are well known and extensively used in applications. Examples include Ornstein-Uhlenbeck processes (cf.\ \cite{barndorff:shephard:2001} and \cite{pigorsch:stelzer:2009}), superpositions of Ornstein-Uhlenbeck (supOU) processes (cf.\ \cite{barndorff:2001} and \cite{barndorff:stelzer:2011a}), (fractionally integrated) CARMA processes (cf.\ \cite{brockwell:2004}, \cite{marquardt:2007}) and increments of fractional L\'evy processes (cf.\ \cite{marquardt:2006}, \cite{bender:lindner:schicks:2011} and references therein).\par
Regular variation of the finite-dimensional distributions of MMA processes has already been proved in \cite{moser:stelzer:2011}, given that the underlying L\'evy basis is regularly varying and the kernel function satisfies the integrability condition $f \in \mathbb{L}^\alpha$. In this paper we give additional integrability and continuity conditions on the kernel function $f$ such that the MMA process is functionally regularly varying on $\dd$. Furthermore, we also analyze the special case of multivariate supOU processes given by
\[ X_t = \int\limits_{M_d^-} \int\limits_{-\infty}^t e^{A (t - s)} \Lambda (dA, ds)\]
and give some more accessible sufficient conditions for supOU processes to be functionally regularly varying.\par
The paper is organized as follows. In Section \ref{sec:dd:prelim:multregvar} we introduce the notion of multivariate regular variation and related properties. In Section \ref{sec:dd:prelim:mma} we recall the definition of MMA processes and the related integration theory. Furthermore, we review the conditions for existence of MMA processes and for regular variation of their finite dimensional distributions. The sample path behavior of MMA processes is discussed in Section \ref{sec:dd:sample}. We give an overview over the relevant literature and derive new sufficient conditions for MMA processes to have c\`adl\`ag sample paths in the case where the underlying L\'evy process is of finite variation. In Section \ref{sec:dd:funcregvar} we introduce the notion of functional regular variation and prove that MMA processes are regularly varying on $\dd$, given certain conditions. In Section \ref{sec:dd:supOU} we verify these conditions in the special case of supOU processes. Finally, in Section \ref{sec:dd:pointprocess} we show the connection between functional regular variation and point process convergence and discuss the relevance of the results to the extremal analysis of MMA processes.

\pagebreak
\section{Preliminaries}\label{sec:dd:prelim}
\subsection{Notation}\label{sec:dd:prelim:not}

Let $\real$ be the real numbers, $\real^+$ the positive and $\real^-$ the negative real numbers, both without $0$. $\nat$ is the set of positive integers and $\rational$ are the rational numbers. The Borel sets are denoted by $\borel$ and $\borel_b$ are the bounded Borel sets. $\lambda$ is the Lebesgue measure on $\real$ and $B_r(x) := \{y \in \reald: \|y - x\| \leq r\}$ is the closed ball of radius $r$ centered at $x$. $\dd$ is the space of c\`adl\`ag (right-continuous with left limits) functions $x : [0,1] \to \reald$ and $\sdd = \{x \in \dd: \sup_{t \in [0,1]} \|x_t\|= 1\}$ is the unit sphere in $\dd$.\par
For matrices, $M_{n,d}$ is the set of all $n \times d$ matrices and $M_d$ the set of all $d\times d$ matrices. $M_d^-$ is the set of all $d\times d$ matrices with all eigenvalues having strictly negative real part. $I_d$ is the $d \times d$ identity matrix. We write $A^T$ for the transposed of a matrix $A$ and $\|A\|$ for the matrix norm induced by the Euclidean norm.\par
If random variables, vectors, processes, measures etc.\ are considered, they are given as measurable mappings with respect to a complete probability space $\pspace$.\par
Vague convergence is defined in terms of convergence of Radon measures and it is denoted by $\gegenv$. It is defined on the one-point uncompactification $\realdcon$, which assures that the sets bounded away from the origin can be referred to as the relatively compact sets in the vague topology. Similarly, $\hat{w}$-convergence is given by the convergence of boundedly finite measures and is defined on $\ddcon = (0, \infty ] \times \sdd$, which can be viewed as the one-point uncompactification in $\dd$.

\subsection{Multivariate Regular Variation}\label{sec:dd:prelim:multregvar}
 Regular variation on $\reald$ is expressed in terms of vague convergence of measures and several different, but equivalent, definitions exist. For detailed and very good introductions to regular variation we refer to \cite{bingham:goldie:teugels:1987}, \cite{resnick:1987}, \cite{resnick:2007} and \cite{lindskog:2004}.

\begin{definition}[Multivariate Regular Variation]\label{def:dd:regvar1}
A random vector $X\in \reald$ is \notionOI{regularly varying} if there exists a sequence $(a_n)_{n \in \nat}$, $0 < a_n \nearrow \infty$, and a nonzero Radon measure $\mu$ on $\borel ( \realdcon )$ such that $\mu ( \realdc \wo \reald ) = 0$ and, as $n \to \infty$,
\[ n P( a_n^{-1} X \in \, \cdot ) \gegenv \mu (\cdot)\]
on $\borel ( \realdcon )$. Similarly, we call a Radon measure $\nu$ regularly varying if $(a_n)$ and $\mu$ exist as above such that, as $n \to \infty$,
\[n\, \nu(a_n^{-1} \cdot) \gegenv \mu(\cdot).\]
\end{definition}

The limiting measure $\mu$ of the definition is homogeneous, i.e.\ it necessarily satisfies the condition
\[\mu (t B) = t^{-\alpha} \mu (B)\]
for all $B \in \borel ( \realdcon )$ and $t >0$. Hence, we write $X \in \rvan$ or $\nu \in \rvan$, respectively. In the special case of an infinitely divisible random vector $X \in \reald$ with Lévy measure $\nu$ we know that $X \in \rvan$ if and only if $\nu \in \rvan$ (see \cite{hult:lindskog:2006a}, Proposition 3.1). This result is very useful throughout this work, since MMA processes are infinitely divisible, just as the driving L\'evy bases are. A detailed introduction to infinitely divisible distributions and L\'evy processes can be found in \cite{sato:2002}, for instance.

\subsection{Multivariate Mixed Moving Average Processes}\label{sec:dd:prelim:mma}

In this section we shortly recall the definition and main properties of multivariate \notionOI{mixed moving average processes} (short MMA processes).\par
A multivariate ($\real^n$-valued) MMA process $(X_t)$ can be defined as an integral over a measurable kernel function $f : M_d^- \times \real \mapsto M_{n,d}$ with respect to an $\reald$-valued L\'evy basis $\Lambda$ on $M_d^- \times \real$, i.e.
\[ X_t := \int \limits_{M_d^-} \int \limits_{\real} f(A, t - s) \Lambda (d A, d s).\]
An $\reald$-valued \notion{L\'evy basis} $\Lambda = (\Lambda (B) )$ with $B \in \borel_b (M_d^- \times \real)$ is a random measure which is

\begin{list}{$\bullet$}{\setlength{\leftmargin}{0.5cm}}
\item \notionOI{infinitely divisible}, i.e.\ the distribution of $\Lambda (B)$ is infinitely divisible for all $B \in  \borel_b (M_d^- \times \real)$,
\item \notionOI{independently scattered}, i.e.\ for any $n \in \nat$ the random variables $\Lambda(B_1), \dots, \Lambda(B_n)$ are independent for pairwise disjoint sets $B_1, \dots, B_n \in \borel_b (M_d^- \times \real)$ and
\item \notionOI{$\sigma$-additive}, i.e.\ for any pairwise disjoint sets $(B_i)_{i \in \nat}  \in \borel_b (M_d^- \times \real)$ with $\bigcup_{n \in \nat} B_n \in \borel_b (M_d^- \times \real)$ we have $\Lambda (\bigcup_{n \in \nat} B_n) = \sum_{n \in \nat} \Lambda(B_n)$ almost surely.
\end{list}

Thus, L\'evy bases are also called infinitely divisible independently scattered random measures (i.d.i.s.r.m.). Following the relevant literature (cf. \cite{fasen:2005}, \cite{fasen:2009}, \cite{fasen:klueppelberg:2007}, \cite{barndorff:stelzer:2011a} and \cite{moser:stelzer:2011}) we only consider time-homogeneous and factorisable L\'evy bases, i.e.\ L\'evy bases with characteristic function
\begin{equation}\label{eqn:pp:charfunc}
\ev \left( e^{iu^T \Lambda(B)} \right) = e^{\varphi ( u ) \Pi (B)}
\end{equation}
for all $u \in \reald$ and $B \in \borel_b (M_d^- \times \real)$, where $\Pi = \lambda \times \pi$ is the product of a probability measure $\pi$ on $M_d^-(\real)$ and the Lebesgue measure $\lambda$ on $\real$ and
\[\varphi ( u ) = iu^T\gamma - \frac{1}{2} u^T\Sigma u + \int\limits_{\reald} \left( e^{iu^Tx} - 1 - iu^T x \id_{[-1,1]} (\|x\|) \right) \nu (dx)\]
is the characteristic function of an infinitely divisible distribution with characteristic triplet $(\gamma, \Sigma, \nu)$.  The distribution of the L\'evy basis is then completely determined by $(\gamma, \Sigma, \nu, \pi)$ which is therefore called the \notionOI{generating quadruple}. By $L$ we denote the \notionOI{underlying Lévy process} which is given by $L_t = \Lambda (M_d^- \times (0, t])$ and $L_{-t} = \Lambda (M_d^- \times [ -t, 0))$ for $t \in \real^+$. For more details on L\'evy bases see \cite{rajput:rosinski:1989} and \cite{pedersen:2003}.\par
We should stress that the set $M_d^-$ in the definition of MMA processes can be replaced by $M_d$ or basically any other topological space. The choice of $M_d^-$ is motivated by the special case of supOU processes, where this is the canonical choice.\par
Necessary and sufficient conditions for the existence of MMA processes are given by the multivariate extension of Theorem 2.7 in \cite{rajput:rosinski:1989}.

\begin{theorem}\label{thm:dd:MMAex}
Let $\Lambda$ be an $\reald$-valued Lévy basis with characteristic function of the form \eqref{eqn:pp:charfunc} and let $f : M_d^- \times \real \mapsto M_{n,d}$ be a measurable function. Then $f$ is $\Lambda$-integrable as a limit in probability in the sense of \cite{rajput:rosinski:1989} if and only if
\small\begin{align}
&\int\limits_{M_d^-} \int\limits_\real \bigg\| f(A,s)\gamma + \int\limits_{\reald} f(A,s) x \left(\id_{[0,1]} \left(\|f(A,s)x\|\right) - \id_{[0,1]} \left(\|x\|\right) \right) \nu (dx) \bigg\| ds \pi (dA) < \infty,\label{eqn:dd:exbed1}\\
&\int\limits_{M_d^-} \int\limits_{\real} \| f(A,s) \Sigma\,f(A,s)^T \| d s \pi (d A) < \infty\quad\mbox{ and}\label{eqn:dd:exbed2}\\
&\int\limits_{M_d^-} \int\limits_{\real} \int\limits_{\reald} \left(1 \wedge \|f(A,s)x\|^2\right) \nu(d x) d s \pi (d A) < \infty.\label{eqn:dd:exbed3}
\end{align}\normalsize
If $f$ is $\Lambda$-integrable, the distribution of $X_0 = \int_{M_d^-}\int_{\real^+} f(A,s) \Lambda (dA, ds)$ is infinitely divisible with characteristic triplet $(\gamma_{int}, \Sigma_{int}, \nu_{int})$ given by
\small\begin{align*}
&\gamma_{int} = \int\limits_{M_d^-} \int\limits_\real \left( f(A,s)\gamma + \int\limits_{\reald} f(A,s) x \left(\id_{[0,1]} \left(\|f(A,s)x\|\right) - \id_{ [0,1]} \left(\|x\|\right) \right) \nu (dx) \right) ds \pi (dA),\\
&\Sigma_{int} = \int\limits_{M_d^-} \int\limits_{\real} f(A,s) \Sigma\,f(A,s)^T d s \pi (d A)\quad\mbox{ and}\\
&\nu_{int} (B)= \int\limits_{M_d^-} \int\limits_{\real} \int\limits_{\reald} \id_B(f(A,s)x) \nu(d x) d s \pi (d A) \quad \mbox{ for all Borel sets } B \subseteq \reald.
\end{align*}\normalsize
\end{theorem}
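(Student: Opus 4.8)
The plan is to follow the structure of Rajput and Rosinski's proof of their Theorem 2.7 (\cite{rajput:rosinski:1989}), carrying it over to the matrix-valued, multivariate setting. The natural starting point is the integral for simple functions: for $f = \sum_{j=1}^{k} a_j \id_{B_j}$ with $a_j \in M_{n,d}$ and pairwise disjoint $B_j \in \borel_b(M_d^- \times \real)$ one sets $\int f\, d\Lambda = \sum_j a_j \Lambda(B_j)$. Writing $u^T a_j \Lambda(B_j) = (a_j^T u)^T \Lambda(B_j)$ and using independent scattering together with \eqref{eqn:pp:charfunc} yields, for all $u \in \real^n$,
\[
\ev\left( e^{iu^T \int f\, d\Lambda} \right) = \exp\left( \int_{M_d^-}\int_\real \varphi\big( f(A,s)^T u \big)\, \Pi(dA,ds) \right).
\]
By definition, $f$ is $\Lambda$-integrable if there exist simple functions $f_m \to f$ $\Pi$-a.e.\ whose integrals over every bounded Borel set converge in probability, and $\int f\, d\Lambda$ is then this limit.

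The core of the argument is to show that this probabilistic convergence is equivalent to the three deterministic conditions \eqref{eqn:dd:exbed1}--\eqref{eqn:dd:exbed3}. Each $\int f_m\, d\Lambda$ is infinitely divisible, and a limit in probability of infinitely divisible vectors is again infinitely divisible; hence, by the continuity theorem, convergence in probability is equivalent to convergence of the associated log-characteristic functions $u \mapsto \int \varphi(f_m(A,s)^T u)\, \Pi(dA,ds)$. Inserting the L\'evy--Khintchine form of $\varphi$ and splitting it into its drift, Gaussian and jump parts, this convergence is controlled separately by the three pieces: the Gaussian part yields \eqref{eqn:dd:exbed2}, the jump part yields \eqref{eqn:dd:exbed3} via the standard $(1 \wedge \|f(A,s)x\|^2)$ truncation estimate, and the drift part, after compensating for the change of truncation region, yields \eqref{eqn:dd:exbed1}. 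Appealing to the one-to-one correspondence between infinitely divisible laws and characteristic triplets (\cite{sato:2002}) then upgrades finiteness of these integrals to genuine $\Lambda$-integrability, giving both directions of the equivalence.

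Finally, passing to the limit in the simple-function formula gives
\[
\ev\left( e^{iu^T X_0} \right) = \exp\left( \int_{M_d^-}\int_\real \varphi\big( f(A,s)^T u \big)\, \Pi(dA,ds) \right),
\]
and it remains to read off the characteristic triplet by a Fubini rearrangement of the triple integral. The only subtlety is that the jump integral must be re-expressed in the variable $y = f(A,s)x$ with the canonical truncation $\id_{[-1,1]}(\|y\|)$; since $u^T f(A,s) x = u^T y$, the mismatch between $\id_{[0,1]}(\|f(A,s)x\|)$ and the inherited $\id_{[0,1]}(\|x\|)$ produces precisely the compensating linear term that is absorbed into $\gamma_{int}$. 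Tracking this bookkeeping through reproduces exactly $(\gamma_{int}, \Sigma_{int}, \nu_{int})$ as stated.

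I expect the main obstacle to be the equivalence in the second paragraph: turning convergence in probability of the $\real^n$-valued integrals into finiteness of the three \emph{norm} conditions requires handling the matrix norms carefully, so that the scalar (in $u$) continuity-theorem estimates become the uniform integrability bounds \eqref{eqn:dd:exbed1}--\eqref{eqn:dd:exbed3} independently of $u$, and the re-truncation term in the drift must be tracked consistently through the limiting procedure. The measurability and Fubini justifications in the final step are routine by comparison.
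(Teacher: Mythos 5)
The paper offers no proof of this theorem at all: it is quoted verbatim as ``the multivariate extension of Theorem 2.7 in \cite{rajput:rosinski:1989}'', so the intended argument is exactly the Rajput--Rosi\'nski proof carried over to the matrix-valued kernel, which is what you sketch (simple functions, independent scattering giving $\exp\big(\int\varphi(f^Tu)\,d\Pi\big)$, the continuity/convergence theorem for infinitely divisible laws splitting into drift, Gaussian and jump parts, and the change of truncation absorbed into $\gamma_{int}$). Your outline is correct and coincides with the paper's (implicit) approach; the one point to be precise about when writing it out is that the passage from convergence of characteristic exponents back to convergence \emph{in probability} of the integrals uses the L\'evy-type equivalence for independently scattered integrals, which is supplied by the lemmas of \cite{rajput:rosinski:1989}.
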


However, since the focus of this section is on regularly varying processes, we will also use some more convenient conditions for this special setting. They have been derived in \cite{moser:stelzer:2011} and are based on integrability conditions described by the following space of functions
\small\[\mathbb{L}^\delta (\lambda \times \pi) := \Bigg\{ f : M_d^- \times \real \mapsto M_{n,d} \mbox{ measurable, } \int\limits_{M_d^-}\int\limits_\real \| f(A,s) \|^\delta ds \pi (dA) < \infty \Bigg\}.\]\normalsize

\begin{theorem}[\cite{moser:stelzer:2011}, Theorem 2.6]\label{thm:dd:MMAex2}
Let $\Lambda$ be a L\'evy basis with values in $\reald$ and characteristic quadruple $(\gamma, \Sigma, \nu, \pi)$. Furthermore, let $\nu$ be regularly varying with index $\alpha$ and let $f : M_d^- \times \real \mapsto M_{n,d}$ be a measurable function. Then $f$ is $\Lambda$-integrable in the sense of \cite{rajput:rosinski:1989} and $X_t$ is well defined for all $t \in \real$, stationary and infinitely divisible with known characteristic triplet (see Theorem \ref{thm:dd:MMAex}) if one of the following conditions is satisfied:
\begin{enumerate}
	\item[(i)] $L_1$ is $\alpha$-stable with $\alpha \in (0,2) \wo \{1\}$ and $f \in \LL^\alpha \cap \LL^1$.
	\item[(ii)] $f$ is bounded and $f \in \LL^\delta$ for some $\delta < \alpha$, $\delta \leq 1$.
	\item[(iii)] $f$ is bounded, $\ev L_1 = 0$, $\alpha > 1$ and $f \in \LL^\delta$ for some $\delta < \alpha$, $\delta \leq 2$.
\end{enumerate}   
\end{theorem}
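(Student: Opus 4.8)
The plan is to deduce the theorem from Theorem~\ref{thm:dd:MMAex}: it suffices to verify the three integrability conditions \eqref{eqn:dd:exbed1}, \eqref{eqn:dd:exbed2} and \eqref{eqn:dd:exbed3} under each of the hypotheses (i)--(iii). Once $\Lambda$-integrability is established, well-definedness of $X_t$ for every $t$ and the stated characteristic triplet are immediate from Theorem~\ref{thm:dd:MMAex}, while stationarity follows from the time-homogeneity of $\Lambda$ (its control measure $\lambda\times\pi$ is invariant under shifts in the $s$-variable) together with the fact that the kernel enters only through $t-s$: a substitution $s\mapsto s+h$ shows that the finite-dimensional distributions of $(X_{t+h})$ coincide with those of $(X_t)$.

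The technical heart is a family of scaling estimates for the regularly varying L\'evy measure $\nu$. Writing $M=f(A,s)$ and $c=\|M\|$, regular variation of $\nu$ with index $\alpha$ means $\nu(\{\|x\|>r\})$ is regularly varying of index $-\alpha$; together with Karamata's theorem and the Potter bounds (see \cite{bingham:goldie:teugels:1987}) this yields, for any $\delta<\alpha$ with $\delta\le2$ and $c$ bounded, a constant $C$ with $\int_{\reald}(1\wedge\|Mx\|^2)\,\nu(dx)\le C\,c^{\delta}$. The point is that splitting the integral at $\|x\|=1/c$ produces a near-origin part controlled by $c^2\int_{\|x\|\le1}\|x\|^2\nu(dx)$ (finite since $\nu$ is a L\'evy measure, and $c^2\le c^\delta$ for $c\le1$), a middle part governed by Karamata's asymptotic for $\int_{1<\|x\|\le1/c}\|x\|^2\nu(dx)$, which scales like $c^{\alpha}$ up to a slowly varying factor, and a tail part $\nu(\{\|x\|>1/c\})$ of the same order; the Potter inequality absorbs the slowly varying factor into $c^{\delta}$. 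In the stable case $\nu$ is exactly homogeneous, so passing to polar coordinates gives the sharper $\int(1\wedge\|Mx\|^2)\nu(dx)=C_\alpha\int_{\bbs^{d-1}}\|M\theta\|^{\alpha}\sigma(d\theta)\le C\|M\|^{\alpha}$, with $\sigma$ the spherical component of $\nu$. With these estimates, \eqref{eqn:dd:exbed2} and \eqref{eqn:dd:exbed3} are routine: for \eqref{eqn:dd:exbed2} I use $\|M\Sigma M^{T}\|\le\|\Sigma\|\,c^{2}$ and note that boundedness of $f$ with $f\in\LL^{\delta}$ gives $f\in\LL^{2}$ (and $\Sigma=0$ in the stable case), while \eqref{eqn:dd:exbed3} reduces to $\int\int c^{\delta}\,ds\,\pi(dA)=\|f\|_{\LL^{\delta}}^{\delta}<\infty$ (resp.\ to $f\in\LL^{\alpha}$).

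The main obstacle is condition \eqref{eqn:dd:exbed1}, the drift/compensator term $G(M):=M\gamma+\int_{\reald}Mx\big(\mathbf{1}_{\{\|Mx\|\le1\}}-\mathbf{1}_{\{\|x\|\le1\}}\big)\nu(dx)$. The difference of indicators vanishes near the origin and is supported on the shell where exactly one of $\|Mx\|\le1$, $\|x\|\le1$ holds, so the integral is well defined; the task is to bound $\|G(M)\|$ by a power of $c$ that is integrable against $\lambda\times\pi$. In case (ii), where $\delta\le1$, I would treat $M\gamma$ separately---boundedness and $f\in\LL^{\delta}$ with $\delta\le1$ give $f\in\LL^{1}$, so $\int\int\|M\gamma\|\le\|\gamma\|\,\|f\|_{\LL^{1}}<\infty$---and bound the shell integral on its two pieces $\{\|Mx\|\le1<\|x\|\}$ and $\{\|x\|\le1<\|Mx\|\}$ by the same Karamata/Potter analysis, obtaining $\le C c^{\delta}$. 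The decisive case is (iii): there $\delta$ may exceed $1$, so $f\in\LL^{1}$ can fail and $M\gamma$ cannot be handled on its own. Instead I would exploit $\ee L_1=0$: since $\alpha>1$ the mean $m=\int_{\|x\|>1}x\,\nu(dx)$ is finite and $\ee L_1=\gamma+m=0$, i.e.\ $\gamma=-m$. Substituting this and recombining the resulting integrals over $\{\|x\|>1\}$, $\{\|Mx\|\le1<\|x\|\}$ and $\{\|Mx\|>1\ge\|x\|\}$---each legitimately convergent because it is bounded away from the origin---collapses the drift term to the clean expression $G(M)=-\int_{\{\|Mx\|>1\}}Mx\,\nu(dx)$. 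On this set $\|x\|>1/c$, whence $\|G(M)\|\le c\int_{\|x\|>1/c}\|x\|\,\nu(dx)$; as $\alpha>1$, Karamata gives that $\int_{\|x\|>r}\|x\|\nu(dx)$ is regularly varying of index $1-\alpha$, so $\|G(M)\|\le C c^{\alpha}L(1/c)\le C'c^{\delta}$ and \eqref{eqn:dd:exbed1} again reduces to $f\in\LL^{\delta}$. In the stable case (i) the homogeneity computation used for \eqref{eqn:dd:exbed3} gives $\|G(M)\|\le C(\|M\|^{\alpha}+\|M\|)$, so \eqref{eqn:dd:exbed1} follows from $f\in\LL^{\alpha}\cap\LL^{1}$. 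I expect the re-centering step in case (iii) and the careful region-by-region use of the Karamata/Potter estimates to be the part requiring the most care.
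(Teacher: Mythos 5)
This statement is imported verbatim from \cite{moser:stelzer:2011} (Theorem 2.6 there) and is not proved in the present paper, so there is no internal proof to compare against; judged on its own, your argument is correct and follows what is essentially the route of the cited source. Reducing everything to the three Rajput--Rosi\'nski conditions \eqref{eqn:dd:exbed1}--\eqref{eqn:dd:exbed3} of Theorem \ref{thm:dd:MMAex}, establishing the scaling bound $\int_{\reald}(1\wedge\|f(A,s)x\|^2)\,\nu(dx)\le C\|f(A,s)\|^{\delta}$ via Karamata and Potter, using exact homogeneity of $\nu$ in the stable case (which is where both $\LL^\alpha$ and $\LL^1$, and the exclusion $\alpha\neq 1$, enter), and in case (iii) exploiting $\ev L_1=0$ to rewrite the drift term as $-\int_{\{\|f(A,s)x\|>1\}}f(A,s)x\,\nu(dx)$ so that it too is $O(\|f(A,s)\|^{\delta})$, are exactly the right steps, and the region-by-region estimates you sketch all go through.
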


Regular Variation of $X_t$ for fixed $t \in \real$ as well as regular variation of the finite dimensional distributions of the process $(X_t)$ have been derived in \cite{moser:stelzer:2011} under similar conditions.

\begin{theorem}[\cite{moser:stelzer:2011}, Th. 3.2 and Cor. 3.5]\label{thm:dd:regvarMMA}
Let $\Lambda$ be an $\reald$-valued L\'evy basis on $M_d^- \times \real$ with generating quadruple $(\gamma, \Sigma, \nu, \pi)$ and let $\nu \in \rvan[\mu_\nu]$. If $X_0 = \int_{M_d^-}\int_{\real} f(A,s) \Lambda (dA, ds)$ exists (in the sense of Theorem \ref{thm:dd:MMAex}), $f \in \mathbb{L}^\alpha (\lambda \times \pi)$ and $\mu_\nu ( f^{-1} (A,s) (\real^n \wo \{0\}) ) = 0$ does not hold for $\pi \times \lambda$ almost-every $(A,s)$, then $X_0 \in \rvan[\mu_X]$ with
\[\mu_X (B) := \int\limits_{M_d^-} \int\limits_{\real} \int\limits_{\reald} \id_{B} \left(f(A,s)x\right) \mu_\nu (d x) d s \pi (d A).\]
Furthermore, the finite dimensional distributions $( X_{t_1}, \ldots, X_{t_k} )$, $t_i \in \real$ and $k \in \nat$, are also regularly varying with index $\alpha$ and a given limiting measure $\mu_{t_1, \ldots, t_k}$.
\end{theorem}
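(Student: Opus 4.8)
The plan is to reduce the statement to a regular-variation property of the L\'evy measure of $X_0$ and then to a limit-interchange over the mixing space. By Theorem~\ref{thm:dd:MMAex}, $X_0$ is infinitely divisible with L\'evy measure
\[
\nu_{int}(B)=\int_{M_d^-}\int_\real\int_\reald\id_B\big(f(A,s)x\big)\,\nu(dx)\,ds\,\pi(dA).
\]
Since an infinitely divisible law is regularly varying with norming $(a_n)$ and limit $\mu$ if and only if its L\'evy measure is (\cite{hult:lindskog:2006a}, Proposition~3.1), it suffices to prove $\nu_{int}\in\rvan[\mu_X]$, i.e.\ $n\,\nu_{int}(a_n^{-1}\,\cdot\,)\tov\mu_X$ in the sense of Definition~\ref{def:dd:regvar1}, where $(a_n)$ is the sequence attached to $\nu\in\rvan[\mu_\nu]$.

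Fix $(A,s)$ and let $T_{A,s}x:=f(A,s)x$, a linear map $\reald\to\real^n$. Because regular variation is preserved under linear maps, the pushforward satisfies $n\,(\nu\circ T_{A,s}^{-1})(a_n^{-1}\,\cdot\,)\tov\mu_\nu\circ T_{A,s}^{-1}$, mass sent to the origin being discarded; on the degenerate fibres, where $\mu_\nu\big(f^{-1}(A,s)(\real^n\wo\{0\})\big)=0$, the limit is the zero measure. Writing
\[
n\,\nu_{int}(a_n^{-1}B)=\int_{M_d^-}\int_\real n\,(\nu\circ T_{A,s}^{-1})(a_n^{-1}B)\,ds\,\pi(dA),
\]
the claim becomes the interchange of $\lim_{n}$ with the $(\pi\times\lambda)$-integral, the limit integrand being $(\mu_\nu\circ T_{A,s}^{-1})(B)$, whose fibre-integral is exactly $\mu_X(B)$. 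I would establish this on the $\mu_X$-continuity sets by a two-sided estimate. The lower bound comes essentially for free: Fatou's lemma together with the portmanteau theorem applied fibrewise gives $\liminf_n n\,\nu_{int}(a_n^{-1}G)\ge\mu_X(G)$ for open $G$ bounded away from the origin.

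The main obstacle is the matching upper bound $\limsup_n n\,\nu_{int}(a_n^{-1}F)\le\mu_X(F)$ for closed $F$ bounded away from the origin, which by reverse Fatou requires a $(\pi\times\lambda)$-integrable majorant of the pre-limit integrand, uniform in $n$. Using $\|f(A,s)x\|\le\|f(A,s)\|\,\|x\|$ one controls $n\,(\nu\circ T_{A,s}^{-1})(a_n^{-1}F)$ by $n\,\nu\big(\{\|x\|\ge a_n\rho/\|f(A,s)\|\}\big)$ for a suitable $\rho>0$; the tail $x\mapsto\nu(\{\|y\|\ge x\})$ is regularly varying of index $-\alpha$, so its value is asymptotically $c_0\rho^{-\alpha}\|f(A,s)\|^\alpha$, whose fibre-integral is finite precisely because $f\in\LL^\alpha(\lambda\times\pi)$. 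The delicacy is that $\LL^\alpha$ is the \emph{critical} moment condition: the two-sided Potter bound only yields a majorant of order $\|f(A,s)\|^{\alpha\pm\eta}$, which need not be $(\pi\times\lambda)$-integrable over the infinite-measure region where $\|f(A,s)\|$ is small. I would therefore split the mixing space according to the size of $\|f(A,s)\|$, apply the multiplicative uniform convergence theorem for regular variation on the part where $\|f(A,s)\|$ is bounded away from $0$ and $\infty$ (a set of finite $\pi\times\lambda$-measure by Markov's inequality), and control the small-$\|f(A,s)\|$ tail directly through the tail-smallness of $\int\|f\|^\alpha\,d(\lambda\times\pi)$ and the homogeneity of $\mu_\nu$; letting the truncation levels tend to their extremes then closes the gap between $\limsup$ and $\mu_X(F)$. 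The non-degeneracy hypothesis — that $\mu_\nu\big(f^{-1}(A,s)(\real^n\wo\{0\})\big)=0$ fails on a set of positive $\pi\times\lambda$-measure — ensures $\mu_X\neq0$, so the limit is a genuine regular-variation limit.

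The finite-dimensional statement follows by applying the same argument to the stacked process. The vector $(X_{t_1},\dots,X_{t_k})$ is an $\real^{nk}$-valued MMA integral with kernel $\wt f(A,s)=\big(f(A,t_1-s)^T,\dots,f(A,t_k-s)^T\big)^T$, it is infinitely divisible with L\'evy measure the $\wt f$-pushforward of $\nu$, and $\wt f\in\LL^\alpha$ whenever $f$ is. The single-time result then yields regular variation with limiting measure $\mu_{t_1,\dots,t_k}(B)=\int_{M_d^-}\int_\real(\mu_\nu\circ\wt T_{A,s}^{-1})(B)\,ds\,\pi(dA)$, completing the proof.
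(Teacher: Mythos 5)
First, a point of orientation: the paper does not prove this statement --- it is imported from \cite{moser:stelzer:2011} (Th.~3.2 and Cor.~3.5) --- so there is no in-paper proof to compare against. Your overall strategy does match that of the cited reference: reduce to regular variation of the L\'evy measure $\nu_{int}$ via \cite{hult:lindskog:2006a}, Proposition 3.1, establish the fibrewise limit $n\,(\nu\circ T_{A,s}^{-1})(a_n^{-1}\,\cdot\,)\tov\mu_\nu\circ T_{A,s}^{-1}$, interchange the limit with the $(\pi\times\lambda)$-integral, and obtain the finite-dimensional statement by stacking the kernels.

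There is, however, a genuine gap precisely at the step you identify as the main obstacle, and the remedy you sketch does not close it. On the region $\{\|f(A,s)\|\le\epsilon\}$ the Potter bound controls $n\,\nu\bigl(\{\|x\|\ge a_n\rho/\|f(A,s)\|\}\bigr)$ only by a constant multiple of $\|f(A,s)\|^{\alpha-\eta}$, and on an infinite-measure space $f\in\LL^\alpha(\lambda\times\pi)$ gives no control of $\int\|f\|^{\alpha-\eta}$ where $\|f\|$ is small, since there $\|f\|^{\alpha-\eta}\ge\|f\|^{\alpha}$ (for instance $\|f(A,s)\|=s^{-1/(\alpha-\eta/2)}$ for $s\ge1$ lies in $\LL^\alpha$ but not in $\LL^{\alpha-\eta}$). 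Your proposed fix --- controlling the small-$\|f\|$ tail ``through the tail-smallness of $\int\|f\|^\alpha$ and the homogeneity of $\mu_\nu$'' --- is circular: the inequality $\limsup_n\int_{\{\|f\|\le\epsilon\}}n\,\nu(\{\|x\|\ge a_n\rho/\|f\|\})\,ds\,\pi(dA)\le C\int_{\{\|f\|\le\epsilon\}}\|f\|^\alpha\,ds\,\pi(dA)$ is exactly the interchange you are trying to justify, restricted to that region, and homogeneity of $\mu_\nu$ only identifies the candidate limit; it produces no majorant for the prelimit integrand. The missing ingredient is the existence hypothesis: condition \eqref{eqn:dd:exbed3} of Theorem \ref{thm:dd:MMAex} forces $\nu_{int}$ to be a genuine L\'evy measure, i.e. $\int_{M_d^-}\int_{\real}\nu(\{x:\|f(A,s)x\|>\rho\})\,ds\,\pi(dA)<\infty$ for every $\rho>0$, and it is this finiteness at a fixed level --- exploited together with the fibrewise convergence through a Pratt-type generalized dominated convergence argument --- and not $f\in\LL^\alpha$ alone, that supplies the uniform integrability needed for the matching upper bound. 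Your sketch never invokes the existence conditions beyond well-definedness, so the $\limsup$ estimate, and with it the theorem, is not actually established as written.
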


Comparable necessary conditions for regular variation do also exist, see \cite{moser:stelzer:2011}, Theorem 3.4, for details.\par
Next we introduce a result which allows to decompose a L\'evy basis into a drift, a Brownian part, a part with bounded jumps and a part with finite variation. This is the extension of the L\'evy-It\^o decomposition to L\'evy bases.

\begin{theorem}[\cite{barndorff:stelzer:2011a}, Theorem 2.2]\label{thm:levyitobases}
Let $\Lambda$ be a L\'evy basis on $M_d^- \times \real$ with characteristic function of the form \eqref{eqn:pp:charfunc} and generating quadruple $(\gamma, \Sigma, \nu, \pi)$. Then there exists a modification $\schl{\Lambda}$ of $\Lambda$ which is also a L\'evy basis with the same characteristic quadruple $(\gamma, \Sigma, \nu, \pi)$ such that there exists an $\reald$-valued L\'evy basis $\schl{\Lambda}^G$ on $M_d^- \times \real$ with generating quadruple $(0, \Sigma, 0, \pi)$ and an independent Poisson random measure $N$ on $\reald \times M_d^- \times \real$ with intensity measure $\nu \times \pi \times \lambda$ such that
\small\[\schl{\Lambda} (B) = \gamma (\pi \times \lambda) (B) + \schl{\Lambda}^G (B) +  \int\limits_{\|x\| \leq 1} \int\limits_B x (N(dx, dA, ds) - \pi (dA) ds \nu (dx)) + \int\limits_{\|x\| > 1} \int\limits_B x N(dx, dA, ds)\]\normalsize
for all $B \in \borel_b (M_d^- \times \real)$ and $\omega \in \Omega$. If, additionally, $\int_{\|x\| \leq 1} \|x\| \nu(dx) < \infty$, then
\[\schl{\Lambda} (B) = \gamma_0 (\pi \times \lambda) (B) + \schl{\Lambda}^G (B) + \int\limits_{\reald} \int\limits_B x N(dx, dA, ds)\]
for all $B \in \borel_b (M_d^- \times \real)$, where
\[\gamma_0 := \gamma - \int_{\|x\| \leq 1} x \nu(dx).\]
Moreover, the Lebesgue integral exists with respect to $N$ for all $\omega \in \Omega$.
\end{theorem}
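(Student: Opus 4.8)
The plan is to transfer the classical Lévy-Itô decomposition of a Lévy process to the independently scattered random measure $\Lambda$, now tracking the jump size $x \in \reald$ together with the parameter $(A,s) \in M_d^- \times \real$. Write $\Pi := \pi \times \lambda$ for the control measure. First I would associate to $\Lambda$ a random measure $N$ on $\reald \times M_d^- \times \real$ recording, for each jump of $\Lambda$, the pair consisting of its size $x$ and its location $(A,s)$: over any Borel set $D \subseteq \reald$ bounded away from $0$ and any $B \in \borel_b(M_d^- \times \real)$ the restriction of $\Lambda$ has finite jump activity and is compound Poisson, so $N(D \times B)$ is a well-defined count. The independently scattered property of $\Lambda$ gives independence of $N$ over disjoint base sets, while infinite divisibility and the characteristic function \eqref{eqn:pp:charfunc} force $N(D \times B)$ to be Poisson with mean $\nu(D)\,\Pi(B)$; together these identify $N$ as a Poisson random measure with intensity $\nu \times \pi \times \lambda$.

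Next I would define the two jump integrals. The large-jump integral $\int_{\|x\|>1}\int_B x\,N(dx,dA,ds)$ is a.s.\ a finite sum, since $\nu(\{\|x\|>1\}) < \infty$ and $\Pi(B) < \infty$. The small-jump integral is the $L^2$-limit of the compensated integrals over $\{\eps < \|x\| \leq 1\}$ as $\eps \downarrow 0$; convergence follows from the isometry $\ee\| \int_{\eps<\|x\|\leq 1}\int_B x\,(N - \nu\times\Pi)\|^2 = \Pi(B)\int_{\eps<\|x\|\leq 1}\|x\|^2\,\nu(dx)$ together with $\int_{\|x\|\leq 1}\|x\|^2\,\nu(dx) < \infty$. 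I would then define the Gaussian part by subtraction, $\schl{\Lambda}^G(B) := \Lambda(B) - \gamma\,\Pi(B) - (\text{compensated small jumps}) - (\text{large jumps})$, choosing versions so that the identity holds simultaneously and $\schl{\Lambda}^G$ is itself a Lévy basis; computing its characteristic function shows it has generating quadruple $(0,\Sigma,0,\pi)$, i.e.\ it is Gaussian, and since the Gaussian and Poisson components of an infinitely divisible law are independent (applied jointly over finite families of disjoint base sets), $\schl{\Lambda}^G$ is independent of $N$.

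Assembling these pieces yields the stated representation of the modification $\schl{\Lambda}$ with characteristic quadruple $(\gamma,\Sigma,\nu,\pi)$ preserved. For the final assertion, if $\int_{\|x\|\leq 1}\|x\|\,\nu(dx) < \infty$ then $\int_{\|x\|\leq 1}\int_B x\,N$ converges absolutely and no compensation is needed; absorbing the deterministic compensator $\Pi(B)\int_{\|x\|\leq 1}x\,\nu(dx)$ into the drift produces the representation with $\gamma_0 = \gamma - \int_{\|x\|\leq 1}x\,\nu(dx)$ and the uncompensated Poisson integral, which moreover exists pathwise for every $\omega$.

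I expect the main obstacle to be the joint step of identifying $\schl{\Lambda}^G$ as a Gaussian Lévy basis that is genuinely independent of $N$, while simultaneously arranging that $\schl{\Lambda}$ is a bona fide modification of $\Lambda$ (a.s.\ equality for each fixed $B$) and not merely equal in finite-dimensional distribution. This requires care with the measurability of $N$ on the product space and with verifying joint characteristic functionals over disjoint base sets, rather than one-dimensional marginals; alternatively, one could route the argument through the general representation theorem for infinitely divisible independently scattered random measures in \cite{rajput:rosinski:1989} and \cite{pedersen:2003}, which already encodes this Gaussian-Poisson splitting.
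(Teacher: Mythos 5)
The paper does not prove this statement: it is imported verbatim as Theorem 2.2 of \cite{barndorff:stelzer:2011a}, whose own proof rests on Pedersen's extension of the L\'evy--It\^o decomposition to independently scattered random measures \cite{pedersen:2003} together with the representation theory of \cite{rajput:rosinski:1989}. Your outline follows exactly that classical route --- extract a jump measure $N$, handle large jumps as a finite sum, small jumps as an $L^2$-limit of compensated integrals, define the Gaussian part by subtraction, and absorb the compensator into the drift when $\int_{\|x\|\le 1}\|x\|\,\nu(dx)<\infty$ --- so in spirit it matches the source. The two places where your sketch is genuinely thin are the ones you yourself flag. First, ``for each jump of $\Lambda$'' presupposes that a set-indexed random measure has a well-defined atomic jump structure; unlike a c\`adl\`ag L\'evy process, $\Lambda$ carries no canonical path along which discontinuities can be read off, and making $N$ a measurable PRM on the product space $\reald\times M_d^-\times\real$ with intensity exactly $\nu\times\pi\times\lambda$ is the substantive content of Pedersen's construction, not a routine consequence of ``compound Poisson on sets bounded away from $0$.'' Second, the theorem asserts the displayed identity for \emph{all} $B\in\borel_b(M_d^-\times\real)$ and \emph{all} $\omega\in\Omega$ simultaneously, which is strictly stronger than a.s.\ equality for each fixed $B$; this is precisely why the conclusion is phrased in terms of a modification $\schl{\Lambda}$, and it requires a careful choice of versions rather than the termwise subtraction you describe. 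Since you explicitly defer both points to \cite{rajput:rosinski:1989} and \cite{pedersen:2003}, which is where the cited source resolves them, your proposal is an accurate reconstruction of the intended argument rather than a self-contained proof.
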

Throughout the remainder of this paper we assume that all L\'evy bases occurring are already modified such that they have the above L\'evy-It\^o decomposition. Moreover, for a L\'evy basis $\Lambda$ we define two L\'evy bases $\Lambda^{(1)}$ and 
$\Lambda^{(2)}$ by
\begin{align}
 \Lambda^{(1)}(B)&=\gamma (\pi \times \lambda) (B) + \schl{\Lambda}^G (B) +  \int\limits_{\|x\| \leq 1} \int\limits_B x (N(dx, dA, ds) - \pi (dA) ds \nu (dx))\\
 \Lambda^{(2)}(B)&=\int\limits_{\|x\| > 1} \int\limits_B x N(dx, dA, ds)\label{eq:lam2}
\end{align}
for all Borel sets $B$.

In the case of an underlying L\'evy process with finite variation, Theorem \ref{thm:levyitobases} and Theorem \ref{thm:dd:MMAex} can be combined to obtain integrability conditions for this special setting. Note that by Theorem 21.9 in \cite{sato:2002} finite variation of $(L_t)$ is equivalent to $\Sigma = 0$ and $\int_{\|x\| \leq 1} \|x \|\; \nu (dx) < \infty$.

\begin{proposition}[\cite{barndorff:stelzer:2011a}, Prop. 2.4]\label{prop:MMAexfinite}
Let $\Lambda$ be a L\'evy basis on $M_d^- \times \real$ with characteristic function of the form \eqref{eqn:pp:charfunc} and generating quadruple $(\gamma, 0, \nu, \pi)$ such that $\int_{\|x\| \leq 1} \|x \|\; \nu (dx) < \infty$. Let $\gamma_0$ and $N$ be as defined in Theorem \ref{thm:levyitobases}. If $f \in \mathbb{L}^1$ and
\[\int \limits_{M_d^-} \int \limits_{\real} \int \limits_{\reald} (1 \wedge \| f(A, s) x\|) \,\nu (dx)\,ds\,\pi (dA) < \infty,\]
then
\[X = \int \limits_{M_d^-} \int \limits_{\real} f(A,s) \Lambda (dA, ds) = \int \limits_{M_d^-} \int \limits_{\real} f(A,s)\,\gamma_0 \,ds \pi (dA) + \int \limits_{\reald} \int \limits_{M_d^-} \int \limits_{\real}  f(A,s)\,x\, N(dx, dA, ds)\]
and the integrals on the right hand side exist as Lebesgue integrals for every $\omega \in \Omega$. Moreover, the distribution of $X$ is infinitely divisible with characteristic function
\[\ev \left( e^{i u^T X} \right) = \exp \left( i u^T \gamma_{int,0} + \int\limits_{\reald} \left( e^{i u^T x} - 1\right) \nu_{int} (dx) \right),\]
where
\begin{align*}
\gamma_{int,0} &= \int \limits_{M_d^-} \int \limits_{\real} f(A,s)\,\gamma_0 \,ds\,\pi (dA) \quad \mbox{and}\\
\nu_{int} (B) &= \int\limits_{M_d^-} \int\limits_{\real} \int\limits_{\reald} \id_B(f(A,s)x)\, \nu(d x)\, d s\, \pi (d A)
\end{align*}
for all Borel sets $B \subseteq \reald$.
\end{proposition}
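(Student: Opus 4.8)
The plan is to combine the Lévy--It\^o decomposition of Theorem \ref{thm:levyitobases}, in which the Gaussian part vanishes because $\Sigma=0$, with the integrability criterion of Theorem \ref{thm:dd:MMAex}, so that the $\Lambda$-integral can first be shown to exist and then be identified $\omega$-wise with a genuine Lebesgue integral against the Poisson random measure $N$. As a first step I would verify conditions \eqref{eqn:dd:exbed1}--\eqref{eqn:dd:exbed3}. Condition \eqref{eqn:dd:exbed2} is automatic since $\Sigma=0$, and \eqref{eqn:dd:exbed3} follows from the hypothesis $\int_{M_d^-}\int_\real\int_{\reald}(1\wedge\|f(A,s)x\|)\,\nu(dx)\,ds\,\pi(dA)<\infty$ together with the elementary bound $1\wedge t^2\le 1\wedge t$. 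For \eqref{eqn:dd:exbed1} I would split the integrand: the term $f(A,s)\gamma$ is controlled by $\|\gamma\|\int_{M_d^-}\int_\real\|f(A,s)\|\,ds\,\pi(dA)<\infty$, while the correction term $f(A,s)x(\id_{[0,1]}(\|f(A,s)x\|)-\id_{[0,1]}(\|x\|))$ splits into a piece dominated by $1\wedge\|f(A,s)x\|$ (finite by hypothesis) and a piece supported on $\{\|x\|\le1\}$ and dominated there by $\|f(A,s)\|\,\|x\|$, which is integrable by $f\in\LL^1$ and the finite-variation assumption $\int_{\|x\|\le1}\|x\|\,\nu(dx)<\infty$. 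Theorem \ref{thm:dd:MMAex} then gives that $X$ is infinitely divisible with triplet $(\gamma_{int},0,\nu_{int})$.

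The crucial step is the $\omega$-wise existence of the Poisson integral $\int_{\reald}\int_{M_d^-}\int_\real f(A,s)x\,N(dx,dA,ds)$ over the unbounded index space, extending the bounded-set statement at the end of Theorem \ref{thm:levyitobases}. Here I would split according to the size of $f(A,s)x$ rather than of $x$. On $\{\|f(A,s)x\|>1\}$ the intensity measure has finite total mass by hypothesis, so $N$ has only finitely many atoms there almost surely and the contribution is a finite sum, well defined pathwise. On $\{\|f(A,s)x\|\le1\}$, Campbell's formula gives $\ev\int_{\reald}\int_{M_d^-}\int_\real\id_{\{\|f(A,s)x\|\le1\}}\|f(A,s)x\|\,N=\int_{M_d^-}\int_\real\int_{\reald}\id_{\{\|f(A,s)x\|\le1\}}\|f(A,s)x\|\,\nu(dx)\,ds\,\pi(dA)<\infty$, again by hypothesis, so this part converges absolutely a.s. Together these yield existence of the Lebesgue integral against $N$, which combined with the deterministic, finite vector $\int_{M_d^-}\int_\real f(A,s)\gamma_0\,ds\,\pi(dA)$ defines the right-hand side.

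It remains to identify $X$ with this expression and to compute its law. For simple functions the identity $\int f\,d\Lambda=\int f\gamma_0\,d(\pi\times\lambda)+\int fx\,N$ is immediate from Theorem \ref{thm:levyitobases}; passing to the limit along an approximating sequence, the left-hand side converges in probability by definition of the integral of \cite{rajput:rosinski:1989}, while the right-hand side converges a.s. by the domination just established, giving the representation. The characteristic function then follows from the exponential formula for Poisson integrals, $\ev\exp(iu^T\int fx\,N)=\exp\big(\int(e^{iu^Tf(A,s)x}-1)\,\nu(dx)\,ds\,\pi(dA)\big)=\exp\big(\int_{\reald}(e^{iu^Ty}-1)\,\nu_{int}(dy)\big)$, multiplied by $e^{iu^T\gamma_{int,0}}$. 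Finally I would reconcile this drift form with the truncated triplet of Theorem \ref{thm:dd:MMAex}: since $\int_{\|y\|\le1}\|y\|\,\nu_{int}(dy)\le\int_{M_d^-}\int_\real\int_{\reald}(1\wedge\|f(A,s)x\|)\,\nu(dx)\,ds\,\pi(dA)<\infty$, one may pass to the drift $\gamma_{int,0}=\gamma_{int}-\int_{\|y\|\le1}y\,\nu_{int}(dy)$, and a direct computation in which the terms carrying $\id_{[0,1]}(\|f(A,s)x\|)$ cancel shows $\gamma_{int,0}=\int_{M_d^-}\int_\real f(A,s)\gamma_0\,ds\,\pi(dA)$.

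I expect the main obstacle to be precisely the pathwise existence of the Poisson integral over the unbounded parameter set, and the accompanying justification that the in-probability limit defining $\int f\,d\Lambda$ coincides a.s.\ with the pathwise Lebesgue integral. The size-based split of the jumps, tuned exactly to the hypothesis $\int(1\wedge\|f(A,s)x\|)<\infty$, is the mechanism that makes both the absolute convergence and the interchange of limits go through.
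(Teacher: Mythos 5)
The paper does not actually prove this proposition: it is imported verbatim from \cite{barndorff:stelzer:2011a}, Proposition 2.4, so there is no in-paper argument to compare yours against. Your strategy is the standard one for such statements (and, structurally, the one of the cited reference): verify the Rajput--Rosi\'nski conditions \eqref{eqn:dd:exbed1}--\eqref{eqn:dd:exbed3} using $\Sigma=0$, the bound $1\wedge t^2\le 1\wedge t$, and the splitting of the compensator term into the piece where $\|f(A,s)x\|\le 1<\|x\|$ (controlled by $1\wedge\|f(A,s)x\|$) and the piece supported on $\{\|x\|\le 1\}$ (controlled by $\|f(A,s)\|\,\|x\|$); obtain pathwise absolute convergence of the Poisson integral from Campbell's criterion by splitting on $\{\|f(A,s)x\|>1\}$ versus its complement; identify the two sides through approximation; and read off the characteristic function from the exponential formula, checking $\int_{\|y\|\le1}\|y\|\,\nu_{int}(dy)<\infty$ so that the uncompensated drift form is legitimate and the $\id_{[0,1]}(\|f(A,s)x\|)$ terms cancel to give $\gamma_{int,0}$. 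All of these steps are correct.

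The one step that needs more than you have written is the limit passage you yourself flag as the main obstacle. For an arbitrary approximating sequence of simple functions $f_n\to f$, the claimed a.s.\ convergence $\int f_n(A,s)x\,N(dx,dA,ds)\to\int f(A,s)x\,N(dx,dA,ds)$ is not covered by ``the domination just established'': that domination controls $\sum_k\|f(A_k,s_k)x_k\|$ over the atoms of $N$, whereas the natural bound on the approximants is $\|f_n(A_k,s_k)x_k\|\le\|f_n(A_k,s_k)\|\,\|x_k\|$, and $\int_{M_d^-}\int_\real\int_{\reald}\bigl(1\wedge\|f(A,s)\|\,\|x\|\bigr)\,\nu(dx)\,ds\,\pi(dA)$ can be infinite under the stated hypotheses (for instance when $\nu$ is regularly varying with index $\alpha<1$, so that $\int_{\|x\|>1}\|x\|\,\nu(dx)=\infty$); hence $\sum_k\|f_n(A_k,s_k)\|\,\|x_k\|$ need not be summable. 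A clean repair is a two-stage approximation: first replace $f$ by $f\,\id_{B_n}$ for bounded Borel sets $B_n$ increasing to $M_d^-\times\real$ --- here the dominating bound is $\|f(A_k,s_k)x_k\|$ itself, so a.s.\ convergence of the Poisson integrals is immediate, while the Rajput--Rosi\'nski integrals converge in probability by their dominated convergence theorem; then, on each bounded $B_n$, approximate $f\,\id_{B_n}$ uniformly by simple functions and use that $\sum_{k:(A_k,s_k)\in B_n}\|x_k\|<\infty$ a.s.\ by the finite-variation hypothesis $\int_{\|x\|\le1}\|x\|\,\nu(dx)<\infty$. With that adjustment the identification, and hence the whole proof, is complete. (A final cosmetic point: the pathwise assertions hold for every $\omega$ only after the modification fixed in Theorem \ref{thm:levyitobases}, which the paper assumes throughout.)
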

The condition $f\in L^1$ is obsolete if $\gamma_0=0$.

%\begin{corollary}\label{cor:levyitobasesfinite}
%Let $\Lambda$ be a L\'evy basis on $M_d^- \times \real$ with characteristic function of the form \eqref{eqn:pp:charfunc} and generating quadruple $(\gamma, \Sigma, \nu, \pi)$. If the underlying L\'evy process is of finite variation, then there exists a modification $\schl{\Lambda}$ of $\Lambda$, which is also a L\'evy basis with characteristic quadruple $(\gamma, 0, \nu, \pi)$ such that there exists a Poisson random measure $N$ on $\reald \times M_d^- \times \real$ with intensity measure $\nu \times \pi \times \lambda$ such that
%\[\schl{\Lambda} (B) = \gamma_0 (\pi \times \lambda) (B) + \int\limits_{\reald} \int\limits_B x N(dx, dA, ds)\]
%for all $B \in \borel_b (M_d^- \times \real)$ and $\omega \in \Omega$.
%\end{corollary}
%
%\begin{proof}
%Thorem \ref{thm:levyfinitevariation} yields $\Sigma = 0$ and $\int_{\|x\| \leq 1} \|x \|\; \nu (dx) < \infty$. Now the result follows by Theorem \ref{thm:levyitobases}.
%\end{proof}

\section{Sample Path Behavior}\label{sec:dd:sample}

In Section \ref{sec:dd:funcregvar} we review the concept of regular variation for c\`adl\`ag processes and apply it to MMA processes. Therefore, we first have to discuss the sample path behavior of MMA processes.\par
Many examples of results for MMA processes to have c\`adl\`ag sample paths exist in the special case where the underlying L\'evy process has sample paths of finite variation, i.e.\ $\Sigma = 0$ and $\int_{\|x\| \leq 1} \|x \|\; \nu (dx) < \infty$. In this case, the sample path behavior of the driving L\'evy process transfers to the sample paths of the MMA process. For example, define for any L\'evy process $L_t$ the corresponding \notionOI{filtered L\'evy process} $X_t$ by
\begin{equation}\label{eqn:filteredlevy}
X_t = \int\limits_0^t f(t,s)\; dL_s
\end{equation}
for $t \in [0,1]$. If $X_t$ exists, $L_t$ is of finite variation and the kernel function $f$ is bounded and continuous, then $X_t$ has c\`adl\`ag sample paths (cf.\ \cite{hult:lindskog:2005}, Lemma 28).\par
Another result for supOU processes is given by Theorem 3.12 in \cite{barndorff:stelzer:2011a}. This result can be extended to the general case of MMA processes. 

\begin{theorem}\label{thm:cadlagmma}
Let $\Lambda$ be a L\'evy basis on $M_d^- \times \real$ with characteristic function of the form \eqref{eqn:pp:charfunc} and generating quadruple $(\gamma, 0, \nu, \pi)$ such that $\int_{\|x\| \leq 1} \|x \|\; \nu (dx) < \infty$% and $\nu \in \rvan[\mu_\nu]$
. Suppose that the kernel function $f(A,s)$ is continuous and differentiable in $s$ for all $s \in \real \wo \{0\}$ and $f(A, 0^-) = \lim_{s \nearrow 0} f(A, s) = C_1 \in M_{n,d}$ as well as $f(A, 0^+) = \lim_{s \searrow 0} f(A, s) = C_2 \in M_{n,d}$ for all $A \in M_d^-$. Set
\[f' (A, s) := \left\{\begin{array}{l@{\quad}l} \frac{d}{ds} f (A, s) &\mbox{if } s \not= 0,\\ \lim_{s \searrow 0} \frac{d}{ds} f (A, s) & \mbox{if } s = 0\end{array}\right.\]
and assume that for some $\delta > 0$ and for every $t_1, t_2 \in \real$ such that $t_1 \leq t_2$ and $t_2 - t_1 \leq \delta$ the function $\sup_{t \in [t_1, t_2]} \| f'(A, t - s)\|$ satisfies the conditions of Proposition \ref{prop:MMAexfinite}, where $(\gamma, 0, \nu, \pi)$ is replaced by $(\|\gamma\|, 0, \nu_T  , \pi)$ and the L\'evy measure $\nu_T  (\cdot) = \nu (T^{-1} (\cdot))$ is transformed by $T(x) = \|x\|$. If the process $X_t = \int_{M_d^-}\int_{\real} f(A,t-s) \Lambda (dA, ds)$ exists (in the sense of Proposition \ref{prop:MMAexfinite}), then setting 
\[Z_t := \int_{M_d^-}\int_{\real} f' (A,t-s) \Lambda (dA, ds)\]
 we have
\begin{equation}X_t = X_0 + \int\limits_0^t Z_u\;du + (C_1 - C_2)\; L_t\label{eq:sde}\end{equation}
and consequently $X_t$ has sample paths in $\dd$ which are of finite variation on compacts.
\end{theorem}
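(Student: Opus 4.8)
The plan is to prove the integral identity \eqref{eq:sde} first and then read off the path properties, since every summand on its right-hand side is individually c\`adl\`ag and of finite variation on compacts. Indeed, $X_0$ is constant in $t$; the map $t\mapsto\int_0^t Z_u\,du$ is absolutely continuous (hence continuous and of locally finite variation) as soon as $u\mapsto Z_u$ is locally integrable; and $t\mapsto L_t$ is a finite-variation L\'evy process precisely because $\Sigma=0$ and $\int_{\|x\|\le1}\|x\|\,\nu(dx)<\infty$. Thus the whole statement reduces to establishing \eqref{eq:sde}, with the regularity of the paths of $X$ then inherited from the right-hand side.

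Because we are in the finite-variation regime, I would first use Proposition \ref{prop:MMAexfinite} to represent both $X_t$ and $Z_t$ as genuine Lebesgue integrals, against the drift $\gamma_0\,(\pi\times\lambda)$ and against the Poisson random measure $N$ of the L\'evy--It\^o decomposition. This is the decisive simplification: for each fixed $\omega$ the quantities $X_t(\omega)$ and $Z_t(\omega)$ become ordinary integrals with respect to a $\sigma$-finite measure, so that the later interchange of integrations can be carried out $\omega$ by $\omega$ with the classical Fubini--Tonelli theorem, avoiding any stochastic Fubini argument.

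The analytic core is a fundamental-theorem-of-calculus identity for the kernel. For fixed $A$ the map $v\mapsto f(A,v)$ is differentiable on each side of the origin and has one-sided limits $C_1$ and $C_2$ there; since its derivative is locally integrable (a consequence of the dominating-function hypothesis discussed below), the fundamental theorem of calculus applies on each side and, keeping track of the single jump at $0$, yields for $t>0$
\[ f(A,t-s)-f(A,-s)=\int_0^t f'(A,v-s)\,dv+(C_1-C_2)\,\id_{(0,t]}(s), \]
where the indicator records exactly those $s$ for which the argument $v-s$ sweeps through the origin as $v$ runs over $(0,t]$ (the case $t<0$ is symmetric). Integrating this identity against $\Lambda(dA,ds)$ splits $X_t-X_0$ into two pieces: in the first I would invoke Fubini--Tonelli to exchange $\int_0^t dv$ with the $\Lambda$-integral, turning it into $\int_0^t Z_v\,dv$; in the second the indicator integrates to $\Lambda(M_d^-\times(0,t])=L_t$, which is exactly the boundary term $(C_1-C_2)L_t$ of \eqref{eq:sde}.

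The main obstacle is to legitimize the interchange of integration and, even before that, to guarantee that $u\mapsto Z_u$ is almost surely locally integrable so that $\int_0^t Z_u\,du$ is well defined pathwise. Both are exactly what the hypothesis on the dominating function $\sup_{t\in[t_1,t_2]}\|f'(A,t-s)\|$ provides: requiring it to satisfy the conditions of Proposition \ref{prop:MMAexfinite} forces the finiteness of $\int_{t_1}^{t_2}\int_{M_d^-}\int_\real\int_{\reald}\bigl(1\wedge\|f'(A,v-s)x\|\bigr)\,\nu(dx)\,ds\,\pi(dA)\,dv$ together with the companion drift integral, and this is simultaneously the absolute integrability needed for Tonelli and the bound giving $\sup_{u\in[t_1,t_2]}\|Z_u\|<\infty$ almost surely. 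I would therefore first carry out the estimates on this dominating kernel, deduce local integrability of $Z$ and the admissibility of the interchange, and only then assemble the identity; the exceptional sets $\{s=0\}$ and $\{s=t\}$ are $\lambda$-null and play no role.
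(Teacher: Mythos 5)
Your proposal follows essentially the same route as the paper: represent $X$ and $Z$ pathwise as Lebesgue integrals via Proposition \ref{prop:MMAexfinite}, use the dominating--function hypothesis on $\sup_{t\in[t_1,t_2]}\|f'(A,t-s)\|$ both to get local (uniform) boundedness of $Z$ and to justify the $\omega$-wise Fubini interchange, and apply the fundamental theorem of calculus on each side of the kernel's discontinuity at the origin. The paper merely runs the computation in the opposite direction, starting from $\int_0^t Z_u\,du$, interchanging the integrals, and evaluating $\int_{0\vee s}^{t}$ resp.\ $\int_0^{t\wedge s}$ of $f'(A,u-s)$ to arrive at $X_t-X_0+(C_1-C_2)L_t$. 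One correction to your key identity: the jump of $v\mapsto f(A,v-s)$ at $v=s$ equals $f(A,0^+)-f(A,0^-)=C_2-C_1$, so the fundamental theorem of calculus with the jump term gives
\[ f(A,t-s)-f(A,-s)=\int_0^t f'(A,v-s)\,dv+(C_2-C_1)\,\id_{(0,t]}(s), \]
and hence $X_t=X_0+\int_0^t Z_u\,du+(C_2-C_1)\,L_t$. This agrees with the paper's final display $\int_0^t Z_u\,du=X_t-X_0+(C_1-C_2)L_t$ after rearranging, and with the Ornstein--Uhlenbeck sanity check $C_1=\mathbf{0}$, $C_2=I_d$, $dX_t=AX_t\,dt+dL_t$; equation \eqref{eq:sde} as printed (and the subsequent remark) appears to carry the opposite sign, so your version of the identity reproduces the printed formula but not the one the proof actually establishes.
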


\begin{proof}
Obviously the process $Z_t$ exists (in the sense of Proposition \ref{prop:MMAexfinite}).

We follow the steps of the proof of Theorem 3.12 in \cite{barndorff:stelzer:2011a} and begin by showing that $Z_t$ is locally uniformly bounded on compacts. Note that by Proposition \ref{prop:MMAexfinite} the processes $X_t$ and $Z_t$ can be given as integrals with respect to a Poisson measure and $\pi \times \lambda$. For $\delta > 0$ and every $t_1, t_2 \in \real$ such that $t_1 \leq t_2$ and $t_2 - t_1 \leq \delta$ we obtain
\begin{align*}
\sup\limits_{t \in [t_1, t_2]} \| Z_t \| &= \sup\limits_{t \in [t_1, t_2]} \Big\| \int_{M_d^-}\int_{\real} f' (A,t-s) \Lambda (dA, ds) \Big\| \\
& \leq \int_{M_d^-}\int_{\real}\; \sup\limits_{t \in [t_1, t_2]} \|f' (A,t-s)\| \Lambda_T (dA, ds),
\end{align*}
where $T: \reald \to \real$ is given by $T(x) = \|x\|$ and $\Lambda_T$ is the transformed L\'evy basis with characteristic triplet $(\|\gamma\|, 0, \nu_T  , \pi)$. Existence of the right hand side is covered by Proposition \ref{prop:MMAexfinite}. Thus $Z_t$ is locally uniformly bounded and it follows by Fubini that\enlargethispage{1cm}
\begin{align*}
\int\limits_0^t Z_u du&= \int\limits_0^t \int\limits_{M_d^-}\int\limits_{-\infty}^u f' (A,u-s) \Lambda (dA, ds)\; du + \int\limits_0^t \int\limits_{M_d^-}\int\limits_{u}^\infty f' (A,u-s) \Lambda (dA, ds)\; du\\
&= \int\limits_{M_d^-}\int\limits_{-\infty}^t \int\limits_{0\vee s}^t f' (A,u-s)\; du \, \Lambda (dA, ds)+  \int\limits_{M_d^-}\int\limits_{0}^\infty \int\limits_0^{t \wedge s} f' (A,u-s)\; du \, \Lambda (dA, ds)\\
&= \int\limits_{M_d^-}\int\limits_{-\infty}^t f (A,u-s)\Big|_{u = 0\vee s}^t \, \Lambda (dA, ds)+  \int\limits_{M_d^-}\int\limits_{0}^\infty f (A,u-s)\Big|_{u = 0}^{t \wedge s} \, \Lambda (dA, ds)\\
&= \int\limits_{M_d^-}\int\limits_{-\infty}^t f (A,t-s) \Lambda (dA, ds) - \int\limits_{M_d^-}\int\limits_{-\infty}^0 f (A,0-s) \Lambda (dA, ds)\\
&\quad - \int\limits_{M_d^-}\int\limits_{0}^t f (A,0^+) \Lambda (dA, ds) +  \int\limits_{M_d^-}\int\limits_{t}^\infty f (A,t-s) \Lambda (dA, ds)\\
&\quad + \int\limits_{M_d^-}\int\limits_{0}^t f (A,0^-) \Lambda (dA, ds) - \int\limits_{M_d^-}\int\limits_{0}^\infty f (A,0 - s) \Lambda (dA, ds)\\
&= X_t - X_0 + (C_1 - C_2)\; L_t.
\end{align*}
\end{proof}

\begin{remark}
\begin{enumerate}
 \item The inclusion of kernel functions with a discontinuity at $s = 0$ is motivated by the class of causal MMA processes where the kernel function is of the form $f(A,s) \id_{[0, \infty)} (s)$. For example, in the supOU case the kernel function is $e^{As}\, \id_{[0, \infty)} (s)$ and the limits at $s = 0$ can be given directly by $C_1 = \mathbf{0}$ and $C_2 = I_d$ yielding (see Theorem 3.12 in \cite{barndorff:stelzer:2011a})
\[X_t = X_0 + \int\limits_0^t Z_u\;du - L_t.\]
\item The result can obviously be extended to the case where $f(A,0^-)$ and $f(A,0^+)$ exist for all $A$, but are not independent of $A$. Then \eqref{eq:sde} holds with the L\'evy process $\tilde L_t:=\int\limits_{M_d^-}\int\limits_{0}^t (f (A,0^+)-f (A,0^-)) \Lambda (dA, ds)$ in place of $(C_1 - C_2)\; L_t$.
\end{enumerate}

\end{remark}

If $C_1 - C_2 = 0$ in the above theorem, further properties of the sample paths of $X_t$ follow directly.

\begin{corollary}
Assume that the conditions of Theorem \ref{thm:cadlagmma} hold. If additionally $C_1 = C_2$, then the paths of $X_t = \int_{M_d^-}\int_{\real} f(A,t-s) \Lambda (dA, ds)$ are absolutely continuous and almost everywhere differentiable.
\end{corollary}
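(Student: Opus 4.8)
The plan is to read off the statement directly from the decomposition \eqref{eq:sde} furnished by Theorem \ref{thm:cadlagmma}. All the hypotheses of that theorem are assumed to hold, so we have
\[X_t = X_0 + \int_0^t Z_u\, du + (C_1 - C_2)\, L_t, \qquad Z_u = \int_{M_d^-}\int_{\real} f'(A, u-s)\, \Lambda(dA, ds),\]
for $t \in [0,1]$. Imposing the extra assumption $C_1 = C_2$ kills the jump term $(C_1 - C_2)\, L_t$, leaving $X_t = X_0 + \int_0^t Z_u\, du$. Thus the entire task reduces to showing that an indefinite Lebesgue integral of $Z$ is absolutely continuous and differentiable almost everywhere.

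The key input is that $Z$ is, pathwise, a bounded measurable function on $[0,1]$. Indeed, the first part of the proof of Theorem \ref{thm:cadlagmma} establishes that for every subinterval $[t_1, t_2] \subseteq [0,1]$ with $t_2 - t_1 \leq \delta$ one has
\[\sup_{t \in [t_1, t_2]} \| Z_t \| \leq \int_{M_d^-}\int_{\real} \sup_{t \in [t_1, t_2]} \|f'(A, t-s)\|\, \Lambda_T(dA, ds) < \infty \quad \as,\]
where $\Lambda_T$ is the transformed L\'evy basis with triplet $(\|\gamma\|, 0, \nu_T, \pi)$. Covering the compact interval $[0,1]$ by finitely many subintervals of length at most $\delta$ and taking the maximum of the corresponding finite bounds yields $\sup_{t \in [0,1]} \|Z_t\| < \infty$ almost surely. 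Hence for almost every $\omega$ the map $u \mapsto Z_u(\omega)$ is bounded, and therefore Lebesgue integrable, on $[0,1]$.

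With pathwise integrability of $Z$ secured, both conclusions are standard real analysis applied to each fixed sample path. The indefinite integral $t \mapsto X_0 + \int_0^t Z_u\, du$ of an integrable function is absolutely continuous; and by the Lebesgue form of the fundamental theorem of calculus it is differentiable at every Lebesgue point of $Z$, hence for $\lambda$-almost every $t$, with derivative $X_t' = Z_t$ there. I would not expect any genuine obstacle: the substance lives entirely in \eqref{eq:sde} and in the local boundedness of $Z$, both already provided by Theorem \ref{thm:cadlagmma}. The only point requiring a moment's care is that absolute continuity and almost-everywhere differentiability are assertions about individual sample paths, so one must confirm that $Z$ is integrable for almost every fixed $\omega$ before invoking the scalar-valued (applied componentwise) theorems above, which is exactly what the displayed local boundedness estimate guarantees.
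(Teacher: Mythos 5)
Your argument is correct and is exactly the route the paper intends: the corollary is stated as an immediate consequence of Theorem \ref{thm:cadlagmma}, since with $C_1=C_2$ the representation \eqref{eq:sde} reduces to $X_t=X_0+\int_0^t Z_u\,du$ with $Z$ pathwise locally (hence, on $[0,1]$, globally) bounded, and the Lebesgue fundamental theorem of calculus gives absolute continuity and almost-everywhere differentiability. Your added care about pathwise integrability of $Z$ before invoking the componentwise real-analysis facts is exactly the right point to check, and it is supplied by the local uniform boundedness established in the proof of the theorem.
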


\begin{remark}
The condition $C_1 = C_2$ holds if and only if $f(A,s)$ is continuous in $s = 0$ and $f(A,0)$ is constant for all $A \in M_d^-$. This is satisfied, for example, by two-sided supOU processes which are MMA processes with kernel function
\[f(A,s) = e^{As} \id_{[0,\infty)} (s) + e^{-As} \id_{(-\infty, 0)} (s).\]
 In the case of moving average processes, where $\pi$ is a one-point measure, the condition only requires that $f$ is continuous in $s = 0$. Processes of this class include, for example, two-sided CARMA and two-sided Ornstein-Uhlenbeck processes.
\end{remark}

Similar results for the sample paths of MMA processes, where the driving L\'evy process is not of finite variation, are in general not so easy to obtain. \cite{basse:pedersen:2009}, Corollary 3.3, give necessary and sufficient conditions for filtered L\'evy processes of the form \eqref{eqn:filteredlevy} to have c\`adl\`ag sample paths of bounded variation even if the driving L\'evy process itself has sample paths of unbounded variation. Furthermore, they also study \notiono{two-sided moving averages} of the form
\[X_t = \int\limits_{-\infty}^t (f_1 (t - s) - f_2 (-s))\; d L_t,\]
where $f_1, f_2: \real \to \real$ are measurable kernel functions such that $f_1 (s) = f_2 (s) = 0$ for all $s \in (-\infty, 0)$. They give necessary and sufficient conditions for such processes to have c\`adl\`ag sample paths of finite variation. These conditions also allow the underlying L\'evy process to be of infinite variation. For MMA processes \cite{BasseRosinski2012} gives necessary and sufficient conditions for finite variation and absolute continuity of sample paths for underlying L\'evy processes of infinite variation. Moreover, \cite{basse:pedersen:2009} also consider the special case where the driving L\'evy process is symmetric $\alpha$-stable with $\alpha \in (1,2]$ (cf.\ \cite{basse:pedersen:2009}, Lemma 5.2, Proposition 5.3 and Proposition 5.5). Conditions for $\alpha$-stable MMA processes, $\alpha \in (0,2)$, to have c\`adl\`ag sample paths are also given in \cite{basse:rosinski:2011}, Section 4.\par
Additionally, there also exist some results for the stronger property of continuous sample paths. See \cite{marcus:rosinski:2005}, \cite{cambanis:nolan:rosinski:1990} and \cite{rosinski:1989} for results on general MMA processes to have continuous sample paths. For the special case of $\alpha$-stable MMA processes, see also \cite{rosinski:samorodnitsky:taqqu:1991} and \cite{rosinski:1986}.

\section{Functional Regular Variation}\label{sec:dd:funcregvar}

We follow \cite{hult:lindskog:2005} to introduce the notion of regular variation on $\dd$. Let $\dd$ be the space of c\`adl\`ag\index{C\`adl\`ag functions} (right-continuous with left limits) functions $x : [0,1] \to \reald$ equipped with the $J_1$ metric (equivalent to the $d_0$ metric of \cite{billingsley:1968}) such that $\dd$ is a complete and separable metric space. Using the supremum norm\index{Supremum norm} $\|x\|_\infty = \sup_{t \in [0,1]} \|x_t\|$ we can then introduce the unit sphere $\sdd = \{x \in \dd: \|x\|_\infty = 1\}$, equipped with the relativized topology of $\dd$. Next, we equip $(0, \infty ]$ with the metric $\rho (x,y) = |1/x - 1/y|$ which makes it a complete separable metric space. Then also the space $\ddcon = (0, \infty ] \times \sdd$, equipped with the metric $\max \{\rho (x^*,y^*), d_0(\schl{x}, \schl{y})\}$, is a complete separable metric space.\par
If we use the polar coordinate transformation $T: \dd \wo \{0\} \to \ddcon,$ $x \mapsto (\|x\|_\infty, x / \|x\|_\infty ),$ we see that the spaces $\dd \wo \{0\}$ and $(0, \infty ) \times \sdd$ are homeomorphic. Thus, the Borel sets $\borel (\ddcon )$ of interest can be viewed as the infinite dimensional extension of the one-point uncompactification\index{One-point uncompactification} that is used to introduce finite dimensional regular variation (cf.\ \cite{bingham:goldie:teugels:1987}, \cite{embrechts:klueppelberg:mikosch:1997} and \cite{resnick:1987}).\par
Regular Variation on $\dd$ can then be introduced in terms of the so-called $\what$-convergence of boundedly finite measures on $\ddcon$. A measure $\mu$ on a complete separable metric space $E$ is said to be \notionOI{boundedly finite}\index{Boundedly finite} if $\mu(B) < \infty$ for every bounded set $B \in \borel (E)$. Let $(\mu_n)_{n \in \nat}$ be a sequence of boundedly finite measures on $E$. Then $(\mu_n)$ converges to $\mu$ in the $\what$\notionOI{-topology}\index{Weak hat topology} if $\mu_n(B) \to \mu(B)$ for all bounded Borel sets $B \in \borel (E )$ with $\mu(\partial B) = 0$. We write $\mu_n \tow \mu$. Note that for locally compact spaces $E$ the boundedly finite measures are called Radon measures and the notions of $\what$-convergence and vague convergence coincide. See \cite{daley:verejones:1988} and \cite{kallenberg:1983} for details on $\what$-convergence and vague convergence.\par
We are now able to formulate regular variation for stochastic processes with sample paths in $\dd$.

\begin{definition}[Regular Variation on $\dd$]\label{def:dd:regvarD}
A stochastic process $(X_t)$, $t \in [0,1]$, with sample paths in $\dd$ is said to be \notionOI{regularly varying} if there exists a positive sequence $(a_n)$, $n \in \nat$, with $a_n \nearrow \infty$ and a nonzero boundedly finite measure $\mu$ on $\borel (\ddcon)$ with $\mu(\ddcon \wo \dd) = 0$ such that, as $n \to \infty$,
\[n P (a_n^{-1} X \in \cdot) \tow \mu( \cdot ) \quad \mbox{ on } \borel (\ddcon).\]
\end{definition}

As in the finite dimensional case, direct calculation shows that the measure $\mu$ is homogeneous, i.e.\ there exists a positive index $\alpha > 0$ such that $\mu(u B) = u^{-\alpha} \mu(B)$ for all $u > 0$ and for every $B \in \borel (\ddcon)$. Thus, we say that the process $(X_t)$ is \notiono{regularly varying with index} $\alpha$ and write $X \in \rvdd$.\par
Analogously to multivariate regular variation, several alternative definitions of regular variation on $\dd$ exist, we will just state one example here.

\begin{theorem}[\cite{hult:lindskog:2005}, Theorem 4]
A process $(X_t)$ with sample paths in $\dd$ is regularly varying if and only if there exists an index $\alpha > 0$ and a probability measure $\sigma$ on $\borel (\sdd)$ such that for every positive $x > 0$, as $u \to \infty$, 
\[\frac{P( \|X\|_\infty > ux, X / \|X\|_\infty \in \cdot)}{P(\|X\|_\infty > u)} \tow x^{-\alpha} \sigma (\cdot) \quad \mbox{ on } \borel (\sdd).\]
\end{theorem}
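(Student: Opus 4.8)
The plan is to prove the two implications separately, in both cases exploiting the homeomorphism $T: \dd \wo \{0\} \to (0,\infty) \times \sdd$, $x \mapsto (\|x\|_\infty, x/\|x\|_\infty)$, together with the fact that the limit measure of regular variation is homogeneous. The central structural fact I would establish first is that homogeneity forces a product decomposition after the polar change of variables: if $\mu$ is a nonzero boundedly finite measure on $\borel(\ddcon)$ with $\mu(\ddcon \wo \dd)=0$ and $\mu(uB)=u^{-\alpha}\mu(B)$ for all $u>0$, then, since the scaling $x\mapsto ux$ corresponds to $(r,\theta)\mapsto(ur,\theta)$ under $T$, the pushforward $\mu\circ T^{-1}$ on $(0,\infty]\times\sdd$ factorizes as $\nu_\alpha\otimes\sigma$, where $\nu_\alpha((x,\infty])=x^{-\alpha}$ and $\sigma$ is a finite measure on $\borel(\sdd)$ that we may normalize to a probability measure by rescaling $\mu$ (equivalently by the choice of $(a_n)$). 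This is the infinite-dimensional analogue of the classical polar decomposition and is the engine behind the equivalence.

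For the implication from regular variation to the polar condition, I would start from $nP(a_n^{-1}X\in\cdot)\tow\mu$ and read off the factorization $\mu\circ T^{-1}=\nu_\alpha\otimes\sigma$ described above. Applying the convergence to the radial set $\{x:\|x\|_\infty>1\}$ shows that $u\mapsto P(\|X\|_\infty>u)$ is regularly varying with index $-\alpha$, which lets one replace the integer sequence $(a_n)$ by a continuous parameter $u\to\infty$ through the usual monotonicity and interpolation argument. Writing the conditional numerator in the limit as the $\mu$-content of the polar box $T^{-1}((x,\infty]\times\cdot)$ and dividing by $P(\|X\|_\infty>u)$, the product structure of $\mu\circ T^{-1}$ yields precisely $x^{-\alpha}\sigma(\cdot)$, with $\sigma$ the normalized angular part of $\mu$.

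For the converse, I would define $(a_n)$ through $nP(\|X\|_\infty>a_n)\to 1$, which is possible because taking the whole sphere $\sdd$ in the polar condition gives $P(\|X\|_\infty>ux)/P(\|X\|_\infty>u)\to x^{-\alpha}$, i.e.\ regular variation of the scalar tail. I would then take $\mu$ to be the pushforward of $\nu_\alpha\otimes\sigma$ under $T^{-1}$ and verify $nP(a_n^{-1}X\in\cdot)\tow\mu$. Because $\ddcon$ is not locally compact, $\what$-convergence of boundedly finite measures must be used in place of vague convergence; the verification is carried out on the class of polar boxes $\{x:\|x\|_\infty>r,\ x/\|x\|_\infty\in S\}$ with $r>0$ and $S$ a $\sigma$-continuity set, which are bounded in $\ddcon$, closed under intersection, and convergence-determining for $\what$-convergence (cf.\ \cite{daley:verejones:1988}). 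On such a box the assumed polar convergence together with the tail normalization gives $nP(a_n^{-1}X\in\{\|x\|_\infty>r,\ x/\|x\|_\infty\in S\})\to r^{-\alpha}\sigma(S)$, matching the $\mu$-measure of the box, and a Portmanteau/$\pi$-system argument upgrades this to $\what$-convergence on all bounded continuity sets.

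The main obstacle, and the reason the infinite-dimensional statement is genuinely harder than its finite-dimensional ancestor, is precisely the failure of local compactness of $\sdd$ and hence of $\ddcon$: one cannot invoke vague convergence and relative compactness of sets bounded away from $0$, and must instead work throughout with boundedly finite measures and the weaker $\what$-topology. Consequently the two delicate points are, first, proving that the polar boxes form a convergence-determining class for $\what$-convergence on the non-locally-compact space $\ddcon$, so that checking convergence box by box actually forces the measure convergence in Definition \ref{def:dd:regvarD}; and second, controlling the angular mass near the truncation $\{\|x\|_\infty=r\}$ so that $S$ can be taken as a $\sigma$-continuity set and the boundary hypothesis $\mu(\partial B)=0$ is met. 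Everything else, namely the passage from homogeneity to the product form and the discrete-to-continuous replacement of the normalization, is routine once these topological subtleties are resolved.
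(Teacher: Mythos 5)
The paper does not prove this statement itself; it is quoted verbatim from Hult and Lindskog (2005), Theorem~4, and your outline coincides with the strategy of that source: polar factorization of the homogeneous limit measure into $\nu_\alpha\otimes\sigma$, regular variation of the radial tail $u\mapsto P(\|X\|_\infty>u)$ to pass between the sequential normalization $(a_n)$ and the continuous parameter $u$, and verification of $\what$-convergence on polar boxes. Your two flagged ``delicate points'' are handled exactly as you suggest there --- in particular, homogeneity together with bounded finiteness forces $\mu(\{x:\|x\|_\infty=r\})=0$ for every $r>0$, so the truncation sets are automatically continuity sets --- and the outline contains no gap.
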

The probability measure $\sigma$ is called the \notionOI{spectral measure} of $X$. \par

\begin{example}[L\'evy Process]\label{ex:regvarLevyD}
Let $(L_t)$ be a L\'evy process. Then by definition (or Theorem 11.5 in \cite{sato:2002} resp.) $(L_t)$ has sample paths in $\dd$. Furthermore, $(L_t)$ is also a strong Markov process (cf.\ \cite{sato:2002}, Theorem 10.5 and Corollary 40.11). Now the results of \cite{hult:lindskog:2005}, Section 3, can be applied. If $L_t \in \rvan[t \mu]$ for one and thus all $t>0$, then it follows by Theorem 13 of \cite{hult:lindskog:2005} that $(L_t) \in \rvdd[\schl{\mu}]$ for some measure $\schl{\mu}$. For details we refer to \cite{hult:lindskog:2005}, Example 17.
\end{example}

We will now recall some useful results from \cite{hult:lindskog:2005} related to regular variation on $\dd$. Since it is often of interest, how the regular variation property is preserved under mappings, we look at a continuous mapping theorem. Therefore, for any function $h$ from a metric space $E$ to a metric space $E'$ we introduce the set $disc(h)$ which consists of all discontinuities of $h$.

\begin{theorem}[\cite{hult:lindskog:2005}, Theorem 6]\label{thm:dd:contmapD}
Let $(X_t)$ be a stochastic process with sample paths in $\dd$ and let $E'$ be a complete separable metric space. Assume that $X \in \rvdd$ and $h: \dd \to E'$ is a measurable mapping such that $\mu (disc(h)) = 0$ and $h^{-1} (B)$ is bounded in $\ddcon$ for every bounded $B \in \borel (E')$. Then, as $n \to \infty$,
\[n P (h(a_n^{-1} X) \in \cdot) \tow \mu \circ h^{-1} (\cdot) \quad \mbox{on } \borel (E').\]
\end{theorem}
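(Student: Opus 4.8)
The plan is to read this as a continuous mapping theorem for $\what$-convergence of boundedly finite measures and to adapt the classical Billingsley argument to that setting. Write $\mu_n := n P(a_n^{-1} X \in \cdot)$, so that $X \in \rvdd$ means precisely $\mu_n \tow \mu$ on $\borel(\ddcon)$, and note the elementary identity $\mu_n \circ h^{-1}(\cdot) = n P(h(a_n^{-1} X) \in \cdot)$. The claim thus reduces to showing $\mu_n \circ h^{-1} \tow \mu \circ h^{-1}$ on $\borel(E')$. First I would check that the image measures are boundedly finite: for bounded $B \in \borel(E')$ the hypothesis gives that $h^{-1}(B)$ is bounded in $\ddcon$, and since $\mu$ (resp. each $\mu_n$) is boundedly finite, $(\mu \circ h^{-1})(B) = \mu(h^{-1}(B)) < \infty$. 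Hence $\mu \circ h^{-1}$ and the $\mu_n \circ h^{-1}$ are genuine boundedly finite measures on $E'$, and it makes sense to speak of their $\what$-convergence.

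The central tool is the Portmanteau characterization of $\what$-convergence (cf.\ \cite{daley:verejones:1988}): for boundedly finite measures, $\nu_n \tow \nu$ holds if and only if $\nu_n(B) \to \nu(B)$ for every bounded Borel set $B$ with $\nu(\partial B) = 0$. So it suffices to fix a bounded $B \in \borel(E')$ with $(\mu \circ h^{-1})(\partial B) = 0$ and prove $\mu_n(h^{-1}(B)) \to \mu(h^{-1}(B))$. The idea is to transfer the continuity-set property of $B$ under $\mu \circ h^{-1}$ into a continuity-set property of $h^{-1}(B)$ under $\mu$, so that the already-known convergence $\mu_n \tow \mu$ can be applied directly to the bounded set $h^{-1}(B)$.

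The key step is the purely topological inclusion
\[\partial\big(h^{-1}(B)\big) \subseteq disc(h) \cup h^{-1}(\partial B).\]
To see this, take $x \in \partial(h^{-1}(B))$ with $x \notin disc(h)$, i.e.\ $h$ continuous at $x$; choosing sequences $x_k \to x$ in $h^{-1}(B)$ and $y_k \to x$ in its complement, continuity at $x$ yields $h(x_k) \to h(x)$ and $h(y_k) \to h(x)$, so $h(x) \in \bar{B} \cap \bar{B^c} = \partial B$, whence $x \in h^{-1}(\partial B)$. Given this inclusion, the hypotheses $\mu(disc(h)) = 0$ and $\mu(h^{-1}(\partial B)) = (\mu \circ h^{-1})(\partial B) = 0$ combine to give $\mu(\partial(h^{-1}(B))) = 0$; together with the boundedness of $h^{-1}(B)$ this makes $h^{-1}(B)$ a bounded $\mu$-continuity set. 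Applying the Portmanteau characterization to $\mu_n \tow \mu$ then gives $\mu_n(h^{-1}(B)) \to \mu(h^{-1}(B))$, i.e.\ $(\mu_n \circ h^{-1})(B) \to (\mu \circ h^{-1})(B)$, and since $B$ was an arbitrary bounded continuity set of $\mu \circ h^{-1}$, the converse direction of Portmanteau delivers $\mu_n \circ h^{-1} \tow \mu \circ h^{-1}$.

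The main obstacle I anticipate is bookkeeping around the bounded-set structure intrinsic to $\what$-convergence rather than any deep difficulty: one must take care to use the Portmanteau theorem in its boundedly finite form, and to invoke the hypothesis ``$h^{-1}(B)$ bounded for bounded $B$'' in two distinct roles, namely to guarantee bounded finiteness of the image measures and to ensure that $h^{-1}(B)$ falls within the class of sets on which the given convergence $\mu_n \tow \mu$ is informative. The topological lemma itself is elementary, so the genuine content is the verification that both the continuity-set property and boundedness survive passage through $h^{-1}$ under the stated hypotheses.
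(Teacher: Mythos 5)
This statement is quoted verbatim from \cite{hult:lindskog:2005} (their Theorem 6) and the paper gives no proof of it, so there is nothing internal to compare against. Your argument is the standard continuous mapping proof for $\what$-convergence and it is correct: the reduction to the Portmanteau form (which is in fact the paper's \emph{definition} of $\what$-convergence), the topological inclusion $\partial(h^{-1}(B)) \subseteq disc(h) \cup h^{-1}(\partial B)$, and the dual use of the hypothesis that preimages of bounded sets are bounded are exactly the ingredients one needs; the only bookkeeping point you gloss over is that the boundary is taken in $\ddcon$ while $h$ lives on $\dd$, which is harmless since $\mu$ and every $n P(a_n^{-1}X \in \cdot)$ assign zero mass to $\ddcon \wo \dd$.
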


There also exists a different version of the previous theorem for the special case of positively homogeneous mappings of order $\gamma >0$, i.e.\ mappings $h: \dd \to \dd$ with $h(\lambda x) = \lambda^\gamma h(x)$ for all $\lambda \geq 0$ and $x \in \dd$. See \cite{hult:lindskog:2005}, Theorem 8, for details.\par

%\begin{theorem}[\cite{hult:lindskog:2005}, Theorem 8]\label{thm:dd:contmaphomoD}
%Let $(X_t)$ be a stochastic process with sample paths in $\dd$ and let $X \in \rvdd$. Furthermore, suppose that $h: \dd \to \dd$ is a measurable mapping which is positively homogeneous of order $\gamma > 0$ such that $\mu (disc(h) \cap \ddcon) = 0$ and $h^{-1} (B)$ is bounded in $\ddcon$ for every bounded $B \in \borel (\ddcon) \cap \dd$. Then, as $n \to \infty$,
%\[n P (a_n^{-\gamma} h(X) \in \cdot) \tow \mu \circ h^{-1} (\cdot \cap \dd) \quad \mbox{on } \borel (\ddcon).\]
%\end{theorem}

The next theorem states some necessary and sufficient conditions for regular variation on $\dd$. In the theorem, we use the notation
\begin{align*}
w(x, T_0) &:= \sup\limits_{t_1, t_2 \in T_0} \; \|x_{t_1} - x_{t_2}\|\quad\mbox{ and}\\ 
w''(x, \delta) &:= \sup\limits_{0\leq t_1 \leq t \leq t_2\leq 1;\; t_2 - t_1 \leq \delta}\; \min \; \{ \|x_{t} - x_{t_1}\|, \|x_{t_2} - x_{t}\| \}
\end{align*}
for $x \in \dd$, $T_0 \subseteq [0,1]$ and $\delta \in [0,1]$.

\begin{theorem}[\cite{hult:lindskog:2005}, Theorem 10]\label{thm:dd:charregvarD}
Let $(X_t)$ be a stochastic process with sample paths in $\dd$. Then the following statements are equivalent.
\begin{list}{}{\setlength{\leftmargin}{0.8cm}\setlength{\labelwidth}{0.7cm}\setlength{\labelsep}{0.1cm}}
	\item[(i)] $X \in \rvdd$.
	\item[(ii)] There exists a set $T \subseteq [0,1]$ containing $0$, $1$ and all but at most countably many points of $[0,1]$, a positive sequence $a_n \nearrow \infty$ and a collection $\{ \mu_{t_1, \ldots, t_k} : t_i \in T, k \in \nat \}$ of Radon measures on $\borel (\bar{\real}^{dk} \wo \{0\})$ with $\mu_{t_1, \ldots, t_k} (\bar{\real}^{dk} \wo \real^{dk}) = 0$ and $\mu_t$ is nonzero for some $t\in T$ such that
\begin{equation}
n P (a_n^{-1} ( X_{t_1}, \ldots, X_{t_k} ) \in \cdot) \tov \mu_{t_1, \ldots, t_k} (\cdot)  \quad \mbox{on } \borel (\bar{\real}^{dk} \wo \{0\})
%\label{eq:}
\end{equation}
holds for all $t_1, \ldots, t_k \in T$. Furthermore, for every $\varepsilon, \eta > 0$, there exist $\delta \in (0,1)$ and $n_0 \in \nat$ such that, for all $n \geq n_0$,
\begin{align}
n P ( a_n^{-1} w(X, [0, \delta)) \geq \varepsilon) &\leq \eta \label{eqn:dd:relcomp1},\\
n P ( a_n^{-1} w(X, [1 - \delta, 1)) \geq \varepsilon) &\leq \eta \label{eqn:dd:relcomp2},\\
n P ( a_n^{-1} w''(X, \delta) \geq \varepsilon) &\leq \eta. \label{eqn:dd:relcomp3}
\end{align}
\end{list}
\end{theorem}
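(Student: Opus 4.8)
The plan is to prove the two implications by a functional regular-variation analogue of the classical characterisation of weak convergence in Skorokhod space (cf.\ \cite{billingsley:1968}, Chapter 3): finite-dimensional convergence together with tightness. Throughout write $m_n := n P(a_n^{-1} X \in \cdot)$ for the boundedly finite measures on $\ddcon$ whose $\what$-limit is at stake. The forward implication (i) $\Rightarrow$ (ii) rests on the continuous mapping theorem, Theorem \ref{thm:dd:contmapD}, applied to coordinate projections, whereas the reverse implication (ii) $\Rightarrow$ (i) is a relative-compactness argument in the $\what$-topology, in which the tightness bounds \eqref{eqn:dd:relcomp1}--\eqref{eqn:dd:relcomp3} force compactness and the finite-dimensional data single out the limit.

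For (i) $\Rightarrow$ (ii), I would first obtain the finite-dimensional convergence by feeding the projections $p_{t_1,\dots,t_k}\colon \dd \to \bar{\real}^{dk}$, $x \mapsto (x_{t_1},\dots,x_{t_k})$, into Theorem \ref{thm:dd:contmapD}. Such a projection is continuous at every path continuous at each $t_i$, so one lets $T$ consist of $0$, $1$ and all times carrying no $\mu$-mass of paths with a jump there; only countably many times are excluded, since on each set bounded away from the origin $\mu$ is finite. After checking the boundedness hypothesis on $p_{t_1,\dots,t_k}^{-1}(B)$, Theorem \ref{thm:dd:contmapD} delivers the vague limits $\mu_{t_1,\dots,t_k}$. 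For the tightness bounds, note that a path with $w(x,[0,\delta)) \geq \varepsilon$ has two points in $[0,\delta)$ differing by at least $\varepsilon$, hence $\|x\|_\infty \geq \varepsilon/2$; thus $\{x : w(x,[0,\delta)) \geq \varepsilon\}$ is bounded away from $0$, has finite $\mu$-mass, and --- by right-continuity of \cadlag\ paths at $0$ --- this mass tends to $0$ as $\delta \downarrow 0$. The portmanteau theorem for $\what$-convergence (\cite{daley:verejones:1988}) then yields \eqref{eqn:dd:relcomp1}; the same reasoning at $1$, using the existence of left limits, gives \eqref{eqn:dd:relcomp2}, and the standard property that $w''(x,\delta) \to 0$ as $\delta \downarrow 0$ for every $x \in \dd$ gives \eqref{eqn:dd:relcomp3}.

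The substantial direction is (ii) $\Rightarrow$ (i), which I would organise in three steps. First, relative compactness of $(m_n)$ in the $\what$-topology: since the bounded subsets of $\ddcon$ are exactly those bounded away from the origin, it suffices to establish, on each shell $\{x : \|x\|_\infty \geq r\}$, both uniformly bounded total mass and uniform tightness. The mass bound $\sup_n n P(\|X\|_\infty \geq a_n r) < \infty$ follows by approximating $\|X\|_\infty$ by a maximum over a finite grid in $T$ and absorbing the remainder via the moduli, using the finite-dimensional convergence; uniform tightness then comes from the Arzel\`a--Ascoli-type characterisation of relatively compact subsets of $(\dd, J_1)$ (cf.\ \cite{billingsley:1968}), whose oscillation requirement is supplied precisely by \eqref{eqn:dd:relcomp1}--\eqref{eqn:dd:relcomp3}. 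Second, identification: continuity of the projections on a set of full measure shows that any $\what$-limit $m$ along a subsequence satisfies $m \circ p_{t_1,\dots,t_k}^{-1} = \mu_{t_1,\dots,t_k}$ for all $t_i \in T$. Third, uniqueness: as $T$ is dense and contains $0$ and $1$, the finite-dimensional projections over $T$ form a determining class for boundedly finite measures on $\ddcon$, so $m$ is unique; calling it $\mu$, it is boundedly finite, homogeneous of some index $\alpha$ by the scaling structure, and --- since the $\mu_{t_1,\dots,t_k}$ carry no mass at infinity and the shells have uniformly bounded mass --- satisfies $\mu(\ddcon \wo \dd) = 0$. Thus $m_n \tow \mu$, which is exactly Definition \ref{def:dd:regvarD}.

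The \textbf{main obstacle} is the relative-compactness step of the reverse implication. Unlike the classical weak-convergence setting, the $m_n$ are boundedly finite rather than probability measures, so Prokhorov's theorem must be invoked shell by shell on sets bounded away from the origin and the resulting local limits patched together consistently; the delicate part is to convert the three modulus conditions \eqref{eqn:dd:relcomp1}--\eqref{eqn:dd:relcomp3} into the $J_1$ compactness criterion uniformly in $n$, and to exclude an escape of mass to the adjoined sphere $\ddcon \wo \dd$ at infinite radius. Verifying that finite-dimensional projections over a countable dense $T$ really do determine a boundedly finite measure on $\ddcon$ --- the backbone of the uniqueness step --- is the second point demanding care, and is handled by a $\pi$-system/monotone-class argument exploiting the polar-coordinate structure of $\ddcon$.
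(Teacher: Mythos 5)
The paper does not prove this theorem; it is quoted verbatim from Hult and Lindskog (2005), Theorem 10, and used as a black box, so there is no in-paper argument to compare against. Your sketch follows the route of the original proof --- finite-dimensional convergence via the continuous mapping theorem applied to projections at times carrying no $\mu$-mass of jumps, and the converse via relative compactness of $nP(a_n^{-1}X\in\cdot)$ in the $\what$-topology on sets bounded away from the origin, with the limit identified through its finite-dimensional projections over the dense set $T$ --- and the architecture is sound. The only points you pass over that genuinely need care are standard: the functionals $x\mapsto w(x,[0,\delta))$ and $x\mapsto w''(x,\delta)$ are not $J_1$-continuous, so the portmanteau step requires restricting to the (all but countably many) values of $\delta$ for which the corresponding sets are $\mu$-continuity sets, and the uniform mass bound on shells needs the explicit grid inequality controlling $\|x\|_\infty$ by a finite maximum over $T$ plus the three oscillation moduli.
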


\begin{remark}
The theorem links regular variation of the process $(X_t)_{t\in [0,1]}$ with sample paths in $\dd$ to regular variation of the finite dimensional distributions $( X_{t_1}, \ldots, X_{t_k} )$ of the the process. Key to that connection are the relative compactness criteria \eqref{eqn:dd:relcomp1}, \eqref{eqn:dd:relcomp2} and \eqref{eqn:dd:relcomp3} which restrict the oscillation of the process $(X_t)$ in small areas. See \cite{hult:lindskog:2005}, Example 11, for a process satisfying conditions \eqref{eqn:dd:relcomp1} and \eqref{eqn:dd:relcomp2}, but not \eqref{eqn:dd:relcomp3}.
\end{remark}

Now we will extend the finite dimensional regular variation of MMA processes in the sense of Theorem \ref{thm:dd:regvarMMA} to regular variation in $\dd$ by applying Theorem \ref{thm:dd:charregvarD}. Therefore, we need to restrict the MMA process $(X_t)$ as defined in Section \ref{sec:dd:prelim:mma} to the time interval $[0,1]$. Note that a restriction to any other compact time interval $[a,b]$, $a < b$, would not change any of the results. Furthermore, we assume that $(X_t)$ has sample paths in the space $\dd$ of c\`adl\`ag functions. See Section \ref{sec:dd:sample} for possible conditions ensuring this. We start with the main theorem for functional regular variation of MMA processes.

In order not to overload the notation we from now on assume always that $t,t_1, t_2$ are restricted to the set $[0;1]$ when taking suprema without writing this explicitly. Furthermore, we are now using the decomposition \eqref{eq:lam2} of our L\'evy basis $\Lambda$ into $\Lambda^{(1)}$ and $\Lambda^{(2)}$.

\begin{theorem}\label{thm:funcregvarmma}
Let $\Lambda$  be an $\reald$-valued L\'evy bases on $M_d^- \times \real$ with generating quadruple $(\gamma, \Sigma, \nu, \pi)$  such that $\nu \in \rvan[\mu_\nu]$. Assume that the kernel function $f$ is bounded, $f \in \mathbb{L}^\alpha (\lambda \times \pi)$, $\mu_\nu ( f^{-1} (A,s) (\real^n \wo \{0\}) ) = 0$ does not hold for $\pi \times \lambda$ almost-every $(A,s)$ and
\begin{equation}\label{eqn:xt2ex}
\int \limits_{M_d^-} \int \limits_{\real} \int \limits_{\|x\| > 1} (1 \wedge \| f(A, s) x\|) \,\nu (dx)\,ds\,\pi (dA) < \infty.
\end{equation}
Moreover, suppose that the MMA process $X_t = \int_{M_d^-}\int_{\real} f(A,t-s) \Lambda (dA, ds)$ exists for $t \in [0,1]$ (in the sense of Theorem \ref{thm:dd:MMAex}) and that the processes $X_t$ and $X_t^{(2)} = \int_{M_d^-}\int_{\real} f(A,t-s) \Lambda^{(2)} (dA, ds)$ have c\`adl\`ag sample paths. If the function $f_\delta$ given by
\begin{equation}\label{eqn:fdelta}
f_{\delta} (A,s) := \sup\limits_{t_1 \leq t_2;\; t_2 - t_1 \leq \delta} \| f(A, t_2 - s) - f(A, t_1 - s) \|\; \id_{(t_1,t_2]^c}\, (s)
\end{equation}
satisfies \eqref{eqn:xt2ex} and, as $\delta \to 0$, 
\begin{equation}\label{eqn:funcregvarmmabed}
\int\limits_{M_d^-} \int\limits_{\real} f_{\delta} (A,s)^\alpha\, d s \pi (d A) \to 0,
\end{equation}
%If, additionally, the function $f_d$ be given by
%\begin{equation}\label{eqn:fd}
%f_d (A,s) := \sup_{t \in [0,1]}\; \Big\| \frac{d}{d t} f(A, t - s) \Big\|\; \id_{[0,1]^c} (s)
%\end{equation}
%satisfies $f_d \in \mathbb{L}^2 \cap \mathbb{L}^{\alpha + \xi}$ for some $\xi > 0$,
then 
\[(X_t)_{t \in [0,1]} \in \rvdd ,\]
where $\mu$ is uniquely determined by the measures $\mu_{t_1, \ldots, t_k}$ in Theorem \ref{thm:dd:regvarMMA}.
\end{theorem}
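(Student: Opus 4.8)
The plan is to verify the three conditions of Theorem~\ref{thm:dd:charregvarD}(ii). Regular variation of the finite-dimensional distributions is already supplied by Theorem~\ref{thm:dd:regvarMMA}, so the main work lies entirely in establishing the relative compactness estimates \eqref{eqn:dd:relcomp1}, \eqref{eqn:dd:relcomp2} and \eqref{eqn:dd:relcomp3}. The key idea I would use is the decomposition $\Lambda = \Lambda^{(1)} + \Lambda^{(2)}$ into the part with small/compensated jumps and a Gaussian drift part, $\Lambda^{(1)}$, and the large-jump part, $\Lambda^{(2)}$, which is a pure compound-Poisson-type integral over $\{\|x\| > 1\}$. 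The driving force of heavy-tailed (regularly varying) behavior sits entirely in $\Lambda^{(2)}$, while $X_t^{(1)} = \int_{M_d^-}\int_{\real} f(A,t-s) \Lambda^{(1)}(dA,ds)$ has all moments of order up to $\alpha$ finite and hence contributes negligibly to probabilities of the form $nP(a_n^{-1}(\cdots) \geq \varepsilon)$. So I would split each oscillation functional along this decomposition and show that the $\Lambda^{(1)}$-contribution vanishes after scaling while the $\Lambda^{(2)}$-contribution is controlled by the hypotheses \eqref{eqn:xt2ex} and \eqref{eqn:funcregvarmmabed}.

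First I would treat \eqref{eqn:dd:relcomp1} and \eqref{eqn:dd:relcomp2}. Since $X$ is stationary and has \cadlag{} paths, both $w(X,[0,\delta))$ and $w(X,[1-\delta,1))$ are controlled by $\sup_{t\in[0,\delta]}\|X_t - X_0\|$ in distribution, and the right-continuity of the paths forces this oscillation to $0$ a.s.\ as $\delta \to 0$. The quantitative scaled estimate would then follow from a regular-variation / potential-type bound: writing the increment $X_t - X_0 = \int\int (f(A,t-s)-f(A,0-s))\Lambda(dA,ds)$, the supremum over $t \in [0,\delta]$ is dominated by an integral against the transformed basis $\Lambda_T$ (as in the proof of Theorem~\ref{thm:cadlagmma}) of $\sup_{0\leq t\leq\delta}\|f(A,t-s)-f(A,0-s)\|$, which is exactly a function of the type $f_\delta$. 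The regular variation of $\nu_{int}$ (inherited from $\nu$) together with a Breiman/Markov-type inequality turns the small-$\delta$ control of the $\LL^\alpha$-norm of $f_\delta$ in \eqref{eqn:funcregvarmmabed} into the required bound $nP(a_n^{-1}w(X,[0,\delta)) \geq \varepsilon) \leq \eta$ for $n\geq n_0$.

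The genuinely delicate step is \eqref{eqn:dd:relcomp3}, the bound on the modulus $w''(X,\delta)$, because unlike $w(X,T_0)$ this functional is designed not to penalize a single large jump: for a pure jump at time $t$, either $\|X_t - X_{t_1}\|$ or $\|X_{t_2}-X_t\|$ is small, so the minimum is small. This is precisely why heavy single jumps of $\Lambda^{(2)}$ are allowed while still yielding \cadlag{} limits. I would exploit this by observing that for $X^{(2)}$, which is driven by finitely many jumps of size $\|x\|>1$ on any compact window, the paths are piecewise of the form $\sum_j f(A_j,\,\cdot - s_j)x_j$, and $w''$ cannot be large from an isolated jump; the contribution to $w''$ comes only from the \emph{continuous variation} of each jump's kernel response between $t_1$ and $t_2$, which is again governed by $f_\delta$ and hence by \eqref{eqn:funcregvarmmabed}. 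Concretely, I expect $w''(X^{(2)},\delta) \leq \sup_{t_1\leq t_2,\,t_2-t_1\leq\delta}\|X^{(2)}_{t_2}-X^{(2)}_{t_1}\|$ after excising the at most one jump in each window, reducing the problem to an increment-type bound of the kind already handled for \eqref{eqn:dd:relcomp1}. The small-jump part $X^{(1)}$ is again negligible after scaling.

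The main obstacle, therefore, is making rigorous the excision argument for $w''(X^{(2)},\delta)$: one must argue that on a window of length $\delta$ the probability of two or more large jumps is of smaller order (it is $O(\delta^2)$ against the $O(\delta)$ of a single jump, hence controllable after multiplication by $n$ and the regular-variation scaling), and that after removing the dominant single jump the residual oscillation is exactly of $f_\delta$-type. Once this reduction is in place, conditions \eqref{eqn:xt2ex} and \eqref{eqn:funcregvarmmabed} feed directly into Markov's inequality applied to the integral representation from Proposition~\ref{prop:MMAexfinite}, and the three relative compactness criteria follow. Assembling the finite-dimensional convergence from Theorem~\ref{thm:dd:regvarMMA} with these three estimates, Theorem~\ref{thm:dd:charregvarD} yields $(X_t)_{t\in[0,1]} \in \rvdd$, with the limit measure $\mu$ determined by the $\mu_{t_1,\ldots,t_k}$ as claimed.
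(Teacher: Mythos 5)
Your overall skeleton (Theorem~\ref{thm:dd:charregvarD} plus the L\'evy--It\^o split $\Lambda=\Lambda^{(1)}+\Lambda^{(2)}$) matches the paper, but two steps as you state them would fail. First, your dismissal of $X^{(1)}$ on the grounds that it ``has all moments of order up to $\alpha$ finite'' does not give $nP(a_n^{-1}\sup_t\|X_t^{(1)}\|\geq\varepsilon)\to 0$: with $a_n\sim n^{1/\alpha}\ell(n)$ for slowly varying $\ell$, a finite $\alpha$-moment only yields $nP(\cdot)=o(\ell(n)^{-\alpha})$, which need not vanish. Moreover you need to control the \emph{supremum} over $t\in[0,1]$, not fixed-time marginals. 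The paper's Proposition~\ref{prop:funcregvarmma1} instead exploits that $\nu_1=\nu|_{B_1(0)}$ has bounded support and $f$ is bounded, so the L\'evy measure of the infinitely divisible family $\{X^{(1)}_s\}$ has bounded support; Lemma~2.1 of \cite{braverman:samorodnitsky:1995} then gives finite \emph{exponential} moments of $\sup_{s}\|X_s^{(1)}\|$ (via separability and a countable index set), and only this superpolynomial tail decay kills the factor $n$ (Lemma~1.32 of \cite{lindskog:2004}).

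Second, your reduction of the oscillation of $X^{(2)}$ to ``a function of the type $f_\delta$'' overlooks the indicator $\id_{(t_1,t_2]^c}(s)$ built into \eqref{eqn:fdelta}: the hypothesis \eqref{eqn:funcregvarmmabed} says nothing about jump times $s$ falling \emph{inside} the window $(t_1,t_2]$ (for a causal kernel such as $e^{As}\id_{[0,\infty)}(s)$ that contribution is of order $1$, not $o_\delta(1)$, pointwise). The paper splits $X^{(2)}_{t_1}-X^{(2)}_{t_2}=Z^{(1)}_{t_1,t_2}+Z^{(2)}_{t_1,t_2}$ according to whether $s\in(t_1,t_2]$ or not; only $Z^{(2)}$ is controlled by $f_\delta$ and a Markov-type limit, while $\|Z^{(1)}_{t_1,t_2}\|\leq 2C\,(L^{(2,T)}_{t_2}-L^{(2,T)}_{t_1})$ for the increasing real-valued L\'evy process $L^{(2,T)}$ built from the transformed basis, whose own functional regular variation (Example~\ref{ex:regvarLevyD}) delivers \eqref{eqn:dd:relcomp1}--\eqref{eqn:dd:relcomp3} for that term. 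This same device handles the min-functional in \eqref{eqn:dd:relcomp3} directly --- the min of the $Z^{(1)}$-terms is dominated by the corresponding min for $L^{(2,T)}$, which satisfies \eqref{eqn:dd:relcomp3} by Theorem~\ref{thm:dd:charregvarD} --- so the jump-excision and ``two large jumps in one window is $O(\delta^2)$'' argument you flag as your main obstacle is never needed; as written it remains a genuine gap in your proposal.
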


%Note that the condition $f_d \in \mathbb{L}^2 \cap \mathbb{L}^{\alpha + \xi}$ can be trivially substituted by $f_d \in \mathbb{L}^2$, if $\alpha < 2$. In that case also the condition $\mathbb{L}^2$ follows by $f \in \mathbb{L}^\alpha$ together with the boundedness of $f$. As a direct consequence, some of the conditions can be weakened in case of a driving L\'evy process of finite variation.

%\begin{corollary}\label{cor:funcregvarmmafv}
%Let $\Lambda$ be an $\reald$-valued L\'evy basis on $M_d^- \times \real$ with generating quadruple $(\gamma, 0, \nu, \pi)$ such that $\int_{\|x\| \leq 1} \|x \|\; \nu (dx) < \infty$ and $\nu \in \rvan[\mu_\nu]$. Furthermore, assume that the kernel function $f$ is bounded, that the MMA process $X_t = \int_{M_d^-}\int_{\real} f(A,t-s) \Lambda (dA, ds)$ exists for $t \in [0,1]$ (in the sense of Theorem \ref{thm:dd:MMAex2}) and that the regular variation conditions of Theorem \ref{thm:dd:regvarMMA} hold for $\alpha > 0$ and a positive sequence $(a_n)$. If the function $f_\delta$ given by \eqref{eqn:fdelta}
%satisfies one of the three existence conditions of Theorem \ref{thm:dd:MMAex2} and condition \eqref{eqn:funcregvarmmabed}, then 
%\[(X_t)_{t \in [0,1]} \in \rvdd ,\]
%where $\mu$ is uniquely determined by the measures $\mu_{t_1, \ldots, t_k}$ in Theorem \ref{thm:dd:regvarMMA}.
%\end{corollary}
%
%\begin{proof}
%The proof follows directly from the proof of Theorem \ref{thm:funcregvarmma} (in particular Propositions \ref{prop:funcregvarmma1} and \ref{prop:funcregvarmma4}) together with Corollary \ref{cor:levyitobasesfinite}.
%\end{proof}
 Note that an essential part of the proof of the  theorem is going to be to show that under the above assumption it is essentially only $X^{(2)}$ that matters regarding the extremal behaviour.

The condition $\int_{M_d^-} \int_{\real} f_\delta (A,s)^\alpha\, d s \pi (d A) \to 0$ is closely linked to the behavior of the function $f_\delta$ over small intervals. It restricts the amplitude of jumps and continuous oscillations for arbitrarily small values of $\delta$.

\begin{lemma}\label{lemma:kvgzpunkwint}
Let $\pi$ be a probability measure and $f : M_d^- \times \real \mapsto M_{n,d}$ be a measurable kernel function. Assume that the function $f_{\delta} (A,s)$ given by \eqref{eqn:fdelta} satisfies $f_\delta \in \mathbb{L}^\alpha$ for some $\delta > 0$. Then
\[\lim\limits_{\delta \to 0}\;\int\limits_{M_d^-} \int\limits_{\real} f_\delta (A,s)^\alpha\, d s \pi (d A) = 0\]
if and only if $\;\lim_{\delta \to 0} \; f_\delta (A,s) \to 0$ for $\pi \times \lambda$ almost every $(A,s)$. 
\end{lemma}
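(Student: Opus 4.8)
The plan is to reduce both implications to the standard convergence theorems for integrals, the only structural input being the monotonicity of $f_\delta$ in $\delta$.

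First I would record the key monotonicity observation: for $0 < \delta' \leq \delta$ the index set $\{(t_1,t_2) : t_1 \leq t_2,\ t_2 - t_1 \leq \delta'\}$ over which the supremum in \eqref{eqn:fdelta} is taken is contained in the one for $\delta$, while the function $\|f(A,t_2 - s) - f(A,t_1 - s)\|\,\id_{(t_1,t_2]^c}(s)$ being supremised is unchanged; hence $f_{\delta'}(A,s) \leq f_\delta(A,s)$, i.e.\ $\delta \mapsto f_\delta(A,s)$ is non-increasing. Consequently the pointwise limit $f_0(A,s) := \lim_{\delta \to 0} f_\delta(A,s) = \inf_{\delta > 0} f_\delta(A,s) \geq 0$ exists for every $(A,s)$, is measurable as the decreasing limit of the measurable functions $f_{1/n}$, and agrees with the limit along any sequence $\delta_n \downarrow 0$. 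Fixing $\delta_0 > 0$ with $f_{\delta_0} \in \mathbb{L}^\alpha$, I then have $0 \leq f_\delta^\alpha \leq f_{\delta_0}^\alpha$ for all $\delta \leq \delta_0$, where $f_{\delta_0}^\alpha$ is $(\pi \times \lambda)$-integrable and thus serves as a fixed dominating function.

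For the implication ``$\Leftarrow$'' I assume $f_\delta(A,s) \to 0$ for $\pi \times \lambda$-almost every $(A,s)$; by continuity of $x \mapsto x^\alpha$ this gives $f_\delta^\alpha \to 0$ a.e., and dominated convergence with the dominating function $f_{\delta_0}^\alpha$ yields $\int_{M_d^-}\int_\real f_\delta(A,s)^\alpha\,ds\,\pi(dA) \to 0$. For ``$\Rightarrow$'' the same domination lets me pass to the limit in the decreasing family $f_\delta^\alpha$, so that $\int_{M_d^-}\int_\real f_\delta(A,s)^\alpha\,ds\,\pi(dA) \to \int_{M_d^-}\int_\real f_0(A,s)^\alpha\,ds\,\pi(dA)$; since the left-hand side tends to $0$ by hypothesis, the limiting integral vanishes, forcing $f_0^\alpha = 0$ and hence $f_0 = 0$ $\pi \times \lambda$-almost everywhere, which is precisely $\lim_{\delta \to 0} f_\delta(A,s) = 0$ a.e.

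The computation is essentially routine once monotonicity is established; the one point genuinely requiring it is that the limit as $\delta \to 0$ runs over a continuum of $\delta$'s, and monotonicity is exactly what guarantees that this limit exists and coincides with the sequential limit. Without it, the hypothesis of ``$\Rightarrow$'' (which is $L^\alpha$-convergence to $0$) would only deliver a.e.\ convergence along a subsequence, not the full a.e.\ limit claimed. I would also make sure to note the measurability of $f_\delta$, implicit in the hypothesis $f_\delta \in \mathbb{L}^\alpha$, and of the limit $f_0$, so that the convergence theorems genuinely apply.
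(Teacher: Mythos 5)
Your proof is correct and follows essentially the same route as the paper: dominated convergence with $f_{\delta_0}^\alpha$ as the fixed dominating function for the ``if'' direction, and monotonicity of $f_\delta$ in $\delta$ to identify $\lim_{\delta\to 0}\int f_\delta^\alpha$ with $\int f_0^\alpha$ for the ``only if'' direction, which the paper merely phrases in contrapositive form. One cosmetic slip: since $f_{\delta'}\le f_\delta$ for $\delta'\le\delta$, the map $\delta\mapsto f_\delta(A,s)$ is non-\emph{decreasing} in $\delta$, not non-increasing; the inequality you actually use, and the conclusion that the limit as $\delta\to 0$ is $\inf_{\delta>0}f_\delta$, are correct.
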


\begin{proof}
Let $f_\delta (A,s) \to 0$ for $\pi \times \lambda$ almost every $(A,s)$. Then
\[\int\limits_{M_d^-} \int\limits_{\real} f_\delta (A,s)^\alpha\, d s \pi (d A) \to 0\]
follows by dominated convergence and the assumption $f_\delta \in \mathbb{L}^\alpha$ for some $\delta > 0$. On the other hand, suppose that the set
\[\tilde{B} := \big\{ (A,s) \in \borel_b (M_d^- \times \real) : f_\delta (A,s) \to 0 \mbox{ as } \delta \to 0\big\}\]
satisfies $\pi \times \lambda\; (\tilde{B}^c) = C > 0$. Then the monotonicity of $f_\delta$ in $\delta$ implies $\lim_{\delta \to 0} f_\delta (A,s) > 0$ for every $(A,s) \in \tilde{B}^c$ and thus
\[\lim\limits_{\delta \to 0}\;\int\limits_{M_d^-} \int\limits_{\real} f_\delta (A,s)^\alpha\, d s \pi (d A) > 0.\]
\end{proof}

\begin{remark}
From the definition of $f_\delta$ we see that the condition $\;\lim_{\delta \to 0} \; f_\delta (A,s) \to 0$ for $\pi \times \lambda$ almost every $(A,s)$ is equivalent to the kernel function $f(A,s)$ being continuous in $s$ for all $s \in \real \wo \{0\}$. Now we also see the importance of the restriction $\id_{(t_1,t_2]^c}\, (s)$ in the definition of $f_\delta$ because it allows for $f(A,s)$ being discontinuous at $s = 0$. Without such a restriction, condition \eqref{eqn:funcregvarmmabed} would be violated by many examples of the class of causal MMA processes which have a kernel function of the type $f(A,s) \id_{[0, \infty)} (s)$. Causal MMA processes with $f(A, 0) \not= 0$ include CARMA and supOU processes as well as many other well-known examples of MMA processes.% For the same reason, the function $f_d (A,s) = \sup_{t \in [0,1]}\; \| \frac{d}{d t} f(A, t - s) \|\; \id_{[0,1]^c} (s)$ is restricted to $\id_{[0,1]^c} (s)$. Without such a restriction, it is easy to see that $ \sup_{t \in [0,1]}\; \| \frac{d}{d t} f(A, t - s) \| = \infty$ for all $s \in [0,1]$, if $f(A,s)$ is discontinuous at $s = 0$. Consequently, the conditions of Theorem \ref{thm:funcregvarmma} would not make sense.
\end{remark}

We can also give sufficient conditions for a general function $f: M_d^- \times \real \to M_{n,d}$ to satisfy condition \eqref{eqn:xt2ex} which is going to ensure that $X_t^{(2)}$ is well-defined as an $\omega$-wise Lebesgue integral.

\begin{lemma}\label{lemma:charxt2ex}
Let $f: M_d^- \times \real \to M_{n,d}$ be a measurable function. Then condition \eqref{eqn:xt2ex} holds if one of the following two conditions are satisfied:
\begin{enumerate}
	\item[(i)] $f \in \mathbb{L}^1$ and $\alpha > 1$.
	\item[(ii)] $f \in \mathbb{L}^{\alpha - \varepsilon}$ for one $\varepsilon \in (0,\alpha)$ and $\alpha \leq 1$.  
\end{enumerate}
\end{lemma}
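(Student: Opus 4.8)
The plan is to estimate the integrand of \eqref{eqn:xt2ex} pointwise so that the $x$-integration decouples from the $(A,s)$-integration. Two elementary tools suffice: submultiplicativity of the operator norm, $\|f(A,s)x\| \le \|f(A,s)\|\,\|x\|$, and the inequality $1 \wedge u \le u^{\beta}$, valid for every $u \ge 0$ and every $\beta \in (0,1]$ (check $u\le 1$ and $u\ge 1$ separately). Combining them gives, for any such $\beta$,
\[
1 \wedge \|f(A,s)x\| \;\le\; \|f(A,s)x\|^{\beta} \;\le\; \|f(A,s)\|^{\beta}\,\|x\|^{\beta}.
\]
Since the integrand is nonnegative, Tonelli's theorem then factors the triple integral in \eqref{eqn:xt2ex} as
\[
\int_{M_d^-}\int_\real \|f(A,s)\|^{\beta}\,ds\,\pi(dA)\;\cdot\;\int_{\|x\|>1}\|x\|^{\beta}\,\nu(dx),
\]
so the statement reduces to choosing $\beta \in (0,1]$ making both factors finite.

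Finiteness of the second factor is where regular variation enters. First I would recall that $\nu \in \rvan[\mu_\nu]$ with index $\alpha$ forces the tail $\overline{\nu}(r) := \nu(\{x : \|x\|>r\})$ to be regularly varying of index $-\alpha$. Writing the truncated moment via Tonelli as
\[
\int_{\|x\|>1}\|x\|^{\beta}\,\nu(dx) = \overline{\nu}(1) + \beta\int_1^\infty r^{\beta-1}\,\overline{\nu}(r)\,dr,
\]
the integrand $r^{\beta-1}\overline{\nu}(r)$ is regularly varying of index $\beta-1-\alpha$, hence integrable at infinity exactly when $\beta<\alpha$. This \emph{Karamata-type} bound is the only quantitative input needed: $\int_{\|x\|>1}\|x\|^{\beta}\nu(dx)<\infty$ for every $\beta<\alpha$.

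It then remains to match $\beta$ to the hypothesis in each case. Under (i) I take $\beta=1$; here $1 \wedge \|f(A,s)x\| \le \|f(A,s)\|\,\|x\|$ directly, the moment $\int_{\|x\|>1}\|x\|\,\nu(dx)$ is finite because $1<\alpha$, and the first factor is $\|f\|_{\mathbb{L}^1}<\infty$ by assumption. Under (ii) I take $\beta=\alpha-\varepsilon$; since $\alpha\le 1$ this lies in $(0,1)$, so the elementary inequality applies, the moment is finite because $\alpha-\varepsilon<\alpha$, and the first factor equals $\int_{M_d^-}\int_\real\|f(A,s)\|^{\alpha-\varepsilon}\,ds\,\pi(dA)<\infty$ by the assumption $f\in\mathbb{L}^{\alpha-\varepsilon}$. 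In both cases the product is finite, which is precisely \eqref{eqn:xt2ex}.

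I do not expect a serious obstacle: the argument is essentially a single Karamata moment estimate. The only points demanding care are verifying that the chosen exponent satisfies $\beta\le 1$ — automatic in case (ii) precisely because we are in the regime $\alpha\le 1$, which is what makes the inequality $1\wedge u\le u^{\beta}$ available — and invoking Tonelli to separate the integrals, which is legitimate since the integrand is nonnegative.
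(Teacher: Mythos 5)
Your proof is correct and follows essentially the same route as the paper: bound $1\wedge\|f(A,s)x\|$ by $\|f(A,s)\|^{\beta}\|x\|^{\beta}$ with $\beta=1$ in case (i) and $\beta=\alpha-\varepsilon$ in case (ii), then factor the integral by Tonelli. The only cosmetic difference is that you justify $\int_{\|x\|>1}\|x\|^{\beta}\,\nu(dx)<\infty$ for $\beta<\alpha$ by a self-contained Karamata tail-integration argument, whereas the paper simply invokes Corollary 25.8 of Sato together with the regular variation of $\nu$; the quantitative content is identical.
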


\begin{proof}
For (i) we calculate
\begin{align*}
&\int \limits_{M_d^-} \int \limits_{\real} \int \limits_{\|x\| > 1} (1 \wedge \| f(A, s) x\|) \,\nu (dx)\,ds\,\pi (dA) \leq\\
&\achtquad \leq \int \limits_{M_d^-} \int \limits_{\real} \int \limits_{\|x\| > 1}  \| f(A, s)\| \|x\| \,\nu (dx)\,ds\,\pi (dA)\\
&\achtquad = \int \limits_{M_d^-} \int \limits_{\real}  \| f(A, s)\| \,ds\,\pi (dA)\; \int \limits_{\|x\| > 1}  \| x\| \,\nu (dx)\\
&\achtquad < \infty
\end{align*}
by \cite{sato:2002}, Corollary 25.8, and similarly for (ii) we obtain
\begin{align*}
&\int \limits_{M_d^-} \int \limits_{\real} \int \limits_{\|x\| > 1} (1 \wedge \| f(A, s) x\|) \,\nu (dx)\,ds\,\pi (dA) \leq\\
&\achtquad \leq \int \limits_{M_d^-} \int \limits_{\real} \int \limits_{\|x\| > 1} (1 \wedge \| f(A, s) x\|^{\alpha - \varepsilon}) \,\nu (dx)\,ds\,\pi (dA)\\
&\achtquad \leq \int \limits_{M_d^-} \int \limits_{\real} \int \limits_{\|x\| > 1}  \| f(A, s)\|^{\alpha - \varepsilon} \|x\|^{\alpha - \varepsilon} \,\nu (dx)\,ds\,\pi (dA)\\
&\achtquad = \int \limits_{M_d^-} \int \limits_{\real}  \| f(A, s)\|^{\alpha - \varepsilon} \,ds\,\pi (dA)\; \int \limits_{\|x\| > 1}  \| x\|^{\alpha - \varepsilon} \,\nu (dx)\\
&\achtquad < \infty.
\end{align*}
\end{proof}

\begin{remark}
The conditions of Lemma \ref{lemma:charxt2ex} are only sufficient, not necessary, similar to the ones of Theorem \ref{thm:dd:MMAex2}. Thus in general we will only demand the weaker condition \eqref{eqn:xt2ex} which is also one of the existence conditions for MMA processes with driving L\'evy process of finite variation in Proposition \ref{prop:MMAexfinite}. Furthermore, we see from Lemma \ref{lemma:charxt2ex}(i) that condition \eqref{eqn:xt2ex} in Proposition \ref{prop:MMAexfinite} can be dropped if $\alpha > 1$.
\end{remark}\vspace*{0.2cm}

\subsection{Proof of Theorem \ref{thm:funcregvarmma}}~\\

In this subsection we now gradually prove Theorem \ref{thm:funcregvarmma}.

Let $(X_t)$ be an MMA process as given in Theorem \ref{thm:funcregvarmma}, i.e.\ $(X_t)$ exists for $t \in [0,1]$ (in the sense of Theorem \ref{thm:dd:MMAex}), the kernel function $f$ is bounded by $C \in \real^+$ and the regular variation conditions of Theorem \ref{thm:dd:regvarMMA} hold. Then there exists a positive sequence $a_n \nearrow \infty$ and a collection $\{ \mu_{t_1, \ldots, t_k} : t_i \in T, k \in \nat \}$ of Radon measures on $\borel (\bar{\real}^{dk} \wo \{0\})$ with $\mu_{t_1, \ldots, t_k} (\bar{\real}^{dk} \wo \real^{dk}) = 0$ and $\mu_t$ is nonzero for some $t\in T$ such that
\begin{equation*}
n P (a_n^{-1} ( X_{t_1}, \ldots, X_{t_k} ) \in \cdot) \tov \mu_{t_1, \ldots, t_k} (\cdot)  \quad \mbox{on } \borel (\bar{\real}^{dk} \wo \{0\}).
%\label{eq:}
\end{equation*}
Applying Theorem \ref{thm:dd:charregvarD}, it is left to show that the conditions \eqref{eqn:dd:relcomp1}, \eqref{eqn:dd:relcomp2} and \eqref{eqn:dd:relcomp3} hold.\par
Using the L\'evy-It\^o decomposition we have two independent L\'evy bases $\Lambda^{(1)}$ and $\Lambda^{(2)}$ such that $\Lambda^{(1)}$ has generating quadruple $(\gamma,\Sigma, \nu_1, \pi)$ and $\Lambda^{(2)}$ has generating quadruple $(0, 0, \nu_2 , \pi)$, where $\nu_1 = \nu |_{B_1(0)}$ and $\nu_2 = \nu |_{B_1(0)^c}$. This yields
\begin{equation}
X_{t} = X_{t}^{(1)} + X_{t}^{(2)},
\end{equation}\label{eqn:levyitomma}
where
\begin{equation}\label{eqn:levyitommabound}
X_{t}^{(1)} = \int\limits_{M_d^-}\int\limits_{\real} f (A,t-s) \Lambda^{(1)}(dA, ds)
\end{equation}
and
\begin{equation}\label{eqn:levyitommafinite}
X_{t}^{(2)} = \int\limits_{M_d^-}\int\limits_{\real} f (A,t-s) \Lambda^{(2)} (dA, ds).
\end{equation}
Note that the term $X_t^{(2)}$ can be written in the form
\[X_t^{(2)} = \int\limits_{\|x\| \geq 1}\int\limits_{M_d^-}\int\limits_{\real} f (A,t - s) \,x\, N(dx, dA, ds),\]
where $N$ is a Poisson random measure with mean measure $\nu \times \pi \times \lambda$. Before we proceed, we need to ensure the existence of $X_t^{(1)}$ and $X_t^{(2)}$. Therefore, we give conditions for $\omega$-wise existence of $X_t^{(2)}$ as a Lebesgue integral. Then the existence of $X_t^{(1)} = X_t - X_t^{(2)}$ follows from the existence of $X_t$ and $X_t^{(2)}$.

\begin{proposition}\label{prop:xt2exist}
Let $X_t^{(2)}$ be the process given by \eqref{eqn:levyitommafinite}, where $\Lambda^{(2)}$ is a L\'evy basis with generating quadruple $(0, 0, \nu_2 , \pi)$. If
\[\int \limits_{M_d^-} \int \limits_{\real} \int \limits_{\reald} (1 \wedge \| f(A, s) x\|) \,\nu_2 (dx)\,ds\,\pi (dA) < \infty,\]
then $X_t^{(2)}$ exists as a Lebesgue integral for all $\omega \in \Omega$.
\end{proposition}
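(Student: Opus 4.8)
The plan is to represent $X_t^{(2)}$ explicitly as an $\omega$-wise integral against the Poisson random measure and to reduce its existence to the absolute-convergence criterion for Poisson integrals. Recall from the L\'evy--It\^o decomposition (Theorem \ref{thm:levyitobases}) and \eqref{eq:lam2} that
\[X_t^{(2)} = \int\limits_{\|x\| > 1}\int\limits_{M_d^-}\int\limits_\real f(A, t-s)\, x\, N(dx, dA, ds),\]
where $N$ is a Poisson random measure on $\reald \times M_d^- \times \real$ with intensity $\rho := \nu_2 \times \pi \times \lambda$ and $\nu_2 = \nu|_{B_1(0)^c}$. For fixed $\omega$ the measure $N(\cdot,\omega)$ is a counting measure supported on a countable point set, so the integral is a sum, and it exists as a finite Lebesgue integral precisely when it converges absolutely. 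Writing $g(x,A,s) := \|f(A, t-s)\,x\|$, the goal is therefore to show
\[\int\limits_{\reald}\int\limits_{M_d^-}\int\limits_\real g(x,A,s)\, N(dx, dA, ds) < \infty \quad \text{almost surely.}\]

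For this I would first identify the hypothesis with the quantity $\int (1\wedge g)\,d\rho$: by Tonelli (the integrand is nonnegative) and translation invariance of Lebesgue measure (substituting $u = t-s$), one has $\int (1\wedge g)\,d\rho = \int_{M_d^-}\int_\real\int_{\reald}(1 \wedge \|f(A,u)x\|)\,\nu_2(dx)\,du\,\pi(dA)$, which is exactly the finite quantity assumed in the statement. I then split $g = g\,\id_{\{g \le 1\}} + g\,\id_{\{g > 1\}}$. On $\{g>1\}$ the intensity is finite, since $\rho(\{g>1\}) = \int_{\{g>1\}}(1\wedge g)\,d\rho \le \int(1\wedge g)\,d\rho < \infty$; hence $N$ places only finitely many points there almost surely and $\int g\,\id_{\{g>1\}}\,dN$ is a finite sum. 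For the small part, Campbell's formula gives $\ev\big[\int g\,\id_{\{g\le1\}}\,dN\big] = \int_{\{g\le1\}}(1\wedge g)\,d\rho \le \int(1\wedge g)\,d\rho < \infty$, so $\int g\,\id_{\{g\le1\}}\,dN$ is finite almost surely. Adding the two parts yields $\int g\,dN < \infty$ a.s., which establishes absolute convergence. (One also records here that $\nu_2$ is itself finite, since $1\wedge\|x\|^2 = 1$ on $B_1(0)^c$ and $\int(1\wedge\|x\|^2)\,\nu(dx) < \infty$, which is what makes the large-jump intensity finite once $f$ is inserted.)

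The main obstacle is precisely the control of the infinite sum: because $\lambda(\real) = \infty$, the intensity $\rho$ is infinite and $N$ carries infinitely many points, so absolute convergence is a genuine condition rather than a formality. The two-part splitting into a finite-intensity large part and a finite-mean small part is what resolves this; the remaining ingredients (Tonelli, the change of variables $u = t-s$, finiteness of $\nu_2$) are routine. Finally, the criterion yields finiteness for $P$-almost every $\omega$, and hence existence of the Lebesgue integral $X_t^{(2)}$ for all $\omega \in \Omega$ after passing to the modification fixed in Theorem \ref{thm:levyitobases}, consistently with the convention used in Proposition \ref{prop:MMAexfinite}.
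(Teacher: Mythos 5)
Your argument is correct, but it takes a genuinely different route from the paper. The paper's proof is a two-line reduction: since $\Lambda^{(2)}$ has generating quadruple $(0,0,\nu_2,\pi)$ with $\int_{\|x\|\le 1}\|x\|\,\nu_2(dx)=0$, the underlying L\'evy process is of finite variation, so Proposition \ref{prop:MMAexfinite} applies directly, and the $f\in\mathbb{L}^1$ hypothesis there is dropped because the drift $\gamma_0$ vanishes. You instead re-derive the relevant special case of that proposition from first principles: you write $X_t^{(2)}$ as an $\omega$-wise integral against the Poisson random measure $N$ with intensity $\nu_2\times\pi\times\lambda$, and establish absolute convergence by splitting the integrand $g=\|f(A,t-s)x\|$ at level $1$ --- finitely many atoms of $N$ on $\{g>1\}$ because that set has finite intensity, and a finite first moment on $\{g\le 1\}$ by Campbell's formula --- with the translation $u=t-s$ identifying the hypothesis with $\int(1\wedge g)\,d\rho<\infty$. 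This is sound and has the merit of making transparent \emph{why} condition \eqref{eqn:xt2ex} is exactly the absolute-convergence criterion for the large-jump Poisson integral, at the cost of redoing work the paper outsources to Proposition \ref{prop:MMAexfinite}. One small point to keep in mind: your splitting argument delivers finiteness only for $P$-almost every $\omega$, whereas the statement asserts existence for all $\omega$; you correctly flag that this is absorbed by the modification fixed in Theorem \ref{thm:levyitobases}, which is also how the cited Proposition \ref{prop:MMAexfinite} achieves the everywhere statement, so this is a presentational caveat rather than a gap.
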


\begin{proof}
By definition, $X_t^{(2)}$ has no Gaussian component and $\int_{\|x\|\leq 1} \|x\| \nu_2 (dx) = 0$ and thus we have an underlying L\'evy process of finite variation. Now the result follows as a special case of Proposition \ref{prop:MMAexfinite}, where the condition $f \in \mathbb{L}^1$ is obsolete due to the absence of a drift.
\end{proof}

Like for $X_t$, we also assumed that $X_t^{(2)}$ has c\`adl\`ag sample paths. Then also $X_t^{(1)} = X_t - X_t^{(2)}$ has c\`adl\`ag sample paths. Appropriate conditions for MMA processes to have c\`adl\`ag sample paths have been given in Section \ref{sec:dd:sample}.\par
Now we continue the proof of Theorem \ref{thm:funcregvarmma} by verifying the relative compactness conditions \eqref{eqn:dd:relcomp1}, \eqref{eqn:dd:relcomp2} and \eqref{eqn:dd:relcomp3}. The essential point is to relate the conditions back to the analogous conditions on the underlying L\'evy process.

For the first condition \eqref{eqn:dd:relcomp1} we obtain
\begin{align*}
\sup\limits_{t_1, t_2 \in [0,\delta)} \; \|X_{t_1} - X_{t_2}\|  &\leq \sup\limits_{t_1, t_2 \in [0,\delta)} \; \|X_{t_1}^{(1)} - X_{t_2}^{(1)}\| + \sup\limits_{t_1, t_2 \in [0,\delta)} \; \|X_{t_1}^{(2)} - X_{t_2}^{(2)}\|
\end{align*}
and hence
\begin{align*}
&n P \Big( a_n^{-1} \sup\limits_{t_1, t_2 \in [0,\delta)} \; \|X_{t_1} - X_{t_2}\| \geq \varepsilon \Big) \leq\notag\\
&\quad\leq n P \Big( a_n^{-1} \sup\limits_{t_1, t_2 \in [0,\delta)}  \|X_{t_1}^{(1)} - X_{t_2}^{(1)}\| \geq \varepsilon / 2 \Big) + n P \Big( a_n^{-1} \sup\limits_{t_1, t_2 \in [0,\delta)}  \|X_{t_1}^{(2)} - X_{t_2}^{(2)}\| \geq \varepsilon / 2 \Big).
\end{align*}
The analogue result for the second condition \eqref{eqn:dd:relcomp2} can be obtained likewise. For the third condition \eqref{eqn:dd:relcomp3} we estimate
\begin{align*}
&\sup\limits_{t_1\leq t \leq t_2;\; t_2 - t_1 \leq \delta} \min \Big\{ \|X_{t_2} - X_{t}\|, \|X_{t} - X_{t_1}\|\Big\} \leq\\
&\zweiquad \leq \sup\limits_{t_1 \leq t_2;\; t_2 - t_1 \leq \delta}\; \|X_{t_1}^{(1)} - X_{t_2}^{(1)}\| + \sup\limits_{t_1\leq t \leq t_2;\; t_2 - t_1 \leq \delta} \min \Big\{ \|X_{t_2}^{(2)} - X_{t}^{(2)}\|, \|X_{t}^{(2)} - X_{t_1}^{(2)}\|\Big\},
\end{align*}
and
\begin{align*}
&n P \Big( a_n^{-1} \sup\limits_{t_1\leq t \leq t_2;\; t_2 - t_1 \leq \delta} \min \Big\{ \|X_{t_2} - X_{t}\|, \|X_{t} - X_{t_1}\|\Big\} \geq \varepsilon \Big) \leq\notag\\
&\vierquad\zweiquad\quad \leq n P \Big( a_n^{-1} \sup\limits_{t_1 \leq t_2;\; t_2 - t_1 \leq \delta}\; \|X_{t_1}^{(1)} - X_{t_2}^{(1)}\| \geq \varepsilon / 2 \Big)\\
&\achtquad + n P \Big( a_n^{-1} \sup\limits_{t_1\leq t \leq t_2;\; t_2 - t_1 \leq \delta} \min \Big\{ \|X_{t_2}^{(2)} - X_{t}^{(2)}\|, \|X_{t}^{(2)} - X_{t_1}^{(2)}\|\Big\} \geq \varepsilon / 2 \Big).
\end{align*}
For every $\varepsilon, \eta > 0$ we have to show that there exists $n_0 \in \nat$ and $\delta > 0$ such that for $n \geq n_0$ these quantities can be bounded by $\eta$. Regarding the quantities based on $X_t^{(1)}$ we observe
\[n P \Big( a_n^{-1} \sup\limits_{t_1, t_2 \in [0,\delta)}  \|X_{t_1}^{(1)} - X_{t_2}^{(1)}\| \geq \varepsilon / 2 \Big) \leq n P \Big( a_n^{-1} \sup\limits_{t_1 \leq t_2;\; t_2 - t_1 \leq \delta}\; \|X_{t_1}^{(1)} - X_{t_2}^{(1)}\| \geq \varepsilon / 2 \Big)\]
and for \eqref{eqn:dd:relcomp2}
\[n P \Big( a_n^{-1} \sup\limits_{t_1, t_2 \in [1 - \delta,1)}  \|X_{t_1}^{(1)} - X_{t_2}^{(1)}\| \geq \varepsilon / 2 \Big) \leq n P \Big( a_n^{-1} \sup\limits_{t_1 \leq t_2;\; t_2 - t_1 \leq \delta}\; \|X_{t_1}^{(1)} - X_{t_2}^{(1)}\| \geq \varepsilon / 2 \Big)\]
and thus it is sufficient to prove the bound only for the right hand side of the inequality.

\begin{proposition}\label{prop:funcregvarmma1}
Let $\Lambda^{(1)}$ be the $\reald$-valued L\'evy basis on $M_d^- \times \real$ determined by the generating quadruple $(\gamma, \Sigma, \nu_1, \pi)$, where $\nu_1 = \nu |_{B_1(0)}$. Assume that the kernel function $f$ is bounded, that the MMA process $X_t^{(1)}$ given by \eqref{eqn:levyitommabound} exists for $t \in [0,1]$ and that $X_t^{(1)}$ has c\`adl\`ag sample paths. Moreover, suppose that $\nu \in \rvan[\mu_\nu]$. Then $X_t^{(1)}$ satisfies 
\[\lim\limits_{n \to \infty} n P ( a_n^{-1} \sup\limits_{t_1 \leq t_2;\; t_2 - t_1 \leq \delta} \; \|X_{t_1}^{(1)} - X_{t_2}^{(1)}\| \geq \varepsilon) = 0\]
for all $\delta \in (0,1)$ and $\varepsilon > 0$.
\end{proposition}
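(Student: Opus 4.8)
The plan is to exploit that $\Lambda^{(1)}$ has bounded jumps, since its L\'evy measure $\nu_1 = \nu|_{B_1(0)}$ is supported on $\{\|x\| \le 1\}$; consequently $X^{(1)}$ has far lighter tails than the scaling $a_n$ can detect, which is the precise sense in which the truncated part is irrelevant for the extremal oscillation. Write $M_\delta := \sup_{t_1 \le t_2;\, t_2 - t_1 \le \delta} \|X_{t_1}^{(1)} - X_{t_2}^{(1)}\|$ and use the crude bound $M_\delta \le 2\sup_{t \in [0,1]} \|X_t^{(1)}\| =: 2S$; since we only need the limit for each fixed $\delta$ and $\varepsilon$, this estimate loses nothing essential. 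By Markov's inequality with an exponent $p > \alpha$,
\[ n P\big(a_n^{-1} M_\delta \ge \varepsilon\big) \le n P\big(S \ge \tfrac{\varepsilon}{2} a_n\big) \le \frac{2^p}{\varepsilon^p}\,\frac{n}{a_n^{p}}\,\ee[S^p]. \]
It therefore suffices to prove (i) $n/a_n^{p} \to 0$ for some $p > \alpha$ and (ii) $\ee[S^p] < \infty$.

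Part (i) is immediate from the regular variation of the norming sequence: since $X \in \rvan$ with index $\alpha$, the sequence $a_n$ is regularly varying of index $1/\alpha$, so that $a_n^{\alpha - p}$ decays faster than any slowly varying correction and $n/a_n^{p} \to 0$ for every $p > \alpha$. Thus the proposition reduces entirely to the moment estimate (ii). The first step toward (ii) is that each marginal $X_t^{(1)}$ has all polynomial moments: it is infinitely divisible with L\'evy measure $\nu^{(1)}_{int}(\cdot) = \int\int\int \id_{\cdot}(f(A,s)x)\,\nu_1(dx)\,ds\,\pi(dA)$, and because $f$ is bounded by $C$ and $\nu_1$ lives on $\{\|x\| \le 1\}$ we have $\|f(A,s)x\| \le C$, so $\nu^{(1)}_{int}$ is supported in the ball of radius $C$. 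Hence the marginal has a finite moment generating function everywhere, and in particular every $\ee[\|X_t^{(1)}\|^p]$ is finite.

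To pass from the marginals to the running supremum $S$ I would split $X^{(1)} = X^{(1,G)} + X^{(1,J)}$ into its Gaussian MMA part (driven by $\schl{\Lambda}^G$, quadruple $(0,\Sigma,0,\pi)$) and its compensated bounded-jump part. For $X^{(1,G)}$ the finiteness of $\ee[(\sup_t \|X_t^{(1,G)}\|)^p]$ for every $p$ follows from standard Gaussian theory (Fernique/Borell), once a.s.\ boundedness of the Gaussian supremum on $[0,1]$ is noted, which holds because $X^{(1)}$ is assumed c\`adl\`ag. For the bounded-jump part the compact support of the path-space L\'evy measure forces exponential tails for the supremum, giving $\ee[(\sup_t \|X_t^{(1,J)}\|)^p] < \infty$. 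Combining the two parts yields $\ee[S^p] < \infty$ for all $p > 0$, in particular for the chosen $p > \alpha$, and the displayed chain of inequalities then gives $n P(a_n^{-1} M_\delta \ge \varepsilon) \to 0$.

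I expect the transfer from finite marginal moments to finiteness of $\ee[S^p]$ for the running supremum to be the main obstacle, since $t \mapsto X_t^{(1)}$ is a moving average and \emph{not} a process with independent increments, so no elementary L\'evy or Ottaviani maximal inequality applies directly. My fallback is to reduce $S$ to a supremum over $[0,1]\cap\rational$ using right-continuity and then control it through the theory of suprema of infinitely divisible processes with bounded jumps, where the compact support of the path-space L\'evy measure guarantees that the mass beyond any level decays faster than any power of that level, hence all moments of $S$ are finite.
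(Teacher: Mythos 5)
Your proposal is correct and follows essentially the same route as the paper: bound the oscillation by $2\sup_{t\in[0,1]}\|X_t^{(1)}\|$, reduce to a countable supremum via the c\`adl\`ag property, use the boundedness of $f$ together with the compact support of $\nu_1$ to conclude that the path-space L\'evy measure has bounded support and hence the supremum has all exponential (in particular all polynomial) moments, and then kill the factor $n$ against $a_n^{-p}$ with $p>\alpha$. The ``theory of suprema of infinitely divisible processes with bounded jumps'' you appeal to for the key moment bound is precisely Lemma 2.1 of \cite{braverman:samorodnitsky:1995}, which the paper invokes directly for the full process $X^{(1)}$ (Gaussian component included), so your separate Fernique/Borell treatment of the Gaussian part is not needed but does no harm.
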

\begin{proof}
We start by observing that $X_{t}^{(1)}$ is c\`adl\`ag and thus also separable and hence we can estimate
\[\sup\limits_{t_1 \leq t_2;\; t_2 - t_1 \leq \delta} \; \|X_{t_1}^{(1)} - X_{t_2}^{(1)}\| \leq 2 \sup\limits_{t \in [0,1]} \; \|X_{t}^{(1)}\| = 2 \sup\limits_{t \in [0,1] \cap \rational} \; \|X_{t}^{(1)}\|.\]
Due to the equivalence of norms, we can now choose the matrix norm
\[\|A \| := \max \{|a_{ij}|: 1 \leq i \leq n \mbox{ and } 1 \leq j \leq d\}\]
for $A \in M_{n,d}$ and denote by $X_{t, i}^{(1)} \in \real$, $1 \leq i \leq n$, the $i$-th component of $X_{t}^{(1)}$, i.e.
\[X_{t}^{(1)} = \left( X_{t,1}^{(1)}, X_{t,2}^{(1)}, \ldots , X_{t,n}^{(1)} \right)^T.\]
Furthermore, define the (countable) set
\[\schl{T} := \left\{ (t, i): t \in [0,1] \cap \rational \mbox{ and } i \in \{1, \ldots, n\} \right\}.\]
Then we obtain
\[ \sup\limits_{t \in [0,1] \cap \rational} \; \|X_{t}^{(1)}\| = \sup\limits_{t \in [0,1] \cap \rational} \;\max\limits_{1 \leq i \leq n} \|X_{t,i}^{(1)}\| = \sup\limits_{s \in \schl{T}} \; \|X_{s}^{(1)}\|,\]
where $\sup_{s \in \schl{T}}$ is a subadditive functional on $\real^{\schl{T}}$. Furthermore, by Theorem \ref{thm:dd:MMAex} the processes $X_{t,i}^{(1)}$ are infinitely divisible with specified characteristic triplet $(\gamma_{t,i}, \Sigma_{t,i}, \nu_{t,i})$ and L\'evy measure
\[\nu_{t,i} (B)= \int\limits_{M_d^-} \int\limits_{\real} \int\limits_{\reald} \id_B(f_i (A,t - s)x) \nu_1(d x) d s \pi (d A)\]
for all $B \in \borel (\real)$, where $f_i$ denotes the $i$-th row of $f$, i.e.
\[
 f(A,t-s)=\begin{pmatrix} 
           f_1(A,t-s)\\
\vdots\\
f_d(A,t-s)
          \end{pmatrix}.
\]
It follows that $\mathbf{X^{(1)}} = \{X_s^{(1)}: x \in \schl{T} \}$ is infinitely divisible with characteristic triplet $(\schl{\gamma}, \schl{\Sigma}, \schl{\nu})$, where $\schl{\gamma}$, $\schl{\Sigma}$ and $\schl{\nu}$ are given as projective limits of the corresponding finite dimensional characteristics described by $(\gamma_{t,i}, \Sigma_{t,i}, \nu_{t,i})$ (cf.\ \cite{maruyama:1970}). Moreover, the boundedness $\|f\| \leq C$ implies $\|f_i\| \leq C$ and this, together with the definition of $\nu_1 = \nu |_{B_1(0)}$, yields that the support of the L\'evy measures $\nu_{t,i}$ and $\schl{\nu}$ can be bounded by $C$. Now we are able to apply Lemma 2.1 of \cite{braverman:samorodnitsky:1995} to obtain
\[\ev \left( \exp \left( \varepsilon\; \sup\limits_{s \in \schl{T}} \; \|X_{s}^{(1)}\|\right) \right) < \infty\]
for all $\varepsilon > 0$. Finally, the finite exponential moments in combination with Lemma 1.32 of \cite{lindskog:2004} yield
\[\lim\limits_{n \to \infty} n P \Big( a_n^{-1} \sup\limits_{t_1 \leq t_2;\; t_2 - t_1 \leq \delta} \; \|X_{t_1}^{(1)} - X_{t_2}^{(1)}\| \geq \varepsilon \Big) \leq \lim\limits_{n \to \infty} n P \Big( a_n^{-1} \sup\limits_{s \in \schl{T}} \; \|X_{s}^{(1)}\| \geq \varepsilon / 2\Big) = 0\]
for all $\varepsilon > 0$.
\end{proof}

Next we check the process $X_t^{(2)}$ with respect to the relative compactness conditions \eqref{eqn:dd:relcomp1}, \eqref{eqn:dd:relcomp2} and \eqref{eqn:dd:relcomp3}.

\begin{proposition}\label{prop:funcregvarmma2}
Let $\Lambda$ be an $\reald$-valued L\'evy basis on $M_d^- \times \real$ with generating quadruple $(\gamma, \Sigma, \nu, \pi)$ and let $\nu \in \rvan[\mu_\nu]$. Assume that the kernel function $f$ is bounded, the MMA process $X_t^{(2)} = \int_{M_d^-}\int_{\real} f(A,t-s) \Lambda^{(2)} (dA, ds)$ satisfies the existence conditions of Proposition \ref{prop:xt2exist} and that the regular variation conditions of Theorem \ref{thm:dd:regvarMMA} hold. If the function $f_\delta$ given by \eqref{eqn:fdelta}
%\begin{equation}\label{eqn:fdelta}
%f_{\delta} (A,s) := \sup\limits_{t_1 \leq t_2;\; t_2 - t_1 \leq \delta} \| f(A, t_2 - s) - f(A, t_1 - s) \|\; \id_{(t_1,t_2]^c}\, (s)
%\end{equation}
%\begin{equation}\label{eqn:fdelta}
%f_{\delta} (A,s) := \sup\limits_{t_1 \leq t_2;\; t_2 - t_1 \leq \delta} \| f(A, t_2 - s) - f(A, t_1 - s) \|
%\end{equation}
satisfies the existence condition of Proposition \ref{prop:xt2exist} and, as $\delta \to 0$, 
\[\int\limits_{M_d^-} \int\limits_{\real} f_{\delta} (A,s)^\alpha\, d s \pi (d A) \to 0,\]
%then 
%\[(X_t)_{t \in [0,1]} \in \rvdd ,\]
%where $\mu$ is uniquely determined by the measures $\mu_{t_1, \ldots, t_k}$ in Theorem \ref{thm:dd:regvarMMA}.
then $X_t^{(2)}$ given by \eqref{eqn:levyitommafinite} satisfies the relative compactness conditions \eqref{eqn:dd:relcomp1}, \eqref{eqn:dd:relcomp2} and \eqref{eqn:dd:relcomp3}.
\end{proposition}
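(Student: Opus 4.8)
The plan is to verify the three relative compactness conditions \eqref{eqn:dd:relcomp1}, \eqref{eqn:dd:relcomp2} and \eqref{eqn:dd:relcomp3} for $X^{(2)}$ by isolating, in each oscillation functional, a \emph{continuous} part whose extremes are governed by the kernel $f_\delta$ of \eqref{eqn:fdelta}, and a \emph{jump} part governed by the large atoms of the Poisson random measure $N$. The starting observation is that since $\Lambda^{(2)}$ carries only jumps with $\|x\|>1$ and $\nu(\{\|x\|>1\})<\infty$, the underlying Lévy process $L^{(2)}$ is compound Poisson; hence in any window $[t_1,t_2]$ only finitely many atoms $(x_j,A_j,s_j)$ with $s_j\in[t_1,t_2]$ occur, and these are the only source of genuine jumps of $t\mapsto X_t^{(2)}$ in that window. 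This is exactly how the conditions get related back to the underlying Lévy process.

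First I would set up the continuous bound. Writing
\[X_{t_2}^{(2)}-X_{t_1}^{(2)}=\int_{\|x\|>1}\int_{M_d^-}\int_\real \bigl(f(A,t_2-s)-f(A,t_1-s)\bigr)\,x\,N(dx,dA,ds),\]
for any window with $t_2-t_1\le\delta$ the contribution of every atom with $s_j\notin(t_1,t_2]$ is bounded, by the definition \eqref{eqn:fdelta} together with the triangle inequality and $\|Mx\|\le\|M\|\,\|x\|$, by $f_\delta(A_j,s_j)\,\|x_j\|$. Hence the ``outside-window'' part of any such increment is dominated, uniformly in $t_1,t_2$, by the single nonnegative random variable
\[Y_\delta:=\int_{M_d^-}\int_\real f_\delta(A,s)\,\Lambda^{(2)}_T(dA,ds)=\sum_j f_\delta(A_j,s_j)\,\|x_j\|,\]
where $\Lambda^{(2)}_T$ is the norm-transformed Lévy basis. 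Since $f_\delta$ satisfies \eqref{eqn:xt2ex} by assumption, $Y_\delta$ exists $\omega$-wise as a Lebesgue integral by Proposition \ref{prop:xt2exist}, and since $f_\delta\in\mathbb{L}^\alpha$ with $\nu\in\rvan[\mu_\nu]$, the infinitely divisible tail equivalence underlying Theorem \ref{thm:dd:regvarMMA} gives $\limsup_{n} nP(a_n^{-1}Y_\delta\ge\varepsilon)\le \mathrm{const}\cdot\varepsilon^{-\alpha}\int_{M_d^-}\int_\real f_\delta(A,s)^\alpha\,ds\,\pi(dA)$. Condition \eqref{eqn:funcregvarmmabed} then forces this to tend to $0$ as $\delta\to0$, which disposes of the continuous part of all three functionals.

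It remains to control the jumps. For the endpoint conditions \eqref{eqn:dd:relcomp1} and \eqref{eqn:dd:relcomp2}, a single large jump inside $[0,\delta)$ (resp.\ $[1-\delta,1)$) already contributes to $w(X^{(2)},\cdot)$; but as $f$ is bounded by $C$, a jump of size $\ge a_n\varepsilon$ forces an atom with $\|x_j\|\gtrsim a_n\varepsilon/(2C)$ and $s_j$ in a window of length $\delta$, an event of probability at most $\delta\,\nu(\{\|x\|>c\,a_n\})$. By regular variation $n\,\nu(\{\|x\|>c\,a_n\})\to\mathrm{const}$, so $n$ times this probability is $\lesssim\delta$, small for $\delta$ small, while the residual small-jump mass has mean $o(a_n)$ and is discarded by a Markov estimate. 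For the interior condition \eqref{eqn:dd:relcomp3} I would exploit the minimum in $w''$: decomposing $\|X_t^{(2)}-X_{t_1}^{(2)}\|$ and $\|X_{t_2}^{(2)}-X_t^{(2)}\|$ again into outside parts ($\le Y_\delta$) and inside parts supported on the disjoint sub-windows $(t_1,t]$ and $(t,t_2]$, a single large jump in $(t_1,t_2]$ lands in only one sub-window and is annihilated by the minimum. Thus $w''(X^{(2)},\delta)$ can be large only if two $\varepsilon$-large jumps occur within a common $\delta$-window, an event of probability of order $\lambda_n^2\delta$ with $\lambda_n=\nu(\{\|x\|>c\,a_n\})$; since $n\lambda_n^2\delta=(n\lambda_n)\,\lambda_n\,\delta\to0$ for each fixed $\delta$, the jump part of \eqref{eqn:dd:relcomp3} vanishes. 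Choosing $\delta$ small (to kill the $Y_\delta$-tails and the endpoint single-jump term) and then $n_0$ large yields the three bounds by $\eta$.

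The main obstacle I anticipate is the clean separation of ``continuous oscillation'' from ``jumps'' inside a window for the $w''$-estimate: after removing the $Y_\delta$-dominated part one must show that the residual inside-window contribution of the finitely many \emph{small} atoms cannot, together, reach level $a_n\varepsilon$ on both sides of the split, so that only the two-large-jumps event survives. This couples the minimum structure of $w''$ with the regular-variation tail estimate for the truncated jump sum (whose inside-window mean is $o(a_n)$), and is the delicate quantitative step.
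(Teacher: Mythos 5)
Your overall architecture coincides with the paper's: the same split of each increment $X_{t_1}^{(2)}-X_{t_2}^{(2)}$ into an inside-window part (atoms with $s_j\in(t_1,t_2]$) and an outside-window part dominated via \eqref{eqn:fdelta}, and your treatment of the outside part is exactly the paper's -- both bound its supremum by the single random variable $Y=\int_{M_d^-}\int_\real f_\delta(A,s)\,\Lambda^{(2,T)}(dA,ds)$, apply Theorem \ref{thm:dd:regvarMMA} to get $Y\in\rvan[\mu_Y]$ with total mass proportional to $\int f_\delta^\alpha\,ds\,\pi(dA)$, and let $\delta\to0$. The divergence, and the gap, is in the inside-window part. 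The paper does not compute with the compound Poisson atoms at all: it bounds $\|Z^{(1)}_{t_1,t_2}\|\le 2C\,(L^{(2,T)}_{t_2}-L^{(2,T)}_{t_1})$ for the transformed subordinator $L^{(2,T)}_t=\Lambda^{(2,T)}(M_d^-\times(0,t])$, notes that $L_1^{(2,T)}\in\rvan$ implies $(L_t^{(2,T)})\in\rvdd$ by Example \ref{ex:regvarLevyD} (the Hult--Lindskog theory for Markov processes), and then reads off all three relative compactness conditions for $L^{(2,T)}$ from Theorem \ref{thm:dd:charregvarD}; the minimum structure in \eqref{eqn:dd:relcomp3} is inherited verbatim because $\min\{\|Z^{(1)}_{t,t_2}\|,\|Z^{(1)}_{t_1,t}\|\}\le 2C\min\{L^{(2,T)}_{t_2}-L^{(2,T)}_{t},L^{(2,T)}_{t}-L^{(2,T)}_{t_1}\}$.

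You instead try to prove these Lévy-process estimates from scratch, and the step you gloss over is exactly where the argument breaks. To verify \eqref{eqn:dd:relcomp1} you need $\limsup_n nP(a_n^{-1}S_\delta\ge c)\lesssim\delta$ for the compound Poisson sum $S_\delta=\sum_{s_j\in(0,\delta)}\|x_j\|$; your split into ``one large atom'' plus ``residual small-jump mass discarded by a Markov estimate'' does not deliver this. A first-moment Markov bound with mean $o(a_n)$ only gives $P(\cdot)=o(1)$, whereas you need a bound of order $\delta/n$; moreover for $\alpha\le1$ the mean of $\|x\|$ under $\nu_2$ need not exist, and for $\alpha\in(1,2)$ one has $nE[S_\delta]/a_n=n\delta\,\mathrm{const}/a_n\to\infty$. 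What is actually needed is the single-big-jump (subexponentiality) asymptotic $P(S_\delta>a_n c)\sim\delta\,\nu_2^T((c a_n,\infty))$ for regularly varying compound Poisson sums, or equivalently the observation that $S_\delta=L^{(2,T)}_{\delta^-}$ is itself an infinitely divisible, regularly varying random variable so that Proposition 3.1 of \cite{hult:lindskog:2006a} applies -- which is, in effect, the paper's route. The same defect infects your two-large-jumps estimate for \eqref{eqn:dd:relcomp3}: the event that the accumulated small atoms push \emph{both} sub-windows over level $a_n\varepsilon$ is precisely the ``delicate quantitative step'' you flag but do not close, and it is closed for free once you delegate to the functional regular variation of $(L^{(2,T)}_t)$ via Theorem \ref{thm:dd:charregvarD} rather than re-deriving it.
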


\begin{proof}
%We start by observing that the term $X_t^{(2)}$ can be written in the form
%\[X_t^{(2)} = \int\limits_{\real^d}\int\limits_{M_d^-}\int\limits_{\real} f (A,t - s) \,x\, N(dx, dA, ds),\]
%where $N$ is a Poisson random measure with mean measure $\nu_2 \times \pi \times \lambda$. 
We define the difference function $g_{t_1, t_2} (A,s) := f(A, t_1 - s) - f(A, t_2 - s)$ and mention that for every $t_1, t_2 \in [0,1]$ the random vector
\[X_{t_1}^{(2)} - X_{t_2}^{(2)} = \int\limits_{\|x\| \geq 1}\int\limits_{M_d^-}\int\limits_{\real} g_{t_1, t_2} (A,s) \;x\; N (dx, dA, ds)\]
is again MMA and by Theorem \ref{thm:dd:MMAex2} and Theorem \ref{thm:dd:regvarMMA} it exists and is regularly varying with index $\alpha$. %Using the L\'evy-It\^o decomposition we have two independent L\'evy bases $\Lambda_1$ and $\Lambda_2$ such that $\Lambda_1$ has generating quadruple $(\gamma,\Sigma, \nu_1, \pi)$ and $\Lambda_2$ has generating quadruple $(0, 0, \nu_2 , \pi)$, where $\nu_1 = \nu |_{B_1(0)}$ and $\nu_2 = \nu |_{B_1(0)^c}$. This yields
%\[X_{t_1} - X_{t_2} = \int\limits_{M_d^-}\int\limits_{\real} g_{t_1, t_2} (A,s) \Lambda_1 (dA, ds) + \int\limits_{M_d^-}\int\limits_{\real} g_{t_1, t_2} (A,s) \Lambda_2 (dA, ds) =: Y_{t_1, t_2}^{(1)} + Y_{t_1, t_2}^{(2)}.\]
%\emph{Condition \eqref{eqn:dd:relcomp1}}: We verify the condition by showing
%\[n P ( a_n^{-1} \sup\limits_{t_1, t_2 \in [0,\delta)} \; \|X_{t_1} - X_{t_2}\| \geq \varepsilon) \to 0\]
%for every $\varepsilon > 0$. Using the introduced notation, this term can be bounded by
%\begin{align}\label{eqn:PAbsch}
%&n P ( a_n^{-1} \sup\limits_{t_1, t_2 \in [0,\delta)} \; \|X_{t_1} - X_{t_2}\| \geq \varepsilon) \leq\notag\\
%&\zweiquad\leq n P ( a_n^{-1} \sup\limits_{t_1, t_2 \in [0,\delta)} \; \|Y_{t_1, t_2}^{(1)}\| \geq \varepsilon / 2) + n P ( a_n^{-1} \sup\limits_{t_1, t_2 \in [0,\delta)} \; \|Y_{t_1, t_2}^{(2)}\| \geq \varepsilon / 2).
%\end{align}
%Since the first summand has only finite moments, it does not influence the tail behavior of the regularly varying random variable $X_{t_1} - X_{t_2}$ (???? Probleme mit sup?????).\\
\par\emph{Condition \eqref{eqn:dd:relcomp1}}: We verify the condition by showing that, as $\delta \to 0$,
\[\lim\limits_{n \to \infty}\;n P ( a_n^{-1} \sup\limits_{t_1, t_2 \in [0,\delta)} \; \|X_{t_1}^{(2)} - X_{t_2}^{(2)}\| \geq \varepsilon) \to 0\]
for every $\varepsilon > 0$.
We use the decomposition
\begin{align}
&X_{t_1}^{(2)} - X_{t_2}^{(2)} =\notag\\
&\quad = \int\limits_{\|x\| \geq 1}\int\limits_{M_d^-}\int\limits_{(t_1, t_2]} g_{t_1, t_2} (A,s) \;x\; N (dx, dA, ds) + \int\limits_{\|x\| \geq 1}\int\limits_{M_d^-}\int\limits_{(t_1, t_2]^c} g_{t_1, t_2} (A,s) \;x\; N (dx, dA, ds) \notag \\
&\quad =: Z_{t_1, t_2}^{(1)} + Z_{t_1, t_2}^{(2)}\label{eqn:YZplusZ}
\end{align}
which yields
\begin{align}\label{eqn:PAbsch2}
&n P \Big( a_n^{-1} \sup\limits_{t_1, t_2 \in [0,\delta)} \; \|X_{t_1}^{(2)} - X_{t_2}^{(2)}\| \geq \varepsilon \Big) \leq\notag\\
&\zweiquad\leq n P \Big( a_n^{-1} \sup\limits_{t_1, t_2 \in [0,\delta)} \; \|Z_{t_1, t_2}^{(1)}\| \geq \varepsilon / 2\Big) + n P \Big( a_n^{-1} \sup\limits_{t_1, t_2 \in [0,\delta)} \; \|Z_{t_1, t_2}^{(2)}\| \geq \varepsilon / 2 \Big).
\end{align}
%The term $Z_{t_1, t_2}^{(1)}$ can be written in the form
%\[Z_{t_1, t_2}^{(1)} = \int\limits_{\real^d}\int\limits_{M_d^-}\int\limits_{(t_1,t_2]} g_{t_1, t_2} (A,s) \,x\, N(dx, dA, ds),\]
%where $N$ is a Poisson random measure with mean measure $\nu_2 \times \pi \times \lambda$. 
With ${\nu_2} = \nu |_{B_1(0)^c}$ and using the transformation $T: \reald \to \real$ given by $T(x) = \|x\|$ together with the boundedness $f(A,s) \leq C$ for all $(A, s) \in M_d^- \times \real$ we can now calculate
\begin{align}\label{eqn:Z1Absch}
\| Z_{t_1, t_2}^{(1)} \| &\leq  \int\limits_{\| x \| \geq 1}\int\limits_{M_d^-}\int\limits_{(t_1,t_2]}  \| g_{t_1, t_2} (A,s)\| \,\|x\|\, N(dx, dA, ds)\notag\\
&\leq 2\; C\; \Lambda^{(2,T)} (M_d^- \times (t_1,t_2])\notag\\
&= 2\; C\; (L^{(2,T)}_{t_2} - L^{(2,T)}_{t_1}),
\end{align}
where $\Lambda^{(2,T)}$ is a L\'evy basis with generating quadruple $(0, 0, {\nu_2}^T  , \pi)$ and the transformed L\'evy measure ${\nu_2}^T$ is given by ${\nu_2}^T (\cdot) = {\nu_2} (T^{-1} (\cdot))$. By $(L_t^{(2,T)})$ we denote the underlying Lévy process given by $L_t^{(2,T)} = \Lambda^{(2,T)} (M_d^- \times (0,t])$ for $t > 0$. Using a continuous mapping argument similar to Theorem \ref{thm:dd:contmapD} we see that $\nu \in \rvan[\mu_\nu]$ implies ${\nu_2}^T \in \rvan[\mu_{\nu^T}]$ with $\mu_{\nu^T}$ defined respectively. Thus by Proposition 3.1 in \cite{hult:lindskog:2006a} $L_1^{(2,T)} \in \rvan[\mu_{\nu^T}]$ and then by Example \ref{ex:regvarLevyD} also $(L_t^{(2,T)}) \in \rvdd[\schl{\mu}]$ for some measure $\schl{\mu}$. Now another application of Theorem \ref{thm:dd:charregvarD} yields that condition \eqref{eqn:dd:relcomp1} holds for the process $(L_t^{(2,T)})$ and hence, as $\delta \to 0$,
\begin{align*}
\lim\limits_{n \to \infty}n P \Big( a_n^{-1} \sup\limits_{t_1, t_2 \in [0,\delta)} \|Z_{t_1, t_2}^{(1)}\| \geq \varepsilon / 2 \Big) &\leq \lim\limits_{n \to \infty} n P \Big( a_n^{-1} \sup\limits_{t_1, t_2 \in [0,\delta)} (L^{(2,T)}_{t_2} - L^{(2,T)}_{t_1}) \geq \varepsilon /  (4 C) \Big)\\&\to 0.
\end{align*}
Similarly, the supremum of the second term $Z_{t_1, t_2}^{(2)}$ can be bounded by
\begin{align*}
\sup\limits_{t_1, t_2 \in [0,\delta)} \;\| Z_{t_1, t_2}^{(2)} \| &\leq  \int\limits_{\|x\| \geq 1}\int\limits_{M_d^-}\int\limits_{\real} \sup\limits_{t_1, t_2 \in [0,\delta)} \| g_{t_1, t_2} (A,s)\|\,\id_{(t_1, t_2]^c}(s) \,\|x\|\, N(dx, dA, ds)\\
&\leq \int\limits_{\|x\| \geq 1}\int\limits_{M_d^-}\int\limits_{\real} f_\delta (A,s) \,\|x\|\, N(dx, dA, ds)\\
&= \int\limits_{M_d^-}\int\limits_{\real} f_\delta (A,s) \Lambda^{(2,T)} (dA, ds) =: Y.
\end{align*}
Then assumption \eqref{eqn:funcregvarmmabed} implies $f_\delta \in \mathbb{L}^\alpha$ for some $\delta > 0$ sufficiently small and another application of Theorem \ref{thm:dd:regvarMMA} yields $Y \in \rvan[\mu_{Y}]$ with
\[\mu_{Y} (B) := \int\limits_{M_d^-} \int\limits_{\real} \int\limits_{\reald} \id_{B} \left(f_\delta (A,s) \|x\| \right) \mu_\nu (d x) d s \pi (d A).\]
Finally, as $n \to \infty$, we obtain
\begin{align*}
n P \Big( a_n^{-1} \sup\limits_{t_1, t_2 \in [0,\delta)} \; \|Z_{t_1, t_2}^{(2)}\| \geq \varepsilon / 2 \Big) \,&\leq\, n P ( a_n^{-1} \,Y \geq \varepsilon / 2 )\\
&\stackrel{n\to\infty}{\to} \int\limits_{M_d^-} \int\limits_{\real} \mu_\nu ( x: f_\delta (A,s) \|x\| \geq \varepsilon / 2)\; d s\, \pi (d A)\\
&= \mu_\nu ( x: \|x\| \geq \varepsilon / 2) \int\limits_{M_d^-} \int\limits_{\real} f_\delta (A,s)^\alpha\, d s\, \pi (d A)\stackrel{\delta\to0}{\to} 0
\end{align*}
and since $\mu_\nu$ is a Radon measure the result follows by the assumption.\par
\emph{Condition \eqref{eqn:dd:relcomp2}:} The condition follows likewise to condition \eqref{eqn:dd:relcomp1} (note also that the MMA process $(X_t)$ is stationary).\par
\emph{Condition \eqref{eqn:dd:relcomp3}:} For the third condition we %proceed
%\begin{align*}
%&\sup\limits_{t_1\leq t \leq t_2;\; t_2 - t_1 \leq \delta} \min \{ \|X_{t_2} - X_{t}\|, \|X_{t} - X_{t_1}\|\}\leq\\
%&\zweiquad\leq\sup\limits_{t_1\leq t \leq t_2;\; t_2 - t_1 \leq \delta} \min \{ \|Y_{t, t_2}^{(1)}\| + \|Y_{t, t_2}^{(2)}\|, \|Y_{t_1, t}^{(1)}\| + \|Y_{t_1, t}^{(2)}\|\}\\
%&\zweiquad\leq\sup\limits_{t_1\leq t \leq t_2;\; t_2 - t_1 \leq \delta} \|Y_{t, t_2}^{(1)}\| + \sup\limits_{t_1\leq t \leq t_2;\; t_2 - t_1 \leq \delta} \|Y_{t_1, t}^{(1)}\| + \sup\limits_{t_1\leq t \leq t_2;\; t_2 - t_1 \leq \delta} \min \{ \|Y_{t, t_2}^{(2)}\|, \|Y_{t_1, t}^{(2)}\|\}\\
%&\zweiquad= 2 \sup\limits_{t_1 \leq t_2;\; t_2 - t_1 \leq \delta} \|Y_{t_1, t_2}^{(1)}\| + \sup\limits_{t_1\leq t \leq t_2;\; t_2 - t_1 \leq \delta} \min \{ \|Y_{t, t_2}^{(2)}\|, \|Y_{t_1, t}^{(2)}\|\}
%\end{align*}
%and thus
%\begin{align}\label{eqn:PAbsch3}
%&n P ( a_n^{-1} \sup\limits_{t_1\leq t \leq t_2;\; t_2 - t_1 \leq \delta} \min \{ \|X_{t_2} - X_{t}\|, \|X_{t} - X_{t_1}\|\} \geq \varepsilon) \leq\\
%&\leq n P ( a_n^{-1} \sup\limits_{t_1 \leq t_2;\; t_2 - t_1 \leq \delta} \|Y_{t_1, t_2}^{(1)}\| \geq \varepsilon / 4) + n P ( a_n^{-1} \sup\limits_{t_1\leq t \leq t_2;\; t_2 - t_1 \leq \delta} \min \{ \|Y_{t, t_2}^{(2)}\|, \|Y_{t_1, t}^{(2)}\|\} \geq \varepsilon / 2).\notag
%\end{align}
%The first summand now again...\par
use \eqref{eqn:YZplusZ} again and obtain
\begin{align*}
&\sup\limits_{t_1\leq t \leq t_2;\; t_2 - t_1 \leq \delta} \min \big\{ \|X_{t}^{(2)} - X_{t_2}^{(2)} \|, \|X_{t_1}^{(2)} - X_{t}^{(2)}\| \big\} \leq\\
&\achtquad \leq \sup\limits_{t_1\leq t \leq t_2;\; t_2 - t_1 \leq \delta} \min \big\{ \|Z_{t, t_2}^{(1)}\|, \|Z_{t_1, t}^{(1)}\| \big\} +  \sup\limits_{t_1 \leq t_2;\; t_2 - t_1 \leq \delta} \|Z_{t_1, t_2}^{(2)}\|
\end{align*}
and
\begin{align*}
&n P \Big( a_n^{-1} \sup\limits_{t_1\leq t \leq t_2;\; t_2 - t_1 \leq \delta} \min \big\{ \|X_{t}^{(2)} - X_{t_2}^{(2)}\|, \|X_{t_1}^{(2)} - X_{t}^{(2)}\|\big\} \geq \varepsilon \Big) \leq\\
&\leq n P \Big( a_n^{-1} \sup\limits_{t_1\leq t \leq t_2; t_2 - t_1 \leq \delta} \min \big\{ \|Z_{t, t_2}^{(1)}\|, \|Z_{t_1, t}^{(1)}\| \big\} \geq \frac{\varepsilon}{2} \Big) + n P \Big( a_n^{-1}  \sup\limits_{t_1 \leq t_2; t_2 - t_1 \leq \delta} \|Z_{t_1, t_2}^{(2)}\| \geq \frac{\varepsilon}{4} \Big).
\end{align*}
Applying \eqref{eqn:Z1Absch} this implies, as $\delta \to 0$,
\begin{align*}
&\lim\limits_{n \to \infty} n P \Big( a_n^{-1} \sup\limits_{t_1\leq t \leq t_2;\; t_2 - t_1 \leq \delta} \min \big\{ \|Z_{t, t_2}^{(1)}\|, \|Z_{t_1, t}^{(1)}\|\big\} \geq \varepsilon / 2 \Big)\\
&\zweiquad\leq \lim\limits_{n \to \infty}n P \Big( a_n^{-1} \sup\limits_{t_1\leq t \leq t_2;\; t_2 - t_1 \leq \delta} \min \big\{ \|L_{t_2}^{(2,T)} - L_{t}^{(2,T)}\|, \|L_{t}^{(2,T)} - L_{t_1}^{(2,T)}\|\big\} \geq \varepsilon / (4\;C)\Big)\\
&\zweiquad \to 0,
\end{align*}
since this is exactly condition \eqref{eqn:dd:relcomp3} for the Lévy process $L_{t}^{(2,T)}$ which is regularly varying in $\dd$ and thus by Theorem \ref{thm:dd:charregvarD} satisfies \eqref{eqn:dd:relcomp3}. Furthermore,
\[\sup\limits_{t_1 \leq t_2;\; t_2 - t_1 \leq \delta} \|Z_{t_1, t_2}^{(2)}\| \leq \int\limits_{M_d^-}\int\limits_{\real} f_\delta (A,s) \Lambda^{(2,T)} (dA, ds) = Y\]
and consequently, as $\delta \to 0$,
\[\lim\limits_{n \to \infty} n P \Big( a_n^{-1}  \sup\limits_{t_1 \leq t_2;\; t_2 - t_1 \leq \delta} \|Z_{t_1, t_2}^{(2)}\| \geq \varepsilon / 4\Big) \leq n P \Big( a_n^{-1} \,Y \geq \varepsilon / 4\Big) \to 0\]
as shown for condition \eqref{eqn:dd:relcomp1}.
\end{proof}

This concludes the proof of Theorem \ref{thm:funcregvarmma}.

\section{Application to SupOU Processes}\label{sec:dd:supOU}

Superpositions of Ornstein-Uhlenbeck processes (supOU processes) have useful properties and a wide range of applications. A supOU process $(X_t)$ can be defined as an MMA process with kernel function
\begin{equation}\label{eqn:supOU}
f(A,s) = e^{As} \id_{[0,\infty)} (s).
\end{equation}
We will shortly recall the main results of \cite{barndorff:stelzer:2011a} and \cite{moser:stelzer:2011}. Sufficient conditions for the existence of supOU processes are given in the following theorem which takes the special properties of supOU processes into account.

\begin{theorem}[\cite{barndorff:stelzer:2011a}, Theorem 3.1]\label{thm:pp:supOUex}
$\;$ Let $X_t$ be an $\reald$-valued supOU process as defined by \eqref{eqn:supOU}. If
\[\int_{\|x\|>1} \ln (\|x\|) \nu (dx) < \infty\]
and there exist measurable functions $\rho: M_d^- \mapsto \real^+ \wo \{0\}$ and $\kappa: M_d^- \mapsto [1, \infty)$ such that
\[\left\| e^{A s} \right\| \leq \kappa(A) e^{- \rho (A) s}\; \forall s \in \real^+\mbox{ $\pi$-almost surely and }  \int\limits_{M_d^-} \frac{\kappa (A)^2}{\rho (A)} \pi (d A) < \infty,\]
then the supOU process $X_t = \int_{M_d^-} \int_{-\infty}^t e^{A (t - s)} \Lambda (dA, ds)$ is well defined for all $t \in \real$ and stationary. Furthermore, the stationary distribution of $X_t$ is infinitely divisible with characteristic triplet $(\gamma_X, \Sigma_X, \nu_X)$ given by Theorem \ref{thm:dd:MMAex}.
\end{theorem}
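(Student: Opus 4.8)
The plan is to deduce the statement from the general existence criterion Theorem~\ref{thm:dd:MMAex}: a supOU process is precisely the MMA process with kernel \eqref{eqn:supOU}, so it suffices to verify that $f(A,s)=e^{As}\id_{[0,\infty)}(s)$ satisfies the three integrability conditions \eqref{eqn:dd:exbed1}, \eqref{eqn:dd:exbed2} and \eqref{eqn:dd:exbed3}. Because $\Lambda$ is time-homogeneous and factorises as $\Pi=\pi\times\lambda$ in the time variable, the substitution $u=t-s$ turns every $s$-integral over $\real$ into the corresponding $u$-integral of $e^{Au}\id_{[0,\infty)}(u)$; hence it is enough to estimate integrals over $u\in[0,\infty)$ in which $\|f(A,u)\|=\|e^{Au}\|\le\kappa(A)e^{-\rho(A)u}$. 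The entire proof then amounts to feeding this exponential bound, together with $\int_{M_d^-}\kappa(A)^2/\rho(A)\,\pi(dA)<\infty$, the log-moment $\int_{\|x\|>1}\ln\|x\|\,\nu(dx)<\infty$, and the standard Lévy-measure facts $\int_{\|x\|\le1}\|x\|^2\nu(dx)<\infty$ and $\nu(\{\|x\|>1\})<\infty$, into the three conditions.

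The Gaussian term \eqref{eqn:dd:exbed2} is immediate, since $\|f(A,s)\Sigma f(A,s)^T\|\le\|\Sigma\|\,\kappa(A)^2e^{-2\rho(A)s}$, whose integral is $\tfrac12\|\Sigma\|\int_{M_d^-}\kappa(A)^2/\rho(A)\,\pi(dA)<\infty$. For the jump term \eqref{eqn:dd:exbed3} I would split the inner integral at $\|x\|=1$. On $\{\|x\|\le1\}$ one bounds $1\wedge\|f(A,s)x\|^2\le\kappa(A)^2e^{-2\rho(A)s}\|x\|^2$ and integrates to obtain $\tfrac12\int_{M_d^-}\kappa(A)^2/\rho(A)\,\pi(dA)\int_{\|x\|\le1}\|x\|^2\nu(dx)$. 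On $\{\|x\|>1\}$ the decisive estimate is the $s$-integral $\int_0^\infty\big(1\wedge \kappa(A)^2e^{-2\rho(A)s}\|x\|^2\big)\,ds$: splitting at the threshold $s^\ast=\rho(A)^{-1}\ln(\kappa(A)\|x\|)$ bounds it by $\rho(A)^{-1}\big(\ln(\kappa(A)\|x\|)+\tfrac12\big)$. Writing $\ln(\kappa(A)\|x\|)=\ln\kappa(A)+\ln\|x\|$ and using $\ln\kappa(A)\le\kappa(A)^2$ (as $\kappa\ge1$) makes each resulting term integrable against $\pi\otimes\nu|_{\{\|x\|>1\}}$, the $\ln\|x\|$ piece being exactly where the log-moment assumption enters.

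The main obstacle I expect is the drift condition \eqref{eqn:dd:exbed1}. The piece $\int\|f(A,s)\gamma\|\,ds\,\pi(dA)\le\|\gamma\|\int_{M_d^-}\kappa(A)/\rho(A)\,\pi(dA)$ is harmless, so everything reduces to the truncation-difference integrand $h(A,s)=\int_{\reald}f(A,s)x\big(\id_{[0,1]}(\|f(A,s)x\|)-\id_{[0,1]}(\|x\|)\big)\nu(dx)$. The two indicators disagree only on $\{\|x\|>1,\ \|f(A,s)x\|\le1\}$ and on $\{\|x\|\le1,\ \|f(A,s)x\|>1\}$. On the first set $\|h\|$ is dominated by $\int_{\|x\|>1}(1\wedge\|f(A,s)x\|)\,\nu(dx)$, whose $s$-integral has the form $\rho(A)^{-1}(\ln(\kappa(A)\|x\|)+1)$ and is finite by the same log-moment argument. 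The delicate set is the second one, which is a priori worrying because $\int_{\|x\|\le1}\|x\|\,\nu(dx)$ may diverge. The saving observation is that $\|f(A,s)x\|>1$ forces $\|x\|>e^{\rho(A)s}/\kappa(A)$, so on this set $\|x\|\le\kappa(A)e^{-\rho(A)s}\|x\|^2$; consequently $\|f(A,s)x\|\le\kappa(A)e^{-\rho(A)s}\|x\|\le\kappa(A)^2e^{-2\rho(A)s}\|x\|^2$, and integrating this nonnegative bound over all of $\{\|x\|\le1\}$ and $s\ge0$ yields $\tfrac12\int_{M_d^-}\kappa(A)^2/\rho(A)\,\pi(dA)\int_{\|x\|\le1}\|x\|^2\nu(dx)<\infty$. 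This converts an apparent first-moment difficulty into the always-finite second moment of small jumps and closes \eqref{eqn:dd:exbed1}.

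Once the three conditions are established, Theorem~\ref{thm:dd:MMAex} yields that $X_t$ is well defined and infinitely divisible with the stated triplet $(\gamma_X,\Sigma_X,\nu_X)$. Stationarity is the last and softer point: since the kernel depends on $t$ and $s$ only through $t-s$, while $\Pi=\pi\times\lambda$ is invariant under time shifts and $\Lambda$ is independently scattered and time-homogeneous, the joint characteristic function of $(X_{t_1+h},\dots,X_{t_k+h})$ is, after the substitution $s\mapsto s+h$, seen to be independent of $h$. Hence all finite-dimensional distributions are shift-invariant, which is precisely stationarity.
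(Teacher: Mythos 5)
Your argument is correct: the paper itself gives no proof of this statement (it is quoted verbatim from \cite{barndorff:stelzer:2011a}, Theorem 3.1), and the cited source establishes it in essentially the way you do, by verifying the three Rajput--Rosi\'nski conditions of Theorem \ref{thm:dd:MMAex} for the kernel $e^{As}\id_{[0,\infty)}(s)$ via the bound $\|e^{As}\|\le\kappa(A)e^{-\rho(A)s}$, splitting the L\'evy measure at $\|x\|=1$ and using the log-moment condition for the large jumps. Your treatment of the truncation-difference term in \eqref{eqn:dd:exbed1} (converting the small-jump contribution on $\{\|x\|\le 1,\ \|f(A,s)x\|>1\}$ into the finite second moment) and the threshold computation at $s^\ast=\rho(A)^{-1}\ln(\kappa(A)\|x\|)$ are exactly the right ingredients.
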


Conditions for regular variation of $X_t$ and of the finite-dimensional distributions of $(X_t)$ are given by the following result.

\begin{corollary}[\cite{moser:stelzer:2011}, Cor. 4.3 and Cor. 4.6]\label{cor:pp:regvarsupOU}
Let $\Lambda \in \reald$ be a Lévy basis on $M_d^- \times \real$ with generating quadruple $(\gamma, \Sigma, \nu, \pi)$ and let $\nu \in \rvan[\mu_\nu]$. If the conditions of Theorem \ref{thm:pp:supOUex} hold and additionally
\[\int\limits_{M_d^-} \frac{\kappa(A)^{\alpha}}{\rho (A)}\; \pi (d A) < \infty,\]
then $X_0 = \int_{M_d^-}\int_{\real^+} e^{As} \Lambda (dA, ds) \in \rvan[\mu_X]$ with Radon measure
\[\mu_X (\cdot) := \int\limits_{M_d^-} \int\limits_{\real^+} \int\limits_{\reald} \id_{(\cdot)} \left(e^{As}x\right) \mu_\nu (d x) d s \pi (d A).\]
Furthermore, the finite dimensional distributions $( X_{t_1}, \ldots, X_{t_k} )$, $t_i \in \real$ and $k \in \nat$, are also regularly varying with index $\alpha$ and given limiting measure $\mu_{t_1, \ldots, t_k}$.
\end{corollary}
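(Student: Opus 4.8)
The plan is to realize the supOU process as the special MMA process with kernel $f(A,s)=e^{As}\,\id_{[0,\infty)}(s)$ and then invoke Theorem~\ref{thm:dd:regvarMMA} directly. Concretely, I would verify the three hypotheses of that theorem: (a) $X_0$ exists in the sense of Theorem~\ref{thm:dd:MMAex}; (b) $f\in\mathbb{L}^\alpha(\lambda\times\pi)$; and (c) the non-degeneracy requirement that $\mu_\nu(f^{-1}(A,s)(\real^d\setminus\{0\}))=0$ fails on a $\pi\times\lambda$--non-null set. Existence (a) is immediate from Theorem~\ref{thm:pp:supOUex}, whose hypotheses are assumed here.

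The only genuine computation is the integrability (b). Since $f(A,s)=0$ for $s<0$, one has
\[
\int_{M_d^-}\int_\real \|f(A,s)\|^\alpha\,ds\,\pi(dA)
=\int_{M_d^-}\int_0^\infty \|e^{As}\|^\alpha\,ds\,\pi(dA).
\]
Inserting the exponential bound $\|e^{As}\|\le \kappa(A)e^{-\rho(A)s}$, which holds $\pi$--almost surely by Theorem~\ref{thm:pp:supOUex}, and evaluating the inner integral gives
\[
\int_{M_d^-}\int_0^\infty \|e^{As}\|^\alpha\,ds\,\pi(dA)
\le \int_{M_d^-}\kappa(A)^\alpha\int_0^\infty e^{-\alpha\rho(A)s}\,ds\,\pi(dA)
=\frac{1}{\alpha}\int_{M_d^-}\frac{\kappa(A)^\alpha}{\rho(A)}\,\pi(dA),
\]
which is finite precisely by the additional hypothesis of the corollary. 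This is the step where the extra assumption $\int_{M_d^-}\kappa(A)^\alpha/\rho(A)\,\pi(dA)<\infty$ is consumed; it is the one quantitative estimate, and everything else is pure specialization of the general theorem.

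For (c), note that for every $s>0$ the matrix $e^{As}$ is invertible, so $f^{-1}(A,s)(\real^d\setminus\{0\})=\real^d\setminus\{0\}$, which carries strictly positive $\mu_\nu$--mass because $\mu_\nu$ is nonzero. Hence the degeneracy condition fails for every $(A,s)$ with $s>0$, i.e.\ on a $\pi\times\lambda$--non-null set. Theorem~\ref{thm:dd:regvarMMA} then yields $X_0\in\rvan[\mu_X]$, and the displayed formula for $\mu_X$ drops out of the general limit measure: the $s$--integral over $\real$ collapses to $\real^+$ because $f(A,s)$ vanishes for $s<0$, where otherwise $f(A,s)=e^{As}$.

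Finally, the finite-dimensional statement need not be argued separately: it follows from the corresponding assertion of Theorem~\ref{thm:dd:regvarMMA}, since the same two inputs (membership in $\mathbb{L}^\alpha$ and non-degeneracy) hold verbatim for the vector-valued kernel governing $(X_{t_1},\dots,X_{t_k})$, delivering regular variation of the fidis with index $\alpha$ and an explicit limiting measure $\mu_{t_1,\dots,t_k}$. I expect no real obstacle here; the substantive work was already absorbed into the general Theorem~\ref{thm:dd:regvarMMA}, and the only nontrivial verification is the $\mathbb{L}^\alpha$ estimate above.
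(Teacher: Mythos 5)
Your proposal is correct and is essentially the intended derivation: the paper states this corollary as an imported result from \cite{moser:stelzer:2011} without reproving it, and the proof there is exactly your specialization of Theorem~\ref{thm:dd:regvarMMA} to the kernel $f(A,s)=e^{As}\id_{[0,\infty)}(s)$, with the $\mathbb{L}^\alpha$ membership supplied by the bound $\int_0^\infty\|e^{As}\|^\alpha\,ds\le\kappa(A)^\alpha/(\alpha\rho(A))$ and the non-degeneracy supplied by invertibility of $e^{As}$. No gaps.
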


In order to apply Theorem \ref{thm:funcregvarmma} to obtain conditions for regular variation of supOU processes in $\dd$, we state some useful sufficient conditions for the function
\[f_{\delta} (A,s) = \sup_{t_1 \leq t_2;\; t_2 - t_1 \leq \delta} \| f(A, t_2 - s) - f(A, t_1 - s) \|\; \id_{(t_1,t_2]^c}\, (s)\]
to be an element of $\mathbb{L}^\alpha$ for $\alpha > 0$.

\begin{proposition}\label{prop:llsupOU}
Let $f(A,s) = e^{As} \id_{[0,\infty)} (s)$ be the kernel function of a supOU process satisfying the conditions of Theorem \ref{thm:pp:supOUex} and let $f_\delta$ be given by \eqref{eqn:fdelta}. If for some $\alpha > 0$ 
\[\int\limits_{M_d^-} \frac{\kappa(A)^\alpha}{\rho (A)} \;\pi (dA) < \infty,\]
and
\[\int\limits_{M_d^-} \kappa(A)^\alpha\;\pi (dA) < \infty,\]
then $f_\delta \in \mathbb{L}^\alpha$.
\end{proposition}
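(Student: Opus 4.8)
The plan is to exploit the explicit supOU kernel $f(A,s)=e^{As}\id_{[0,\infty)}(s)$ to reduce $f_\delta$ to a one-parameter supremum and then to estimate it via the exponential bound $\|e^{As}\|\le\kappa(A)e^{-\rho(A)s}$ furnished by Theorem \ref{thm:pp:supOUex}. First I would pin down the role of the indicator $\id_{(t_1,t_2]^c}(s)$. For $t_1\le t_2$ in $[0,1]$ one has $f(A,t_i-s)=e^{A(t_i-s)}\id_{(-\infty,t_i]}(s)$, so when $t_1<s\le t_2$ the indicator $\id_{(t_1,t_2]^c}(s)$ vanishes, when $s>t_2$ both kernel values vanish, and only for $s\le t_1$ does the summand survive and equal $e^{A(t_2-s)}-e^{A(t_1-s)}$. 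Hence $f_\delta(A,s)=0$ for $s>1$, while for $s\le 1$
\[
f_\delta(A,s)=\sup\big\{\,\|e^{A(t_2-s)}-e^{A(t_1-s)}\|:\ s\le t_1\le t_2\le 1,\ t_2-t_1\le\delta\,\big\}.
\]

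The key estimate is where I would deviate from the obvious route. Factoring $e^{A(t_2-s)}-e^{A(t_1-s)}=e^{A(t_1-s)}(e^{A(t_2-t_1)}-I)$ and bounding each factor by $\kappa(A)$ would cost a factor $\kappa(A)^2$, which is \emph{not} controlled by the hypotheses. Instead I would apply the triangle inequality directly: since $s\le t_1\le t_2$, both exponents $t_i-s$ are nonnegative and $t_2-s\ge t_1-s$, so
\[
\|e^{A(t_2-s)}-e^{A(t_1-s)}\|\le \|e^{A(t_2-s)}\|+\|e^{A(t_1-s)}\|\le 2\,\kappa(A)\,e^{-\rho(A)(t_1-s)}.
\]
Taking the supremum over admissible $t_1$ (the exponent is smallest, hence the value largest, at $t_1=\max(s,0)$) gives $f_\delta(A,s)\le 2\kappa(A)e^{\rho(A)s}$ for $s\le0$ and $f_\delta(A,s)\le 2\kappa(A)$ for $0<s\le1$, with $\kappa(A)$ now appearing only to the first power.

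It then remains to integrate. For fixed $A$, splitting the $s$-integral at $0$ and $1$,
\[
\int_\real f_\delta(A,s)^\alpha\,ds\le 2^\alpha\kappa(A)^\alpha\int_{-\infty}^0 e^{\alpha\rho(A)s}\,ds + 2^\alpha\kappa(A)^\alpha\int_0^1 ds=2^\alpha\kappa(A)^\alpha\Big(\frac{1}{\alpha\rho(A)}+1\Big).
\]
Integrating against $\pi$ and invoking $\int_{M_d^-}\kappa(A)^\alpha/\rho(A)\,\pi(dA)<\infty$ together with $\int_{M_d^-}\kappa(A)^\alpha\,\pi(dA)<\infty$ yields finiteness of $\int_{M_d^-}\int_\real f_\delta(A,s)^\alpha\,ds\,\pi(dA)$, i.e.\ $f_\delta\in\mathbb{L}^\alpha$. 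The main obstacle is conceptual rather than computational: recognizing that the naive multiplicative splitting overcounts the decay constant, and that the additive (triangle-inequality) bound is exactly what pairs the two hypotheses with, respectively, the bounded part of the $s$-integral ($0<s\le1$, yielding $\int\kappa^\alpha\,d\pi$) and its exponentially decaying left tail ($s\le0$, yielding $\int\kappa^\alpha/\rho\,d\pi$).
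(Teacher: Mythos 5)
Your proof is correct and follows essentially the same route as the paper's: the paper likewise splits the $s$-integral at $0$ (into $s\le 0$ and $0<s\le t_1$), applies the triangle inequality $\|e^{A(t_2-s)}-e^{A(t_1-s)}\|\le\|e^{A(t_2-s)}\|+\|e^{A(t_1-s)}\|\le 2\kappa(A)e^{-\rho(A)(t_1-s)}$ to keep only a first power of $\kappa(A)$, and matches the left tail to $\int\kappa^\alpha/\rho\,d\pi$ and the bounded part to $\int\kappa^\alpha\,d\pi$. Your explicit identification of the maximizing $t_1$ and your observation about why the multiplicative factorization would fail are accurate but do not change the substance of the argument.
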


\begin{proof}
We start with the observation
\begin{align}\label{eqn:fdeltadecompsupOU}
f_\delta (A,s)^\alpha &= \quad\; \sup\limits_{t_1 \leq t_2;\; t_2 - t_1 \leq \delta} \;\big\| e^{A (t_2 - s)} - e^{A (t_1 - s)}\big\|^\alpha \;\id_{(-\infty, t_1]} (s)\;\notag\\
&\leq \quad\; \sup\limits_{t_1 \leq t_2;\; t_2 - t_1 \leq \delta} \;\big\| e^{A (t_2 - s)} - e^{A (t_1 - s)}\big\|^\alpha \;\id_{(-\infty, 0]} (s)\;\notag\\
&\;\;\;\;\;+ \sup\limits_{t_1 \leq t_2;\; t_2 - t_1 \leq \delta} \;\big\| e^{A (t_2 - s)} - e^{A (t_1 - s)}\big\|^\alpha \;\id_{(0, t_1]} (s).
\end{align}
Now for the first summand we obtain
\begin{align*}
&\int\limits_{M_d^-} \int\limits_{\real} \sup\limits_{t_1 \leq t_2;\; t_2 - t_1 \leq \delta}\; \big\| e^{A (t_2 - s)} - e^{A (t_1 - s)} \big\|^\alpha \; \id_{(-\infty, 0]} (s) \;ds\; \pi(dA) \leq\\
&\achtquad \leq \int\limits_{M_d^-} \int\limits_{-\infty}^0 \sup\limits_{t_1 \leq t_2;\; t_2 - t_1 \leq \delta}\;  \big( \big\| e^{A (t_2 - s)}\big\| + \big\|e^{A (t_1 - s)}\big\|\big)^\alpha\;ds\; \pi(dA)\\
&\achtquad \leq \int\limits_{M_d^-} \int\limits_{-\infty}^0 \sup\limits_{t_1 \leq t_2;\; t_2 - t_1 \leq \delta}\;  \kappa(A)^\alpha \big(  e^{-\rho (A) (t_2 - s)} + e^{-\rho (A) (t_1 - s) } \big)^\alpha\;ds\; \pi(dA)\\
&\achtquad \leq 2^\alpha\; \int\limits_{M_d^-} \int\limits_{-\infty}^0 \kappa(A)^\alpha \;e^{s \rho (A) \alpha} \;ds \pi(dA)\\
&\achtquad = \frac{2^\alpha}{\alpha} \; \int\limits_{M_d^-}  \frac{\kappa(A)^\alpha}{\rho (A)} \; \pi(dA)\\
&\achtquad < \infty
\end{align*}
and the second summand of \eqref{eqn:fdeltadecompsupOU} can be bounded by
\begin{align*}
&\int\limits_{M_d^-} \int\limits_{\real} \sup\limits_{t_1 \leq t_2;\; t_2 - t_1 \leq \delta}\;  \big\| e^{A (t_2 - s)} - e^{A (t_1 - s)} \big\|^\alpha \; \id_{(0, t_1]} (s) \;ds\; \pi(dA) \leq\\
&\vierquad\zweiquad \leq \int\limits_{M_d^-} \int\limits_{\real} \sup\limits_{t_1 \leq t_2;\; t_2 - t_1 \leq \delta}\;  \big( \big\| e^{A (t_2 - s)}\big\| + \big\|e^{A (t_1 - s)}\big\|\big)^\alpha\; \id_{(0, t_1]} (s) \;ds\; \pi(dA)\\
&\vierquad\zweiquad  \leq \int\limits_{M_d^-} \int\limits_{\real} \sup\limits_{t_1 \leq t_2;\; t_2 - t_1 \leq \delta}\;  \kappa(A)^\alpha \big(  e^{-\rho (A) (t_2 - s)} + e^{-\rho (A) (t_1 - s) } \big)^\alpha\; \id_{(0, t_1]} (s) \;ds\; \pi(dA)\\
&\vierquad\zweiquad \leq  \int\limits_{M_d^-} \int\limits_{\real} \kappa(A)^\alpha \;2^\alpha\; \id_{[0, 1]} (s)  \;ds \pi(dA)\\
&\vierquad\zweiquad = 2^\alpha\; \int\limits_{M_d^-}  \kappa(A)^\alpha \; \pi(dA)\\
&\vierquad\zweiquad < \infty.
\end{align*}
%and finally, a similar computation for the third summand in \eqref{eqn:fdeltadecompsupOU} shows
%\begin{align*}
%&\int\limits_{M_d^-} \int\limits_{\real} \sup\limits_{t_1 \leq t_2;\; t_2 - t_1 \leq \delta}\; \big\|e^{A (t_2 - s)}\big\|^\alpha \;\id_{(t_1, t_2]} (s) \;ds\; \pi(dA) \leq\\
%&\achtquad \leq \int\limits_{M_d^-} \int\limits_{\real} \sup\limits_{t_1 \leq t_2;\; t_2 - t_1 \leq \delta}\; \kappa(A)^\alpha \; e^{-\rho (A) (t_2 - s) \alpha} \;\id_{(t_1, t_2]} (s) \;ds\; \pi(dA)\\
%&\achtquad \leq \int\limits_{M_d^-} \int\limits_{\real}  \kappa(A)^\alpha \;\id_{[0, 1]} (s) \;ds \pi(dA)\\
%&\achtquad = \int\limits_{M_d^-}  \kappa(A)^\alpha \;\pi(dA) < \infty.
%\end{align*}
\end{proof}

\begin{remark}\label{rem:kappabounded}
Requiring $f_\delta \in \mathbb{L}^\alpha$, the first condition
\[\int\limits_{M_d^-} \frac{\kappa(A)^\alpha}{\rho (A)} \;\pi (dA) < \infty\]
is already needed for the regular variation condition $f \in \mathbb{L}^\alpha$ of Corollary \ref{cor:pp:regvarsupOU}. The additional condition $\int_{M_d^-} \kappa(A)^\alpha \;\pi (dA) < \infty$ is of importance if $\rho (A)$ decays very fast. In that case
\[\frac{\kappa (A)^\alpha}{\rho (A)} \ll \kappa (A)^\alpha\]
for values of $A \in M_d^-$ with high norm and therefore, the stronger integrability condition is needed. If $\kappa$ is bounded, then so is the mixed moving average kernel function and $f_\delta$ is also bounded. Moreover, the condition
\[\int\limits_{M_d^-} \kappa(A)^\alpha\;\pi (dA) < \infty\]
is true for all $\alpha > 0$ and for all probability measures $\pi$.
\end{remark}

%Theorem \ref{thm:funcregvarmma} also requires the function $f_d (A,s) = \sup_{t \in [0,1]}\; \| \frac{d}{d t} f(A, t - s) \|\; \id_{[0,1]^c} (s)$ to satisfy $f_d \in \mathbb{L}^2 \cap \mathbb{L}^{\alpha + \xi}$ for some $\xi > 0$. Thus it is useful to derive general conditions for $f_d \in \mathbb{L}^\beta$ for any $\beta > 0$.
%
%\begin{proposition}\label{prop:fdsupOU}
%Let $f(A,s) = e^{As} \id_{[0,\infty)} (s)$ be the kernel function of a supOU process satisfying the conditions of Theorem \ref{thm:pp:supOUex} and let $f_d$ be given by \eqref{eqn:fd}. If for some $\beta > 0$ 
%\[\int\limits_{M_d^-} \frac{\;\| A\|^\beta\;\kappa(A)^\beta}{\rho (A)} \;\pi (dA) < \infty,\]
%then $f_d \in \mathbb{L}^\beta$.
%\end{proposition}
%
%\begin{proof}
%Making use of the special structure of the supOU kernel function, we obtain
%\begin{align*}%\label{eqn:fddecompsupOU}
%f_d (A,s) &= \sup_{t \in [0,1]}\; \Big\| A\, e^{A(t - s)} \Big\|\; \id_{(-\infty,0]} (s)\\
%&\leq \|A\|\; \kappa (A)\;\sup_{t \in [0,1]}\; e^{-\rho (A) (t - s)} \; \id_{(-\infty,0]} (s)\\
%&\leq \|A\|\; \kappa (A)\; e^{\rho (A) s} \; \id_{(-\infty,0]} (s).
%\end{align*}
%and hence
%\begin{align*}
%\int\limits_{M_d^-} \int\limits_{\real} f_d (A,s)^\beta \;ds\; \pi(dA) &\leq \int\limits_{M_d^-} \int\limits_{-\infty}^0 \|A\|^\beta\; \kappa (A)^\beta\; e^{\beta \rho (A) s} \;ds\; \pi(dA)\\
%&= \frac{1}{\beta}\; \int\limits_{M_d^-} \frac{\;\| A\|^\beta\;\kappa(A)^\beta}{\rho (A)} \; \pi(dA)\\
%&< \infty.
%\end{align*}
%\end{proof}

Now we can use Proposition \ref{prop:llsupOU} to obtain conditions for functional regular variation of supOU processes with sample paths in $\dd$. Therefore, we restrict the time interval to $t \in [0,1]$ and assume the supOU process to have c\`adl\`ag sample paths, see Section \ref{sec:dd:sample} and \cite{barndorff:stelzer:2011a}, Theorem 3.12, for details on the sample path behavior of supOU processes.

\begin{theorem}\label{thm:funcregvarsupOU}
Let $\Lambda$ and $\Lambda_2$ be $\reald$-valued L\'evy bases on $M_d^- \times \real$ with generating quadruples $(\gamma, \Sigma, \nu, \pi)$ and $(0, 0, \nu |_{B_1(0)^c}, \pi)$ respectively such that $\nu \in \rvan[\mu_\nu]$. Assume that the supOU process $(X_t)$ given by $X_t = \int_{M_d^-}\int_{-\infty}^t e^{A(t-s)} \Lambda (dA, ds)$ exists for $t \in [0,1]$ (in the sense of Theorem \ref{thm:pp:supOUex}) and that the processes $X_t$ and $X_t^{(2)} = \int_{M_d^-}\int_{\real} f(A,t-s) \Lambda_2 (dA, ds)$ have c\`adl\`ag sample paths. Furthermore, suppose that $\kappa$ is bounded and that 
%the regular variation conditions of Corollary \ref{cor:pp:regvarsupOU} 
\[\int\limits_{M_d^-} \frac{\kappa(A)^{\alpha}}{\rho (A)}\; \pi (d A) < \infty.\]
If $f(A,s) = e^{As} \id_{[0,\infty)} (s)$ and the function $f_\delta$ given by \eqref{eqn:fdelta} satisfy condition \eqref{eqn:xt2ex},
%one of the three existence conditions of Theorem \ref{thm:dd:MMAex2} and additionally
%\[\int\limits_{M_d^-} \kappa(A)^\alpha \;\pi (dA) < \infty\]
%as well as
%\[\int\limits_{M_d^-} \frac{\;\| A\|^2\;\kappa(A)^2}{\rho (A)} \;\pi (dA) < \infty \quad \mbox{ and } \quad \int\limits_{M_d^-} \frac{\;\| A\|^{\alpha + \xi}\;\kappa(A)^{\alpha + \xi}}{\rho (A)} \;\pi (dA) < \infty\]
%for some $\xi > 0$, 
then 
\[(X_t)_{t \in [0,1]} \in \rvdd ,\]
where $\mu$ is uniquely determined by the measures $\mu_{t_1, \ldots, t_k}$ in Corollary \ref{cor:pp:regvarsupOU}.
\end{theorem}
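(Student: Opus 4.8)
The plan is to obtain Theorem \ref{thm:funcregvarsupOU} as a direct specialization of Theorem \ref{thm:funcregvarmma}, so the whole task reduces to verifying that the hypotheses of the latter hold for the supOU kernel $f(A,s) = e^{As}\id_{[0,\infty)}(s)$. Most of these are immediate. Since $\kappa$ is assumed bounded, say by $M$, the decay estimate $\|e^{As}\| \leq \kappa(A)e^{-\rho(A)s}$ from Theorem \ref{thm:pp:supOUex} gives $\|f(A,s)\| \leq \kappa(A) \leq M$ for $s \geq 0$ and $f(A,s) = 0$ for $s < 0$, so $f$ is bounded. The requirement $f \in \mathbb{L}^\alpha(\lambda \times \pi)$ follows from the assumed $\int_{M_d^-} \frac{\kappa(A)^\alpha}{\rho(A)}\,\pi(dA) < \infty$ via the computation $\int_0^\infty \|e^{As}\|^\alpha\,ds \leq \frac{\kappa(A)^\alpha}{\alpha\rho(A)}$ underlying Corollary \ref{cor:pp:regvarsupOU}. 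The non-degeneracy condition, that $\mu_\nu(f^{-1}(A,s)(\reald\setminus\{0\})) = 0$ fail on a set of positive $\pi\times\lambda$ measure, holds because $e^{As}$ is invertible: for every $s > 0$ the preimage equals $\reald\setminus\{0\}$, on which the nonzero limit measure $\mu_\nu$ is positive. Existence of $(X_t)$ on $[0,1]$ in the sense of Theorem \ref{thm:pp:supOUex} entails existence in the sense of Theorem \ref{thm:dd:MMAex}, the c\`adl\`ag property of $X_t$ and $X_t^{(2)}$ is assumed, and condition \eqref{eqn:xt2ex} is imposed directly on both $f$ and $f_\delta$ in the statement.

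The only substantive point is therefore the oscillation condition \eqref{eqn:funcregvarmmabed}, namely $\int_{M_d^-}\int_\real f_\delta(A,s)^\alpha\,ds\,\pi(dA) \to 0$ as $\delta \to 0$. First I would apply Proposition \ref{prop:llsupOU} to obtain $f_\delta \in \mathbb{L}^\alpha$. Its two hypotheses are precisely the assumed $\int_{M_d^-} \frac{\kappa(A)^\alpha}{\rho(A)}\,\pi(dA) < \infty$ together with $\int_{M_d^-}\kappa(A)^\alpha\,\pi(dA) < \infty$; the latter is automatic here, since $\kappa$ is bounded and $\pi$ is a probability measure, so that $\int_{M_d^-}\kappa(A)^\alpha\,\pi(dA) \leq M^\alpha < \infty$ (cf.\ Remark \ref{rem:kappabounded}).

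With $f_\delta \in \mathbb{L}^\alpha$ in hand, Lemma \ref{lemma:kvgzpunkwint} reduces \eqref{eqn:funcregvarmmabed} to the pointwise statement $f_\delta(A,s) \to 0$ as $\delta \to 0$ for $\pi\times\lambda$ almost every $(A,s)$, which by the remark following that lemma is equivalent to $f(A,s)$ being continuous in $s$ for every $s \in \real\setminus\{0\}$. For the supOU kernel this is clear: $s \mapsto e^{As}$ is smooth on $(0,\infty)$, the kernel vanishes on $(-\infty,0)$, and the indicator $\id_{(t_1,t_2]^c}$ in the definition \eqref{eqn:fdelta} of $f_\delta$ removes the single jump at $s = 0$. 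Hence \eqref{eqn:funcregvarmmabed} holds.

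Having checked every hypothesis, Theorem \ref{thm:funcregvarmma} yields $(X_t)_{t\in[0,1]} \in \rvdd$, with limiting measure $\mu$ determined by the finite-dimensional measures $\mu_{t_1,\ldots,t_k}$ of Corollary \ref{cor:pp:regvarsupOU}. I do not expect a genuine obstacle here: the real content has been front-loaded into Proposition \ref{prop:llsupOU} and Lemma \ref{lemma:kvgzpunkwint}, and what remains is the bookkeeping above. The two places deserving mild care are the passage between the two notions of existence (Theorem \ref{thm:pp:supOUex} versus Theorem \ref{thm:dd:MMAex}) and the observation that boundedness of $\kappa$ is what simultaneously delivers boundedness of $f$ and the auxiliary integrability $\int_{M_d^-}\kappa(A)^\alpha\,\pi(dA) < \infty$ needed by Proposition \ref{prop:llsupOU}.
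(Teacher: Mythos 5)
Your proposal is correct and follows essentially the same route as the paper: reduce to Theorem \ref{thm:funcregvarmma}, get boundedness of $f$ from boundedness of $\kappa$, obtain $f_\delta \in \mathbb{L}^\alpha$ from Proposition \ref{prop:llsupOU} together with Remark \ref{rem:kappabounded}, and then invoke Lemma \ref{lemma:kvgzpunkwint} to reduce condition \eqref{eqn:funcregvarmmabed} to pointwise convergence of $f_\delta$. The only (harmless) difference is at the final step, where the paper verifies $f_\delta(A,s) \to 0$ by an explicit matrix-exponential bound $f_\delta(A,s) \le (e^{\|A\|\delta}-1)e^{\|A\|(1-\delta-s)}\id_{(-\infty,1-\delta]}(s)$, whereas you appeal to continuity of $s \mapsto e^{As}$ via the remark following Lemma \ref{lemma:kvgzpunkwint}.
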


%\begin{proof}
%Applying Theorem \ref{thm:funcregvarmma}, it is left to show that, as $\delta \to 0$, 
%\[\int\limits_{M_d^-} \int\limits_{\real} f_\delta (A,s)^\alpha\, d s \pi (d A) \to 0.\]
%The assumptions together with Proposition \ref{prop:llsupOU} yield $f_\delta \in \mathbb{L}^\alpha$ and thus by Lemma \ref{lemma:kvgzpunkwint} it is sufficient to show that $\;\lim_{\delta \to 0} \; f_\delta (A,s) \to 0$ for $\pi \times \lambda$ almost every $(A,s)$. Therefore, we use the decomposition \eqref{eqn:fdeltadecompsupOU} again. The second and third summand of \eqref{eqn:fdeltadecompsupOU} vanish for $s \in (-\infty, 0]$ and also for $s\in (0, \infty)$, if we choose $\delta < s$. For the first summand of \eqref{eqn:fdeltadecompsupOU} we assume w.l.o.g. $t_1 > t_2$ and calculate
%\begin{align*}
%\big\| e^{A (t_1 - s)} - e^{A (t_2 - s)}\big\| &= \bigg\| \sum\limits_{k=0}^{\infty} \frac{A^k ((t_1 - s)^k - (t_2 - s)^k)}{k !}\bigg\| \leq \sum\limits_{k=0}^{\infty} \frac{\|A\|^k ((t_1 - s)^k - (t_2 - s)^k)}{k !}\\
%&= e^{\|A\| (t_1 - s)} - e^{\|A\| (t_2 - s)}.
%\end{align*}
%This yields
%\begin{align*}
%\sup\limits_{t_1, t_2 \in [0, \delta)} \big\| e^{A (t_1 - s)} - e^{A (t_2 - s)}\big\|\;\id_{(-\infty, 0]} (s) &\leq \sup\limits_{t_1, t_2 \in [0, \delta)} \left( e^{\|A\| (t_1 - s)} - e^{\|A\| (t_2 - s)}\right) \;\id_{(-\infty, 0]} (s)\\
%&= \left( 1 - e^{\|A\| \delta}\right) e^{- \|A\| s} \;\id_{(-\infty, 0]} (s)
%\end{align*}
%and by the continuity of the exponential this term converges to $0$ as $\delta \to 0$ for every $(A,s) \in \borel (M_d^- \times \real)$.
%\end{proof}

\begin{proof}
Applying Theorem \ref{thm:funcregvarmma}, we start by observing that the supOU kernel function
\[f(A,s) = e^{As} \id_{[0,\infty)} (s) \leq \kappa (A)\]
is bounded if $\kappa$ is.
%Furthermore, 
%\begin{align*}
%\int\limits_{M_d^-} \int\limits_{\real} f (A,s)^2\;ds\; \pi(dA) &\leq \int\limits_{M_d^-} \int\limits_{-\infty}^t \; \kappa (A)^2\; e^{2 \rho (A) (t - s)} \;ds\; \pi(dA)\\
%&= \frac{1}{2}\;\int\limits_{M_d^-} \; \frac{\kappa (A)^2}{\rho (A)} \; \pi(dA)\\
%&< \infty,
%\end{align*}
%and thus $f \in \mathbb{L}^2$, since the finiteness follows from the existence condition of Theorem \ref{thm:pp:supOUex}. 
Next we show that, as $\delta \to 0$, 
\[\int\limits_{M_d^-} \int\limits_{\real} f_\delta (A,s)^\alpha\, d s \pi (d A) \to 0.\]
The assumptions together with Proposition \ref{prop:llsupOU} and Remark \ref{rem:kappabounded} yield $f_\delta \in \mathbb{L}^\alpha$ and thus by Lemma \ref{lemma:kvgzpunkwint} it is sufficient to show that $\;\lim_{\delta \to 0} \; f_\delta (A,s) \to 0$ for $\pi \times \lambda$ almost every $(A,s)$. When considering differences of matrix exponentials, we can use the inequality
\begin{align*}
\big\| e^{A (t_2 - s)} - e^{A (t_1 - s)}\big\| &= \bigg\| \sum\limits_{k=0}^{\infty} \frac{A^k ((t_2 - s)^k - (t_1 - s)^k)}{k !}\bigg\| \leq \sum\limits_{k=0}^{\infty} \frac{\|A\|^k ((t_2 - s)^k - (t_1 - s)^k)}{k !}\\
&= e^{\|A\| (t_2 - s)} - e^{\|A\| (t_1 - s)}
\end{align*}
if $t_2 > t_1$. This yields
\begin{align*}
f_\delta (A,s) &=\sup\limits_{t_1 \leq t_2;\; t_2 - t_1 \leq \delta}\; \big\| e^{A (t_2 - s)} - e^{A (t_1 - s)}\big\|\;\id_{(-\infty, t_1]} (s)\\
& \leq \sup\limits_{x \in [0, 1 - \delta]}\;\sup\limits_{t_2, t_1 \in [x, x + \delta];\; t_1 \leq t_2} \; \left( e^{\|A\| (t_2 - s)} - e^{\|A\| (t_1 - s)}\right) \;\id_{(-\infty, t_1]} (s)\\
&\leq \sup\limits_{x \in [0, 1 - \delta]}\; \left( e^{\|A\| (x + \delta - s)} - e^{\|A\| (x - s)}\right) \;\id_{(-\infty, t_1]} (s)\\
& \leq \sup\limits_{x \in [0, 1 - \delta]}\; \left( e^{\|A\| \delta} - 1\right) e^{\|A\| (x  - s)} \;\id_{(-\infty, t_1]} (s)\\
& \leq \left( e^{\|A\| \delta} - 1\right) e^{\|A\| (1 - \delta - s)} \;\id_{(-\infty, 1 - \delta]} (s)
\end{align*}
and by the continuity of the exponential this term converges to $0$ as $\delta \to 0$ for every $(A,s) \in \borel (M_d^- \times \real)$.
\end{proof}

Conditions for $f$ and $f_\delta$ to satisfy the existence condition \eqref{eqn:xt2ex}, i.e.
\[\int \limits_{M_d^-} \int \limits_{\real} \int \limits_{\|x\| > 1} (1 \wedge \| f(A, s) x\|) \,\nu (dx)\,ds\,\pi (dA) < \infty\]
can be obtained by combining Proposition \ref{prop:llsupOU} with Lemma \ref{lemma:charxt2ex}.

\begin{corollary}\label{lemma:charxt2exsupOU}
Let $f(A,s) = e^{As} \id_{[0,\infty)} (s)$ be the kernel function of a supOU process satisfying the conditions of Theorem \ref{thm:pp:supOUex} and let $f_\delta$ be given by \eqref{eqn:fdelta}. Then $f$ and $f_\delta$ satisfy condition \eqref{eqn:xt2ex} if one of the following two conditions are satisfied:
\begin{enumerate}
	\item[(i)] $\alpha > 1$ as well as
		\[\int\limits_{M_d^-} \frac{\kappa(A)}{\rho (A)} \;\pi (dA) < \infty \quad \mbox{ and } \int\limits_{M_d^-} \kappa(A) \;\pi (dA) < \infty.\]
	\item[(ii)] $\alpha \leq 1$ and there exists $\varepsilon \in (0,\alpha)$ such that
		\[\int\limits_{M_d^-} \frac{\kappa(A)^{\alpha - \varepsilon}}{\rho (A)} \;\pi (dA) < \infty \quad \mbox{ and } \int\limits_{M_d^-} \kappa(A)^{\alpha - \varepsilon} \;\pi (dA) < \infty.\]
\end{enumerate}
\end{corollary}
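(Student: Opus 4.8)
The plan is to reduce everything to the two sufficient conditions of Lemma \ref{lemma:charxt2ex}, which guarantee that a measurable function lies in the regime where \eqref{eqn:xt2ex} holds: it suffices to verify $f \in \mathbb{L}^1$ (resp.\ $f \in \mathbb{L}^{\alpha - \varepsilon}$) together with the same membership for $f_\delta$, and then invoke part (i) (resp.\ part (ii)) of that lemma applied separately to $f$ and to $f_\delta$. The key observation is that Proposition \ref{prop:llsupOU} is tailor-made to deliver $f_\delta \in \mathbb{L}^\beta$ for \emph{any} exponent $\beta > 0$: its parameter is free and need not coincide with the regular-variation index $\alpha$, so I would apply it once with $\beta = 1$ and once with $\beta = \alpha - \varepsilon$. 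The membership of $f$ itself is not supplied by that proposition, so I would treat it by a one-line estimate using the bound $\|e^{As}\| \leq \kappa(A) e^{-\rho(A) s}$ furnished by Theorem \ref{thm:pp:supOUex}.

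For part (i), assume $\alpha > 1$. First I would compute, using the indicator in the supOU kernel and the exponential bound,
\[\int_{M_d^-}\int_\real \|f(A,s)\|\,ds\,\pi(dA) = \int_{M_d^-}\int_0^\infty \|e^{As}\|\,ds\,\pi(dA) \leq \int_{M_d^-} \frac{\kappa(A)}{\rho(A)}\,\pi(dA) < \infty,\]
so that $f \in \mathbb{L}^1$. Next, applying Proposition \ref{prop:llsupOU} with exponent $1$—whose two hypotheses $\int_{M_d^-} \kappa(A)/\rho(A)\,\pi(dA) < \infty$ and $\int_{M_d^-}\kappa(A)\,\pi(dA) < \infty$ are exactly the assumptions of (i)—yields $f_\delta \in \mathbb{L}^1$. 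Since $\alpha > 1$, Lemma \ref{lemma:charxt2ex}(i) applied in turn to $f$ and to $f_\delta$ then shows that both satisfy \eqref{eqn:xt2ex}.

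For part (ii), assume $\alpha \leq 1$ and fix the $\varepsilon \in (0,\alpha)$ from the hypothesis. The same exponential bound gives
\[\int_{M_d^-}\int_0^\infty \|e^{As}\|^{\alpha-\varepsilon}\,ds\,\pi(dA) \leq \frac{1}{\alpha - \varepsilon}\int_{M_d^-}\frac{\kappa(A)^{\alpha-\varepsilon}}{\rho(A)}\,\pi(dA) < \infty,\]
so $f \in \mathbb{L}^{\alpha-\varepsilon}$, while Proposition \ref{prop:llsupOU} with exponent $\alpha - \varepsilon$ (its two hypotheses being precisely the integrability conditions listed in (ii)) gives $f_\delta \in \mathbb{L}^{\alpha - \varepsilon}$. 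Lemma \ref{lemma:charxt2ex}(ii) then yields \eqref{eqn:xt2ex} for both functions.

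I do not expect a genuine obstacle here; the argument is essentially bookkeeping that stitches together the two earlier results. The one point requiring care is the notational collision between the regular-variation index $\alpha$ in the statement and the free $\mathbb{L}$-exponent in Proposition \ref{prop:llsupOU}: one must resist setting that exponent equal to $\alpha$ and instead feed in $1$ or $\alpha - \varepsilon$ according to which branch of Lemma \ref{lemma:charxt2ex} is being used. A secondary detail is that $f_\delta$ is scalar-valued, so for it \eqref{eqn:xt2ex} reads $\int_{M_d^-}\int_\real\int_{\|x\|>1}(1 \wedge f_\delta(A,s)\|x\|)\,\nu(dx)\,ds\,\pi(dA)$, which is still covered by Lemma \ref{lemma:charxt2ex} upon reading $\|f_\delta(A,s)\| = f_\delta(A,s) \geq 0$.
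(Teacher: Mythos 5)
Your argument is correct and is exactly the route the paper intends: the corollary is stated without a written proof, preceded only by the remark that it ``can be obtained by combining Proposition \ref{prop:llsupOU} with Lemma \ref{lemma:charxt2ex}'', and your proposal supplies precisely that combination (Proposition \ref{prop:llsupOU} with free exponent $1$ resp.\ $\alpha-\varepsilon$ for $f_\delta$, the direct bound $\|e^{As}\|\le\kappa(A)e^{-\rho(A)s}$ for $f$, then Lemma \ref{lemma:charxt2ex}(i) resp.\ (ii)). Your cautionary remarks about the reused symbol $\alpha$ and the scalar-valued $f_\delta$ are both apt.
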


In correspondence with Remark \ref{rem:kappabounded} we mention that for all $\alpha > 0$, $\varepsilon \in (0,\alpha)$ and for all probability measures $\pi$ the conditions 
\[\int\limits_{M_d^-} \kappa(A) \;\pi (dA) < \infty \quad \mbox{ and } \int\limits_{M_d^-} \kappa(A)^{\alpha - \varepsilon} \;\pi (dA) < \infty\]
are redundant if $\kappa$ is bounded.

\section{Point Process Convergence}\label{sec:dd:pointprocess}

In this section we discuss the use of the results of the previous two sections in combination with point process results for stochastic processes with sample paths in $\dd$. Therefore, let $M_p (\ddcon)$ denote the space of all point measures on $\ddcon$ equipped with the $\what$-topology and let $\varepsilon_x$ be the Dirac measure at the point $x$. Furthermore, let $X_i$, $i \in \nat$, be a sequence of iid copies of a regularly varying stochastic process $X \in \rvdd$ with values in $\dd$.\par
We start by stating the main result that links regular variation of $X$ to weak convergence of the point processes
\[N_n = \sum\limits_{i = 1}^n \varepsilon_{a_n^{-1} X_i}, \quad n \in \nat.\]
The following theorem is the extension of the classical result of Proposition 3.21 in \cite{resnick:1987} to a state space which is not locally compact. Similar results have also been proved by \cite{dehaan:lin:2001}, Theorem 2.4, in the case of real-valued processes which are regularly varying with index $1$ and by \cite{davis:mikosch:2008} for $\dd$-valued random fields.

\begin{theorem}\label{thm:dd:mainpp}
Let $(X_i)_{i \in \nat}$ be an iid sequence of stochastic processes with values in $\dd$. Then $X_1 \in \rvdd$ if and only if $N_n \gegend N$ in $M_p (\ddcon)$, where $N$ is a Poisson random measure with mean measure $\mu$ (short PRM($\mu$)).
\end{theorem}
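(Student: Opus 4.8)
The plan is to reduce the statement to the general theory of point processes on a complete separable metric space, using the fact that, by Definition~\ref{def:dd:regvarD}, the property $X_1\in\rvdd$ is \emph{precisely} the $\what$-convergence of the mean measures $n\,P(a_n^{-1}X_1\in\cdot)\tow\mu$ on $\borel(\ddcon)$. Since $\ddcon$ is complete and separable and $\mu$ is boundedly finite, $N=\PRM(\mu)$ exists, and weak convergence in $M_p(\ddcon)$ can be characterised through Laplace functionals as in \cite{daley:verejones:1988} and \cite{kallenberg:1983}; this substitutes for the vague-convergence argument of Proposition 3.21 in \cite{resnick:1987}, which does not apply because $\ddcon$ fails to be locally compact.

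For the implication $X_1\in\rvdd\Rightarrow N_n\gegend N$ I would argue via Laplace functionals. For a bounded continuous $g:\ddcon\to[0,\infty)$ that vanishes outside a bounded set, the iid structure of $N_n=\sum_{i=1}^n\vep_{a_n^{-1}X_i}$ gives
\[
L_{N_n}(g)=\ev\Big(e^{-\int g\,dN_n}\Big)=\Big(\ev\,e^{-g(a_n^{-1}X_1)}\Big)^n=\Big(1-\tfrac{c_n}{n}\Big)^n,\qquad c_n:=n\,\ev\big(1-e^{-g(a_n^{-1}X_1)}\big).
\]
Since $1-e^{-g}$ is bounded, continuous and vanishes outside a bounded set, the assumed $\what$-convergence yields
\[
c_n=\int_{\ddcon}\big(1-e^{-g}\big)\,d\big(n\,P(a_n^{-1}X_1\in\cdot)\big)\;\longrightarrow\;\int_{\ddcon}\big(1-e^{-g}\big)\,d\mu,
\]
whence $L_{N_n}(g)\to\exp\!\big(-\!\int_{\ddcon}(1-e^{-g})\,d\mu\big)$, the Laplace functional of $\PRM(\mu)$. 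As this holds for every $g$ in the determining class of bounded continuous functions with bounded support, the continuity theorem for Laplace functionals on $M_p(\ddcon)$ delivers $N_n\gegend N$.

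For the converse $N_n\gegend N\Rightarrow X_1\in\rvdd$ I would recover the mean-measure convergence from avoidance probabilities. Fixing a bounded Borel set $B\subseteq\ddcon$ that is a $\mu$-continuity set (so $m\mapsto m(B)$ is a.s.\ $N$-continuous) gives $N_n(B)\gegend N(B)$ and in particular
\[
P\big(N_n(B)=0\big)=\big(1-P(a_n^{-1}X_1\in B)\big)^n\;\longrightarrow\;P\big(N(B)=0\big)=e^{-\mu(B)}.
\]
A bounded set in $\ddcon$ is bounded away from the deleted origin, so $B\subseteq\{x:\|x\|_\infty\ge\vep\}$ for some $\vep>0$ and $P(a_n^{-1}X_1\in B)\le P(\|X_1\|_\infty\ge\vep a_n)\to0$ as $a_n\nearrow\infty$; taking logarithms then forces $n\,P(a_n^{-1}X_1\in B)\to\mu(B)$. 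Since this holds on all bounded $\mu$-continuity sets we obtain $n\,P(a_n^{-1}X_1\in\cdot)\tow\mu$, while $\mu(\ddcon\wo\dd)=0$ is inherited from the hypothesis that $N$ is $\PRM(\mu)$ with $\mu$ living on $\dd$; hence $X_1\in\rvdd$.

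The main obstacle is infrastructural rather than computational. Because $\ddcon$ is not locally compact, the classical point-process theorems cannot be quoted verbatim, and the real work is to confirm that on a general complete separable metric space with the $\what$-topology the Laplace-functional continuity theorem still applies, that bounded continuous functions with bounded support form a determining class (so that no separate tightness step is needed), and that $N_n(B)\gegend N(B)$ genuinely holds on $\mu$-continuity sets. Once these facts from \cite{daley:verejones:1988} and \cite{kallenberg:1983} are in place, the Laplace-functional algebra and the avoidance-probability computation above are routine.
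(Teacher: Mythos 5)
Your proposal is correct and follows essentially the same route as the paper, which simply invokes the Laplace-functional proof of Proposition 3.21 in \cite{resnick:1987} with vague convergence replaced by $\what$-convergence on the non-locally-compact space $\ddcon$ (citing \cite{davis:mikosch:2008} for the fact that this substitution is harmless). Your only deviation is cosmetic: you prove the converse via avoidance probabilities on bounded $\mu$-continuity sets rather than by inverting the Laplace-functional identity, and you correctly flag the one genuine issue, namely verifying that the Laplace-functional continuity theorem and the determining class of bounded continuous functions with bounded support carry over to the $\what$-topology, which is exactly the point the paper delegates to \cite{daley:verejones:1988} and \cite{kallenberg:1983}.
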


\begin{proof}
The proof can be obtained by changing from vague-topology to the $\what$-topology in the proof of Proposition 3.21 in \cite{resnick:1987}. This change of topology does not affect the proof which is based on the Laplace functionals of the point processes involved (cf.\ \cite{davis:mikosch:2008}, Proof of Lemma 2.2).
\end{proof}

This result can now be combined with the results of Sections \ref{sec:dd:funcregvar} and \ref{sec:dd:supOU} to obtain functional point process convergence for MMA and supOU processes. Point processes of that kind include full information of the complete paths of the process $X$. In combination with the continuous mapping theorem (cf.\ \cite{daley:verejones:1988}, Proposition A2.3.V) this is an extremely powerful tool to analyze the extremal behavior of MMA and supOU processes. Using such methods, one gets a better understanding of the structure of the extreme values and their properties, e.g.\ the extremal clustering behavior or long memory effects.\par

In contrast to finite-dimensional point process results, functional point process convergence does not only allow to analyze, for example, the behavior of maxima at fixed time points, but also of functionals acting on the paths of the process in compact time intervals. Examples of such functionals are the subadditive functionals (e.g.\ suprema) studied by \cite{rosinski:samorodnitsky:1993} for a subexponential, by \cite{braverman:samorodnitsky:1995} for an exponential, and by \cite{braverman:mikosch:samorodnitsky:2002} for a univariate regularly varying setting. Moreover, since point processes of suprema do not incorporate the directions of the extremes, it is also possible to include the directions into the analyzed point processes. Regarding the above issues introductions to the use of point processes in extreme value theory can be found in \cite{embrechts:klueppelberg:mikosch:1997}, \cite{resnick:1987}, \cite{resnick:2007}, \cite{leadbetter:lindgren:rootzen:1983} and \cite{dehaan:ferreira:2006} and for the exemplary use of functional point processes, see \cite{dehaan:lin:2001} and \cite{davis:mikosch:2008}.

A thorough investigation of all these issues for mixed moving average processes is beyond the scope of the present paper and the topic of future research.\par

\end{document}